\newcommand\C{\mathbb{C}}
\newcommand\Z{\mathbb{Z}}
\newcommand\A{\mathbb{A}}
\newcommand\m{\mathfrak{m}}
\renewcommand\P{\mathbb{P}}
\newcommand{\OO}{\mathcal{O}}
\newcommand{\Hilb}{\operatorname{Hilb}}
\newcommand{\X}{\mathcal{X}}
\newcommand{\Y}{\mathcal{Y}}
\newcommand{\B}{\mathcal{B}}
\newcommand{\F}{\mathscr{F}}
\newcommand{\dF}{\mathcal{F}}
\newcommand{\G}{\mathscr{G}}
\newcommand{\dG}{\mathcal{G}}
\newcommand\E{\mathcal{E}}
\renewcommand\v{\raise0.9ex\hbox{$\scriptscriptstyle\vee$}}
\DeclareMathOperator{\Hom}{Hom}
\DeclareMathOperator{\Spec}{Spec}
\DeclareMathOperator{\Ex}{Ex}
\DeclareMathOperator{\Der}{Der}
\DeclareMathOperator{\Ext}{Ext}
\DeclareMathOperator{\Aut}{Aut}
\DeclareMathOperator{\Sing}{Sing}
\DeclareMathOperator{\codim}{codim}
\DeclareMathOperator{\rank}{rank}
\DeclareMathOperator{\im}{Im}
\DeclareMathOperator{\Fol}{Fol}
\DeclareMathOperator{\Sym}{Sym}
\DeclareMathOperator{\Drap}{Drap}
\DeclareMathOperator{\Quot}{Quot}
\DeclareMathOperator{\Unf}{Unf}
\DeclareMathOperator{\Bl}{Bl}
\DeclareMathOperator{\Tang}{Tang}
\DeclareFontFamily{OT1}{pzc}{}
\DeclareFontShape{OT1}{pzc}{m}{it}{<-> s * [1.100] pzcmi7t}{}
\DeclareMathAlphabet{\mathpzc}{OT1}{pzc}{m}{it}
\DeclareMathOperator{\sHom}{\mathpzc{Hom}}
\DeclareMathOperator{\fDrap}{\mathpzc{Drap}}
\DeclareMathOperator{\fQuot}{\mathpzc{Quot}}
\newtheorem{theorem}{Theorem}[section]
\newtheorem{proposition}[theorem]{Proposition}
\newtheorem{lemma}[theorem]{Lemma}
\newtheorem{corollary}[theorem]{Corollary}
\theoremstyle{definition}
\newtheorem{definition}[theorem]{Definition}
 \newtheorem{remark}[theorem]{Remark}
\newtheorem{example}[theorem]{Example}
\newtheoremstyle{named}{}{}{\itshape}{}{\bfseries}{.}{.5em}{\thmnote{#3}}
\theoremstyle{named}
\title{Deformations of pullback foliations}
\author{Pablo Perrella}
\date{}
\address{Departamento de Matem\'atica 
, Facultad de Ciencias Exactas y Naturales, Universidad de Buenos Aires 
and IMAS (UBA-CONICET) 
, Buenos Aires, Argentina.}
\email{pperrella@dm.uba.ar} 
\keywords{Holomorphic Foliations, Deformation Theory, Unfoldings}
\subjclass[2020]{14D15, 32S65, 37F75, 32M25} 
\begin{document}

\maketitle

\begin{abstract}
We study the stability of pullback foliations under morphisms and rational maps via Grothendieck's Drapeaux scheme. In the local setting, a foliated version of Schlessinger's Theorem on rigidity of conical singularities was achieved. We apply these techniques to provide a criterion for the stability of algebraic leaves of a foliation. 
\end{abstract}

\section*{Introduction}

One of the central topics of study in deformation theory is the stability property of a family of geometric objects. Stable families are those that contain all sufficiently small perturbations of their members. In this article, we address the problem of stability for pullback foliations  on normal algebraic varieties defined over $\C$. 
Informally speaking, given a dominant rational map $\pi: X\dashrightarrow Y$ and a foliation $\G$ on $Y$, the pullback $\F=\pi^\ast\G$ is the foliation whose leaves are the pre-images of the leaves of $\G$. As a particularly interesting special case, if $\G$ is a foliation by points, the fibers of the rational map are the leaves of the pullback foliation.

%\textcolor{red}{When the map $\pi$ is a morphism, it is easier to determine when a given deformation (see Definitions~\ref{def:family} and~\ref{def:def})  of $\F$ over a variety $\X$ is stable.}

Previous research has addressed the case where the map $\pi$ is a morphism. Girbau, Haeflieger and Sundararaman proved that certain foliations given by fibrations are stable, within the framework of transversely holomorphic deformations of smooth foliations \cite[Thm 4.3.1]{girbau1983deformations}. In the singular setting, Gomez-Mont and Lins Neto proved the $C^\infty$-stability of a class of foliations given by the fibers of a Morse function $\pi: X\rightarrow C$ to  a complex curve $C$~\cite[Thm 1.5]{gomez1991structural}. In both cases, the topological condition $H^1(F,\C)= 0$ is imposed on the fibers of the morphism under consideration. In this direction, we were able to establish the following result, which we prove in Section~\ref{ssec:proofT01}.

\begin{theorem}{\label{stability_morphisms}}
Let $\pi: X\rightarrow Y$ be a proper, surjective morphism with connected fibers between normal varieties whose generic fiber is smooth and has no non-zero global holomorphic $1$-forms. Suppose that
\[
0\rightarrow T_{\dF/S}\rightarrow T_{\X/S}\rightarrow N_{\dF/S}\rightarrow 0
\]
is a deformation of a pullback foliation $\F = \pi^\ast\G$ over a smooth base $S$. If the family $\X$ is projective over $S$ and $\Pi: \X\rightarrow \Y$ is a deformation of $\pi$ over $S$, then there exists an étale neighborhood $U\rightarrow S$ and a deformation $\dG/U$ of $\G$ such that $\dF/U = \Pi^\ast(\dG/U)$.
\end{theorem}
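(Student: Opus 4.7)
The plan is to construct $\dG$ by descending, along $\Pi$, a natural subsheaf of $\Pi^* T_{\Y/S}$ built from $T_{\dF/S}$ and the relative differential $d\Pi : T_{\X/S} \to \Pi^* T_{\Y/S}$. Concretely, set
\[
\mathcal{H} := \operatorname{im}\bigl(T_{\dF/S} \xrightarrow{d\Pi} \Pi^* T_{\Y/S}\bigr).
\]
On the central fiber $X$ one has $T_{X/Y} \subset T_\F$ and $d\pi(T_\F) = \pi^*T_\G$ since $\F = \pi^*\G$, so $\mathcal{H}|_X = \pi^*T_\G$. I would then declare $T_{\dG/S} := \Pi_*\mathcal{H}$, viewed as a subsheaf of $T_{\Y/S}$ through the isomorphism $\Pi_*\Pi^* T_{\Y/S} \simeq T_{\Y/S}$, which follows from the projection formula together with $\Pi_*\OO_\X = \OO_\Y$; the latter is guaranteed, perhaps after shrinking $S$ to an étale neighborhood $U$, by properness, connectedness of fibers, and normality, via a relative Stein factorization.

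To see that $\Pi^* T_{\dG/S} = \mathcal{H}$, which is the key descent step, I would exploit the fibral hypothesis. Along a smooth fiber $F$, the inclusion $\mathcal{H}|_F \hookrightarrow \Pi^* T_{\Y/S}|_F \simeq T_{\Y/S,\,\Pi(F)} \otimes \OO_F$ realizes $\mathcal{H}|_F$ as a subsheaf of a trivial bundle. The obstruction to $\mathcal{H}|_F$ being a constant subbundle, i.e.\ pulled back from a subspace of $T_{\Y/S,\,\Pi(F)}$, is a second-fundamental-form section living in
\[
H^0\!\bigl(F, \Omega^1_F \otimes \sHom(\mathcal{H}|_F, (\Pi^*T_{\Y/S}/\mathcal{H})|_F)\bigr),
\]
which vanishes because $H^0(F, \Omega^1_F) = 0$. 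Combined with cohomology and base change (justified by projectivity of $\X/S$), this yields the descent globally on an étale neighborhood $U\to S$ on which ranks and direct images are locally constant.

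It remains to verify $T_{\dF/S} = (d\Pi)^{-1}(\Pi^* T_{\dG/S}) = (d\Pi)^{-1}(\mathcal{H})$, which reduces to the inclusion $T_{\X/\Y} \subset T_{\dF/S}$. On the central fiber this is automatic, since $\F$ is a pullback. To propagate it, consider the composite $\phi : T_{\X/\Y} \hookrightarrow T_{\X/S} \twoheadrightarrow N_{\dF/S}$, which vanishes on $X$. Using smoothness of $S$, the $S$-flatness of $N_{\dF/S}$, and induction on Artinian thickenings of the central fiber, one shows $\phi \equiv 0$ formally and hence over an étale neighborhood. The involutivity of $T_{\dG/S}$ is then inherited from that of $T_{\dF/S}$ by applying $\Pi_*$ to the bracket relation.

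The main obstacle I foresee is the descent step: while the central-fiber statement is essentially algebraic, propagating the descent of $\mathcal{H}$ in a family requires careful use of cohomology and base change, and it is here that both hypotheses---projectivity of $\X/S$ and $H^0(F, \Omega^1_F) = 0$---are jointly indispensable. The étale neighborhood in the conclusion is precisely what is needed to turn generic-fiber statements into uniform ones on $U$ and to guarantee that $\Pi_*$ behaves compatibly with base change.
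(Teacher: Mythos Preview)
Your direct-descent approach is genuinely different from the paper's, which proceeds via the Drapeaux scheme: the flag $T_{X/Y}\subseteq T_\F\subseteq T_X$ gives a point $p\in\Drap^3_{T_{\X/S}}$, and one shows the forgetful map $\varphi_2:\Drap^3\to\Quot$ is smooth at $p$ (Proposition~\ref{smooth}), so the section $S\to\Quot$ determined by $T_{\dF/S}\subseteq T_{\X/S}$ lifts \'etale-locally to a family of flags $F_1\subseteq T_{\dF/U}\subseteq T_{\X/U}$; rigidity (Corollary~\ref{rigidity}) then forces $F_1=T_{\X/\Y}|_U$, and Lemma~\ref{jorge} produces $\dG$.

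Your argument, however, has a real gap in the propagation step. The Artinian induction for $\phi:T_{\X/\Y}\to N_{\dF/S}$ does \emph{not} follow from smoothness of $S$ and $S$-flatness of $N_{\dF/S}$ alone: over a curve with parameter $t$, any $\phi=t\psi$ with $\psi\neq 0$ vanishes on the central fibre without vanishing nearby. What makes the induction step go through is precisely the vanishing $\Hom(T_{X/Y},N_\F)=0$: if $\phi\equiv 0\pmod{\m^n}$, then by flatness of $N_{\dF/S}$ its class modulo $\m^{n+1}$ lands in $\Hom(T_{X/Y},N_\F)\otimes(\m^n/\m^{n+1})$, and one needs this group to be zero. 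That vanishing is exactly Lemma~\ref{vanishing}, and it is where the fibral hypothesis $H^0(F,\Omega^1_F)=0$ is actually consumed; you invoke this hypothesis only in the descent step, not here.

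The descent step via second fundamental form is also not correct as written: $H^0(F,\Omega^1_F)=0$ does not force $H^0\bigl(F,\Omega^1_F\otimes\sHom(\mathcal{H}|_F,\Pi^*T_{\Y/S}|_F/\mathcal{H}|_F)\bigr)=0$ unless the $\sHom$ is already known to be trivial---for instance $\OO(-1)\subset\OO^2$ on $\P^1$ has non-vanishing second fundamental form despite $H^0(\P^1,\Omega^1_{\P^1})=0$. Fortunately this whole step is dispensable: once $T_{\X/\Y}\subseteq T_{\dF/S}$ is established (which is the real content), a relative form of Lemma~\ref{jorge} yields $\dG$ directly, and the construction of $\mathcal H$ and its pushforward can be bypassed.
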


The stability problem for pullback foliations under rational maps grows in complexity and was studied by several authors. In \cite{gomez1991structural} the stability of foliations with general rational first integrals $\pi: X\dashrightarrow \P^1$ was settled. Suitable pullbacks of foliations by curves on the projective plane under maps $\pi:\P^m\dashrightarrow \P^2$ are also stable \cite{pullbackCLE}. Infinitesimal techniques were applied in \cite{cukiermanRacionales} (in contrast to the topological arguments of the previous results) not only to give a second proof of the stability of rational foliations, but also to deduce that the corresponding irreducible component of the moduli space of foliations is generically reduced. More recently, the stability of general pullbacks of split foliations on weighted projective spaces under rational maps $\pi: \P^m\dashrightarrow\P_w$ was proved applying an infinitesimal approach \cite{pullbackConFede}. Theorem~\ref{stability_morphisms} provides a third method, scheme theoretical in nature, to prove a partial generalization of these results:

\begin{theorem}{\label{stability_rational_maps}}
Let $X\subseteq \P^N$ be a smooth projective subvariety, $\pi: X\dashrightarrow\P^n$ a dominant rational map with $\OO_X(1) = \pi^\ast\OO(1)$, and $\G$ a split foliation of codimension one on $\P^n$ such that
%\begin{multicols}{2}
\begin{enumerate}[label=(\alph*)]
\item $\dim X\geqslant n+2$,
%\item $\Gamma\big(\Omega^1_X\big) = 0$,
\item $X$ is smooth and arithmetically Cohen-Macaulay,
\item $(\pi,\G)$ is a generic pair (Definition \ref{generic_pair}),
\item the non-Kupka locus of $\G$ has codimension $\geqslant 3$.
\end{enumerate}
%\end{multicols}
Consider a deformation of the pullback foliation $\F=\pi^\ast\G$ given by the exact sequence
\[
0\rightarrow I_{\dF/S}\rightarrow \Omega_{X\times S/S}^{1} \rightarrow \Omega^{1}_{\dF/S}\rightarrow 0.
\]
If $S$ is a smooth curve, then there exists an analytic neighborhood $U\subseteq S$, a rational map $\Pi~:~X\times~U~\dashrightarrow~\P^n$ extending $\pi$ and a deformation $\dG/U$ of $\G$ such that $\dF/U=\Pi^\ast(\dG/U)$.
\end{theorem}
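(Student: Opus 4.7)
The plan is to reduce the statement to Theorem~\ref{stability_morphisms} by resolving the indeterminacy of $\pi$ and transferring the deformation to a birational model on which $\pi$ becomes a morphism. First I would fix a resolution $\nu : \tilde{X} \to X$ of the indeterminacy of $\pi$, obtained by a sequence of blow-ups along smooth centers supported in the base locus $B \subset X$, so that $\tilde{\pi} := \pi \circ \nu : \tilde{X} \to \P^{n}$ is a morphism with $\tilde{X}$ smooth projective. Since $\OO_X(1) = \pi^{*}\OO(1)$, the sections defining $\pi$ are linear forms on $X \subseteq \P^N$, so a general fiber of $\tilde{\pi}$ is a smooth complete intersection of $X$ with a codimension-$n$ linear subspace of $\P^{N}$, of dimension $\geqslant 2$ by hypothesis~(a). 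The linear-forms description together with $\OO_X(1) = \pi^{*}\OO(1)$ moreover forces $B$ to have codimension $\geqslant 2$ in $X$, a fact I will use in the final saturation step.

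Next, I would verify the hypotheses of Theorem~\ref{stability_morphisms} for $\tilde{\pi}$. Because $X$ is smooth and arithmetically Cohen--Macaulay, an iterated application of the Lefschetz hyperplane theorem on hyperplane sections yields $H^{1}(F, \OO_F) = H^{0}(F, \Omega^{1}_F) = 0$ for the general fiber $F$, along with connectedness; the generic pair condition~(c) should moreover ensure that $\tilde\pi$ is dominant with connected fibers. Properness, surjectivity and projectivity of $\tilde{X} \times S \to S$ are immediate, and the trivial extension $\tilde{\Pi} := \tilde{\pi} \times \mathrm{id}_S : \tilde{X} \times S \to \P^{n} \times S$ serves as the required deformation of $\tilde{\pi}$.

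I would then transfer the given deformation to the blow-up. Applying $\nu^{*}$ to
\[
0 \rightarrow I_{\dF/S} \rightarrow \Omega^{1}_{X \times S / S} \rightarrow \Omega^{1}_{\dF/S} \rightarrow 0
\]
and saturating along the exceptional divisors produces a codimension-one deformation $\tilde{\dF}/S$ of $\nu^{*}\F = \tilde{\pi}^{*}\G$ on $\tilde{X} \times S$. Theorem~\ref{stability_morphisms} then yields, after an étale base change $U \to S$ (which, as $S$ is a smooth curve, can be replaced by an analytic neighborhood), a deformation $\dG/U$ of $\G$ with $\tilde{\dF}/U = \tilde{\Pi}^{*}(\dG/U)$. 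Setting $\Pi := \pi \times \mathrm{id}_U : X \times U \dashrightarrow \P^{n}$, the identity $\dF/U = \Pi^{*}(\dG/U)$ holds over $(X \setminus B) \times U$, where $\nu$ is an isomorphism. Because both $I_{\dF/U}$ and the conormal sheaf of $\Pi^{*}(\dG/U)$ are saturated subsheaves of $\Omega^{1}_{X \times U/U}$ and $B$ has codimension $\geqslant 2$, this identity extends uniquely to all of $X \times U$, completing the argument.

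The main obstacle I anticipate is the transfer step: producing a bona fide flat deformation $\tilde{\dF}/S$ on $\tilde{X} \times S$ whose central fiber is exactly $\tilde{\pi}^{*}\G$ and which sits in an exact sequence of the form required by Theorem~\ref{stability_morphisms}, despite the interference of the exceptional divisors of $\nu$. This is where hypotheses~(b)--(d) should combine decisively: (b) supports the Lefschetz computation on fibers; (c) places the base locus of $\pi$ and the singular set of $\G$ in sufficiently general position for the pullback to behave as predicted; and (d) forces the non-Kupka locus of $\G$ to have codimension $\geqslant 3$, so that its contribution to the saturated pullback can be absorbed without introducing spurious divisorial components. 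Controlling these three inputs simultaneously is what I expect to occupy the technical core of the proof.
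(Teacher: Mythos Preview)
Your overall architecture---resolve the indeterminacy of $\pi$, verify the Lefschetz-type vanishing on the fibers, and invoke Theorem~\ref{stability_morphisms}---is exactly the paper's. The genuine gap is precisely the one you flag in your last paragraph, and it is fatal as written. Pulling back by the \emph{fixed} blow-up $\nu:\tilde X\to X$ and ``saturating along the exceptional divisors'' does not produce a flat family whose central fibre is $\tilde\pi^\ast\G$: for $s\neq 0$ the singularities of $\dF_s$ near $B$ need not sit on $B$ at all, so $\nu^\ast\dF_s$ is already saturated and its defining form does not vanish on the exceptional divisor, whereas at $s=0$ it vanishes there to order $m_B(\F)>0$. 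The multiplicity jumps, Lemma~\ref{def_blowup} fails, and the saturated family acquires the wrong central fibre. For the same reason your choice $\Pi=\pi\times\mathrm{id}_U$ is generally wrong: if the base-locus singularities move, no foliation pulled back by the constant $\pi$ can match $\dF_s$.

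What is missing is the stability of the conical singularities along $B$, which is the content of Theorem~\ref{stabcones}. The split hypothesis on $\G$ together with condition~(d) give $\Unf_\G=0$ (Proposition~\ref{split}), and via Lemma~\ref{base_locus_structure} the germ of $\F$ at each $p\in B$ is the cone of $\G$; Theorem~\ref{stabcones} then forces the nearby $\dF_s$ to have the same conical type, so the base locus deforms to a flat family $\B\subset X\times S$ contained in $\Sing(\dF/S)$. One then chooses sections $\sigma_i$ deforming the $s_i$ with $\B=V(\sigma_0,\dots,\sigma_n)$, obtaining a \emph{varying} $\Pi$, and blows up $X\times S$ along $\B$ rather than along the constant $B$. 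Now the multiplicity is constant, Lemma~\ref{def_blowup} applies, and the resulting family on the blow-up has the correct central fibre. A second point you pass over: Theorem~\ref{stability_morphisms} is stated for families of integrable distributions, not Pfaff systems, so before invoking it you must dualize via Lemma~\ref{dualizar}, which in turn requires $\Sing(\Bl(\dF)/S)$ to be flat over $S$; this is established using the Morse/Kupka stability of Proposition~\ref{Kupka&Morse} away from the exceptional locus and the conical stability near it, and is exactly where condition~(c) and the Kupka analysis from~(d) are consumed.
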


One of the main ingredients of the proof of Theorem~\ref{stability_rational_maps}, that we present in Section~\ref{ssec:proofT02}, is the stability of conical foliation singularities. This problem was addressed in \cite{camacho1982topology} for cones of foliations by curves, and applied latter in the previously mentioned main result of \cite{pullbackCLE}. Deformations of conical singularities of varieties were studied first in \cite{schlessinger1973rigid}. It was proved that, given a smooth projective subvariety $X\subseteq \P^n$ such that
\[
H^1\big(X,T_X(\ell)\big) = 0 \quad\forall \ell\neq 0,
\]
deformations of the germ at the origin of the affine cone $C(X)$ are obtained as cones of deformations of the inclusion $X\subseteq \P^n$. It is worth to notice that these cohomology groups parametrize \textit{thickenings} of $X$, respectively bent according to the line bundles $\OO_X(\ell)$ \cite[p.15-16]{sernesi2007deformations}. Turning back to the foliated scenario, the\textit{ cone of a foliation} $\F$ on $\P^n$ is the pullback $C(\F):= \pi^\ast\F$ under the quotient map $\pi: \C^{n+1}\dashrightarrow \P^n$. In analogy with the concept of thickening, the notion of \textit{unfolding of a foliation} was introduced by Suwa and explored throughout several articles \cite{suwa1985determinacy,suwa1992unfoldings,molinuevo2016unfoldings}. Infinitesimal unfoldings are parametrized by a graded module $\Unf_\F$ (see Proposition \ref{inf_unfoldings} and the paragraph below). In this way we could obtain a foliated version of Schlessinger's Theorem for conical singularities:

\begin{theorem}{\label{stabcones}}
Suppose that $\F$ is a foliation of codimension one and degree $k-2$ on $\P^n$ without polynomial integrating factors such that
\[
\Unf_{\F}(\ell) = 0\quad\; \forall\;\ell \neq k.
\]
If $\dF/S$ is the universal family of foliations of codimension one and degree $k-2$ on $\P^n$, then the cone of this family $\big(C(\dF)/S,0\big)$ is a versal deformation of $(C(\F),0)$.
\end{theorem}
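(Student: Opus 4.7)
The plan is to verify formal versality by combining two ingredients: first, that the Kodaira--Spencer map of $C(\dF)/S$ at the distinguished point is surjective onto the space of first-order deformations of the germ $(C(\F),0)$; and second, that obstructions to lifting deformations of the germ along small extensions of Artin local rings are always realized by liftings inside $S$. Both ingredients will be controlled by the natural $\mathbb{G}_m$-action on $C(\F)\subset\C^{n+1}$, which grades every deformation-theoretic invariant by weight, so that the hypothesis $\Unf_\F(\ell)=0$ for $\ell\neq k$ collapses all non-trivial contributions to a single weight.

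First I would represent $\F$ by a twisted form $\omega\in H^0(\P^n,\Omega^1_{\P^n}(k))$, unique up to scalar thanks to the absence of polynomial integrating factors, and describe the germ $(C(\F),0)$ by the pullback $\pi^\ast\omega$, a Pfaffian form on $(\C^{n+1},0)$ that is homogeneous of degree $k$. Infinitesimal deformations of the germ are then parametrized by homogeneous Pfaff perturbations $\eta$ satisfying the first-order integrability condition modulo the natural gauge equivalence, giving a $\mathbb{G}_m$-grading $T^1_{(C(\F),0)}=\bigoplus_\ell T^1_\ell$ by the weight of $\eta$. The key identification to establish is $T^1_\ell\cong\Unf_\F(\ell)$: a weight-$\ell$ integrable perturbation of $\omega$ modulo gauge should be exactly the datum of an infinitesimal unfolding of weight $\ell$, in the sense of Proposition~\ref{inf_unfoldings}. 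The hypothesis then collapses $T^1$ to $T^1_k$, which is precisely the image of the Kodaira--Spencer map of $\dF/S$ under the cone construction, so tangent surjectivity is immediate.

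For versality beyond first order I would proceed by induction on small extensions $A'\twoheadrightarrow A$ of Artin local rings: given a deformation of $(C(\F),0)$ over $A$ which is the cone of an element of $S(A)$, the obstruction class to extending it to $A'$ again decomposes by $\mathbb{G}_m$-weight. The vanishing hypothesis rules out contributions outside weight $k$ at each stage, and the weight-$k$ obstruction matches the obstruction to lifting the corresponding deformation in $S$; versality of the universal family on $S$ then provides the required lift, whose cone lifts the original deformation of the germ. The main obstacle, and the step I expect to require the most care, is the identification $T^1_\ell\cong\Unf_\F(\ell)$ at the germ level: one must verify that Pfaff perturbations and their integrability condition on the germ $(\C^{n+1},0)$ faithfully reproduce the global unfolding module on $\P^n$, which should follow from Hartogs-type extension across the origin together with the rigidity supplied by the no-integrating-factor hypothesis, preventing spurious automorphisms from collapsing genuine unfoldings.
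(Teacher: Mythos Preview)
Your approach via graded formal tangent--obstruction theory is not the route the paper takes, and as written it has real gaps. The paper's argument is analytic and brief: given an analytic deformation $\dF'/(\C^{m'},0)$ of the germ, the hypothesis $\Unf_\F(\ell)=0$ for $\ell<k$ says the origin is a \emph{regular} singularity in the sense of Camacho--Lins Neto, and their stability theorem (Theorem~\ref{stab_sing_reg}) keeps a regular degree-$k$ singularity at the origin throughout the family; semicontinuity of $\dim K(\omega)$ preserves the absence of integrating factors; then the hypothesis $\Unf_\F(\ell)=0$ for $\ell>k$ is fed into Suwa's finite-determinacy machinery (Proposition~\ref{determinacy_los_chicos}) to show each fibre is isomorphic to its homogeneous $k$-jet, which by Remark~\ref{symmetries} descends to $\P^n$. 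At that point the family is a family of cones, and universality of $\dF/S$ for foliations on $\P^n$ finishes.

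There are three concrete problems with your proposal. First, the module $\Unf_\F$ parametrizes first-order \emph{unfoldings} (foliations on the total space, carrying the extra term $h\,d\varepsilon$), not first-order \emph{deformations} of the germ; these are different functors, and your identification $T^1_\ell\cong\Unf_\F(\ell)$ requires Molinuevo's isomorphism $\Unf_\omega(\ell)\cong H^1(C^\bullet_\omega(\ell))$, valid only for $\ell\neq k$, which you do not invoke. Second, your inductive step asserts that ``the vanishing hypothesis rules out [obstruction] contributions outside weight $k$,'' but $\Unf_\F$ is a $T^1$-type space, not an obstruction ($T^2$-type) space; the hypothesis says nothing about the weights of obstruction classes. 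What you actually need is that whenever the germ deformation $C(\eta)$ lifts over $A'$, the point $\eta$ itself lifts in $S$ --- a comparison of two obstruction theories that you have not justified (and $S=\Fol_{\P^n,k-2}$ is in general singular, so this is not automatic). Third, even granting all of the above, your lifting argument along Artin local rings yields only \emph{formal} versality; the theorem is stated for analytic germs over $(\C^{m},0)$, and passing from formal to analytic is exactly the content that the paper supplies via Suwa's analytic determinacy theorem and Artin approximation inside Proposition~\ref{determinacy_los_chicos}.
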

The proof of Theorem~\ref{stabcones} can be found in Section~\ref{ssec:proofT03}.
A collection of examples of foliations $\F$ verifying the hypothesis in this theorem are those of \textit{split type}, i.e, whose tangent sheaf $T_\F$ is a direct sum of line bundles. More concretely, a codimension one split foliation on $\P^n$ whose non-Kupka locus has codimension at least three has~$\Unf_\F = 0$ \cite[p.21]{massri2015kupka}. As a consequence, \cite{CukiermanSplit} provides a list of families of \textit{rigid} foliation singularities.

On the other hand, we were also able to apply the techniques behind Theorem \ref{stability_morphisms} to give a criterion for the stability of foliations having algebraic leaves in the next theorem, which is proved in Section~\ref{ssec:proofT04}.

\begin{theorem}{\label{stab_leaves}}
Let $\F$ be a codimension one foliation on a smooth projective variety $X$, and $Z$ a smooth unobstructed subvariety of $X$. Suppose $Z$ is an algebraic leaf of $\F$ such that $$H^0\big(Z,\Omega^1_Z\otimes N_{Z/X}\big) = H^0\big(Z,\Omega^1_Z\otimes N_{Z/X}^2\big) = 0.$$
Given a deformation $\dF/S$ of $\F$ of the form
\[
0\rightarrow T_{\dF/S}\rightarrow T_{X\times S/S}\rightarrow N_{\dF/S}\rightarrow 0,
\]
there exists an étale neighborhood $U\rightarrow S$ and a deformation $\mathcal{Z}\subseteq X\times U$ of the subvariety $Z\subseteq X$ such that $\mathcal{Z}_u$ is an algebraic leaf of $\dF|_u$ for all $u\in U$.
\end{theorem}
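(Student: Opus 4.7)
The plan is to analyze the forgetful natural transformation from deformations of the pair $(\F,Z)$ (foliation together with the distinguished algebraic leaf) to deformations of $\F$ alone, and show that this forgetful functor is formally smooth at $(\F,Z)$. Given such smoothness, the existence of $\mathcal{Z}/U$ follows from the smoothness of $S$ together with Artin approximation to pass from the formal to the étale--local setting, in complete analogy with the strategy behind Theorem~\ref{stability_morphisms}. Reducing to formal deformation theory via the complete local ring $\widehat{\mathcal{O}}_{S,s_0}$ at a chosen closed point $s_0\in S$, I would construct a compatible system $\mathcal{Z}_n\subseteq X\times\Spec A_n$, with $A_n=\widehat{\mathcal{O}}_{S,s_0}/\m^{n+1}$, each an algebraic leaf of $\dF_n:=\dF|_{A_n}$, by induction on $n$.

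For the inductive step, suppose $\mathcal{Z}_n$ is already built. Since $Z$ is unobstructed as a subvariety, $\mathcal{Z}_n$ can be extended to a subscheme $\mathcal{Z}'\subseteq X\times\Spec A_{n+1}$. The failure of $\mathcal{Z}'$ to be a leaf of $\dF_{n+1}$ is detected by the restriction of the Pfaff form defining $\dF_{n+1}$ to $\mathcal{Z}'$; since this restriction is forced to vanish modulo $\m^{n+1}$ (because $\mathcal{Z}_n$ is a leaf of $\dF_n$), it gives rise to a class in $H^0(Z,\Omega^1_Z\otimes N_\F|_Z)\otimes(\m^{n+1}/\m^{n+2})$. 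Using the identification $N_\F|_Z\cong N_{Z/X}$ available for a regular algebraic leaf, the first vanishing hypothesis annihilates this obstruction, so $\mathcal{Z}'$ is actually a leaf of $\dF_{n+1}$.

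The role of the second hypothesis $H^0(Z,\Omega^1_Z\otimes N_{Z/X}^2)=0$ is more delicate: it is used to control a second-order obstruction that arises from the interaction of the integrability condition $d\omega\wedge\omega=0$ with the formal neighborhood of $Z$ inside $X$. The twist by $N_{Z/X}^2$ appears naturally when one differentiates the defining form once in the normal direction of $Z$, and its vanishing guarantees that the inductive choice of $\mathcal{Z}'$ can be continued compatibly at the next Artin order, yielding the formal smoothness of the forgetful functor. This parallels the higher-twist cohomology conditions governing the proof of Theorem~\ref{stability_morphisms}.

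Finally, Artin approximation converts the formal family $\widehat{\mathcal{Z}}$ over $\Spec\widehat{\mathcal{O}}_{S,s_0}$ into an algebraic family $\mathcal{Z}\subseteq X\times U$ over an étale neighborhood $U\to S$, with $\mathcal{Z}_u$ an algebraic leaf of $\dF|_u$ for every $u\in U$. The main obstacle I expect is the precise localization of the second-order obstruction in $H^0(Z,\Omega^1_Z\otimes N_{Z/X}^2)$: correctly identifying the sheaf with the right twist requires careful bookkeeping of how $d\omega$ behaves along the first-order infinitesimal neighborhood of $Z$, and of the way in which the non-trivial normal bundle $N_{Z/X}$ enters the iterated restriction of the Pfaff form.
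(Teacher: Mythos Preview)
Your approach via restricting the Pfaff form to $\mathcal Z'$ is not the paper's, and it contains a genuine gap. The restriction $\omega|_{\mathcal Z'}$ lands in $H^0(\Omega^1_Z\otimes L|_Z)\otimes(\m^{n+1}/\m^{n+2})$ with $L=\det N_\F$, and the identification $L|_Z\simeq N_{Z/X}$ you invoke for a ``regular'' leaf holds only when $Z\cap\Sing\F=\varnothing$, which is not among the hypotheses; in general $L|_Z\simeq N_{Z/X}\otimes\OO_Z(D)$ with $D$ the divisorial part of $Z\cap\Sing\F$ (already for a pencil of conics on $\P^2$ a smooth conic leaf passes through the four base points and the two bundles differ). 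Hence the first vanishing does not annihilate your naive obstruction. You also never exploit the freedom to modify the chosen lift $\mathcal Z'$ by a section of $N_{Z/X}$; the true obstruction to finding \emph{some} leaf extension lies in the cokernel of the resulting map $H^0(N_{Z/X})\to H^0(\Omega^1_Z\otimes L|_Z)$, which you neither write down nor control. Your paragraph on the second hypothesis is consequently speculation: there is no ``second--order'' term, and the integrability condition $d\omega\wedge\omega=0$ plays no role whatsoever in the actual argument.

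The paper instead encodes ``$Z$ is a leaf'' as the flag $T_\F\subseteq T_X(-\log Z)\subseteq T_X$ and proves two smoothness statements. First, the forgetful map $\varphi_1:\Drap^3_{T_X}\to\Quot_{T_X}$ dropping the middle term is smooth at this flag, via Remark~\ref{smooth2}; condition~(a) there reads $\Hom(T_\F,N_{Z/X})\simeq\Gamma(\Omega^1_Z\otimes N_{Z/X})=0$, which is the first hypothesis, while condition~(b') asks that $\Quot_{T_X}$ be smooth at the point $T_X(-\log Z)\subseteq T_X$. Second, the map $\varphi:\Hilb_X\dashrightarrow\Quot_{T_X}$, $W\mapsto\bigl(T_X(-\log W)\subseteq T_X\bigr)$, is smooth at $Z$. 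Since $Z$ is unobstructed and $d\varphi$ is injective by Lemma~\ref{lemita}, this reduces to the inequality $\dim\Hom\bigl(T_X(-\log Z),N_{Z/X}\bigr)\leq\dim\Gamma(N_{Z/X})$, obtained from the column $0\to I_{Z/X}T_X\to T_X(-\log Z)\to T_Z\to 0$ and the conormal sequence after observing that $\Hom(I_{Z/X}T_X,N_{Z/X})=\Gamma(\Omega^1_X|_Z\otimes N_{Z/X}^2)$. This dimension count is precisely where the twist by $N_{Z/X}^2$ enters, and it has nothing to do with differentiating $\omega$. Smoothness of $\varphi$ together with smoothness of $\Hilb_X$ at $Z$ then yields the smoothness of $\Quot_{T_X}$ needed for~(b'), and the two liftings combine to produce, \'etale--locally over $S$, first a flag $T_{\dF}\subseteq F_2\subseteq T_{X\times U/U}$ and then a family $\mathcal Z$ with $F_2=T_{X\times U/U}(-\log\mathcal Z)$.
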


We briefly explain the main ideas of the proofs of Theorems~\ref{stability_morphisms} and~\ref{stab_leaves}. Both of them rely on the \textit{Drapeaux schemes} parametrizing flags of subsheaves of a given coherent sheaf, a construction due to Grothendieck. Given a projective scheme $\X/S$ and $\E$ a coherent sheaf on $\X$ that is flat over $S$, there is a scheme \smash{$\Drap^\ell_\E$} that represents the functor $\operatorname{Sch_{/S}}\rightarrow \operatorname{Set}$ which assigns to an arrow $T\rightarrow S$ the set of flags of $\OO_{X\times_S T}$-modules
\[
0 = F_0 \subseteq F_1 \subseteq \cdots \subseteq F_{\ell-1}\subseteq F_{\ell} = \E\otimes_{\OO_S}\OO_T
\]
such that the successive quotients $F_i/F_{i-1}$ are flat over $T$. 

Concerning the ideas behind Theorem~\ref{stability_morphisms}, pullback foliations of the form $\F = \pi^\ast\G$ are exactly those for which the inclusion $T_{X/Y}\subseteq T_\F$ holds \cite[Lem 2.22]{cerveau2006algebraic}. If that is the case, we obtain a filtration
\[
T_{X/Y}\subseteq T_{\F} \subseteq T_{X}.
\]
corresponding to a point \smash{$p\in \Drap_{T_\X}^3$}. On the other hand, the forgetful map defined by $(F_1\subseteq F_2\subseteq T_{\X})\mapsto (F_2\subseteq T_{\X})$ induces a morphism \smash{$\Drap_{T_\X}^3\rightarrow \Quot_{T_\X}$}. A deformation $T_{\dF/S}\subseteq T_\X$ gives a map \smash{$S\rightarrow \Quot_{T_\X}$}, and the desired stability properties of pullback foliations would imply the existence (at least étale locally) of a lifting \smash{$S\rightarrow \Drap_{T_\X}^3$} passing through the point $p$. We apply infinitesimal deformation theory to prove that \smash{$\Drap_{T_\X}^3\rightarrow \Quot_{T_\X}$} is smooth at $p$ under the mentioned hypothesis on the fibers of $\pi:X\rightarrow Y$, and therefore we can conclude the existence of such lifting.
This method can be also used to study stability properties of algebraic leaves. The key observation is that a subvariety $Z\subseteq X$ is an algebraic leaf of $\F$ whenever the inclusion $T_{\F}\subseteq T_{X}(-\log Z)$ holds. Hence similar considerations apply to the flag
\[
T_{\F}\subseteq T_{X}(-\log Z)\subseteq T_X,
\]
and the map \smash{$\Drap_{T_\X}^3\rightarrow \Quot_{T_\X}$} witch forgets the second element of a length three flag.\\

\textbf{Acknowledgements.} This work is part of the author's PhD thesis at Universidad de Buenos Aires under the supervision of Fernando Cukierman, fully supported by CONICET. We would like to thank Fernando for his generous guidance, which has been invaluable not only academically but also as a mentor in a broader sense. We would like to express our sincere gratitude to Alicia Dickenstein and Sebastián Velazquez for kindly dedicating their time to help improve this work. We also wish to thank Henrique Bursztyn, Hossein Movasati and Claudia Reynoso Alcántara for their useful comments and suggestions.

%\tableofcontents

% Capítulos del trabajo
\section{Preliminaries} 

In this introductory section we recall some notions on singular foliations and deformation theory. We summarize some basic results on obstruction theory of local rings.

\begin{definition}
A \textit{(singular) foliation} $\F$ of codimension $q$ on a normal variety $X$ consists of an exact sequence of coherent bundles
\[
0\rightarrow T_{\F}\rightarrow T_X\rightarrow N_\F\rightarrow 0
\]
such that $N_\F$ is torsion-free of rank $q$ and $[T_\F,T_\F]\subseteq T_\F$. We will call $T_{\F}$ and $N_{\F}$ respectively the \textit{tangent} and \textit{normal sheaf} of the foliation $\F$.
\end{definition}

Let \smash{$\Omega^{[p]}_X := (\Omega^{p}_X)^{\vee\vee}$} be the sheaf of reflexive holomorphic $p$-forms. A codimension $q$ foliation $\F$ can be alternatively described as a short exact sequence of coherent sheaves
\[
0\rightarrow I_{\F}\rightarrow \Omega_{X}^{[1]} \rightarrow \Omega^{[1]}_{\F}\rightarrow 0
\]
with $\Omega^{[1]}_\F$ torsion-free and, in a neighborhood of a general point, there is a local basis $\omega_1,\dots,\omega_q\in I_\F$ satisfying the \textit{Frobenius integrability conditions}
\[
d\omega_i \wedge \omega_1\wedge\cdots\wedge\omega_q = 0\quad(1\leqslant i \leqslant q).
\]

More explicitly, if we dualize the exact sequence that determines $\F$ we obtain a left exact sequence
\smash{$0\rightarrow N_\F^\vee \rightarrow \Omega_{X}^{[1]} \rightarrow T_{\F}^\vee$} and therefore the previous sheaves turn out to be $I_\F := N_\F^\vee$ and \smash{$\Omega^{[1]}_{\F} := \operatorname{Im}\big(\Omega^{[1]}_{X}\rightarrow T_{\F}^\vee\big)$}. These two ways of representing $\F$ are equivalent since if we dualize the exact sequence of differential forms we obtain the original sequence of vector fields of the definition of foliation.\\

Given a foliation $\F$ of codimension q, the quotient $T_X\rightarrow N_\F$ induces a morphism \smash{$(\wedge^q T_X)^{\vee\vee} \rightarrow (\wedge^q N_\F)^{\vee\vee} = \det N_\F$}, and therefore a twisted $q$-form \smash{$\omega\in \Gamma\big(\Omega^{[q]}_X(\det N_{\F})\big)$}.

\begin{definition} \label{sing_fol}
The \textit{singular locus} of $\F$ is the scheme $\Sing (\F) = \Sing (\omega)$, that is, the zero locus of the twisted $q$-form $\omega$.
\end{definition} 

\begin{definition} Let $\pi:X\rightarrow Y$ be a morphism between normal varieties and $\G$ a foliation on $Y$. The \textit{pullback} $\F = \pi^\ast \G$ is defined as that foliation on $X$ whose tangent sheaf $T_\F$ is the kernel of the composition
\[
T_X \rightarrow \pi^{[\ast]} T_Y \rightarrow \pi^{[\ast]} N_\G,
\]
where \smash{$\pi^{[\ast]}(-) = (\pi^{\ast}(-))^{\vee\vee}$} is the reflexive pullback. More generally, if $\pi: X\dashrightarrow Y$ is a rational map with domain $U\subseteq X$, we can define the pullback $\F = \pi^\ast\G$ as the unique foliation whose restriction to $U$ coincides with $(\pi|_U)^\ast \G$. 
\end{definition} 

It is clear from the definition that a necessary condition for $\F$ to be the pullback by a morphism $\pi: X\rightarrow Y$ of a foliation on $Y$ is that there is an inclusion between tangent spaces $T_{X/Y}\subseteq T_{\F}$. The following result says that this condition is also sufficient.

\begin{lemma}[{{\cite[Lem 2.22]{cerveau2006algebraic}}}]{\label{jorge}}
Let $\pi: X\rightarrow Y$ be a surjective morphism between normal varieties with connected fibers, and $\F$ be a foliation on $X$. If the tangent sheaf $T_\F$ contains $T_{X/Y}$, then there exists a foliation $\G$ on $Y$ such that $\F = \pi^\ast\G$.
\end{lemma}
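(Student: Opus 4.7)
The plan is to construct $\G$ first on a dense open $V \subseteq Y$ via a descent argument using the Bott connection along the fibers of $\pi$, and then extend to all of $Y$ by saturation, taking advantage of normality.

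First I would restrict to a dense open $V \subseteq Y$ and $X_V := \pi^{-1}(V)$ on which $X_V$ and $V$ are smooth, $\pi|_{X_V}: X_V \to V$ is a smooth submersion, and $T_\F|_{X_V}$ is a subbundle of $T_{X_V}$. Since $T_{X/Y} \subseteq T_\F$, taking quotients in the relative tangent sequence
\[
0 \to T_{X_V/V} \to T_{X_V} \to \pi^*T_V \to 0
\]
yields a subbundle $\overline{T}_\F := T_\F|_{X_V}/T_{X_V/V} \subseteq \pi^*T_V$. The task reduces to showing that $\overline{T}_\F$ is the pullback of a subsheaf $T_\G \subseteq T_V$.

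Next I would exploit integrability of $T_\F$ through the Bott partial connection $\nabla$ on $\pi^*T_V$ along fibers, defined by $\nabla_v \xi := [v,\tilde\xi] \bmod T_{X_V/V}$ for any lift $\tilde\xi \in T_{X_V}$ of $\xi$. This partial connection is flat, and its horizontal local sections are exactly the pullbacks of local sections of $T_V$. Because $T_{X/Y} \subseteq T_\F$, the Frobenius condition $[T_\F,T_\F] \subseteq T_\F$ gives $[T_{X_V/V},T_\F|_{X_V}] \subseteq T_\F|_{X_V}$, so $\overline{T}_\F$ is $\nabla$-invariant and inherits its own flat partial connection along fibers. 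By the Frobenius theorem for flat partial connections, $\overline{T}_\F$ is locally spanned by $\nabla$-flat sections, which then arise as pullbacks from $T_V$. Connectedness of fibers, which implies $\pi_*\OO_{X_V} = \OO_V$, lets these local pullback generators glue into a well-defined coherent subsheaf $T_\G \subseteq T_V$ with $\pi^*T_\G = \overline{T}_\F$. Integrability of $T_\G$ follows from that of $T_\F$ modulo $T_{X_V/V}$, and $N_\G := T_V/T_\G$ is torsion-free since $\pi^*N_\G = N_\F|_{X_V}$ and $\pi$ is flat.

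Finally I would extend $\G$ to $Y$ by defining $T_\G \subseteq T_Y$ as the saturation in $T_Y$ of $j_*(T_\G|_V) \cap T_Y$, where $j : V \hookrightarrow Y$. Normality of $Y$ ensures $N_\G$ remains torsion-free and that the extension is coherent; the equality $\pi^*\G = \F$ holds on $X_V$ by construction and propagates to $X$ since both $\F$ and $\pi^*\G$ are saturated foliations on the normal variety $X$ that agree on the complement of a subset of codimension at least two. The main obstacle I anticipate is the descent step: ensuring that the $\nabla$-flat local sections of $\overline{T}_\F$ glue to a single coherent subsheaf of $T_V$ rather than only a subsheaf after an étale base change. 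Connectedness of fibers, packaged as $\pi_*\OO_{X_V} = \OO_V$, is precisely what rules this out and keeps the conclusion on $Y$ itself; without it, one would only recover $\G$ on an étale cover of $Y$.
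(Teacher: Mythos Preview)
The paper does not supply its own proof of this lemma: it is quoted from \cite[Lem 2.22]{cerveau2006algebraic} and used as a black box, so there is nothing in the text to compare your argument against.

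That said, your outline is essentially the standard argument and is correct in its broad strokes. The reduction to a smooth open set where $\pi$ is a submersion, the use of the Bott partial connection on $\pi^*T_V$ to show that $\overline{T}_\F = T_\F/T_{X/Y}$ is constant along fibers, and the extension by saturation on a normal variety are exactly the right ingredients. One small caveat: the identity $\pi_*\OO_{X_V}=\OO_V$ you invoke is not automatic without some properness assumption, and it is not quite what the descent step needs anyway. The cleaner phrasing is that, since $\pi|_{X_V}$ is a smooth submersion, it is locally (analytically or \'etale-locally) a product, so the $\nabla$-flat subbundle $\overline{T}_\F\subseteq \pi^*T_V$ is locally a pullback; connectedness of the fibers then kills any potential monodromy and lets these local descriptions glue to a genuine subsheaf $T_\G\subseteq T_V$. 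With that adjustment the argument goes through.
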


\subsection{Deformations of foliations} We follow closely the definitions and ideas presented in \cite{Quallbrunn_2015}. In order to introduce the notion of families of foliations over a non-reduced base, we need replacement of the torsion-free hypothesis on the normal sheaf $N_\F$ that behaves well over non integral schemes:

\begin{definition}
We will say that a sheaf of $\OO_X$-modules $N$ on a scheme $X$ is \textit{torsionless} if the natural map $N\rightarrow N^{\vee\vee}$ is a monomorphism.
\end{definition}

\begin{definition}\label{def:family}
Let $\X/S$ be a locally trivial family of normal varieties. A \textit{family of integrable (not necessarily saturated) distributions} over $\X/S$ consists of a short exact sequence of coherent sheaves
\[
0\rightarrow T_{\dF/S}\rightarrow T_{\X/S}\rightarrow N_{\dF/S}\rightarrow 0
\]
such that $N_{\dF/S}$ is torsionless and $T_{\dF/S}$ is closed under the relative Lie bracket of $\X/S$.
\end{definition}

When the base $S$ is integral, the torsionless condition on $N_{\dF/S}$ is equivalent to being torsion-free. However, both notions are different in general. The reason why we will stick with the former is that these kinds of families admit an equivalent description in terms of differential forms. Let us denote by \smash{$\Omega^{[p]}_{\X/S} =(\Omega^{p}_{\X/S})^{\vee\vee}$} the sheaf of \textit{relative reflexive p-forms}.

\begin{definition}
A \textit{family of involutive (not necessarily saturated) Pfaff systems} on $\X/S$ consists of a short exact sequence of coherent sheaves
\[
0\rightarrow I_{\dF/S}\rightarrow \Omega_{\X/S}^{[1]} \rightarrow \Omega^{[1]}_{\dF/S}\rightarrow 0
\]
with \smash{$\Omega^{[1]}_{\dF/S}$} torsionless and, in a neighborhood of a general point, there is a local basis $\omega_1,\dots,\omega_q$ of $I_{\dF/S}$ satisfying  
$$d\omega_i\wedge\omega_1\wedge\dots\wedge\omega_q = 0\quad (1\leqslant i\leqslant q),$$ 
where we denoted by $d$ the relative de Rham differential of $\X/S$.
\end{definition}

Following the same procedure as in the preceding sections, to every family of integrable distributions there corresponds via duality a family of involutive Pfaff systems and vice-versa \cite[Prop. 2.30]{warner1983foundations}. Here we are implicitly using that a subsheaf of a torsionless sheaf is also torsionless and that the dual of any $\OO_X$-module bundle is torsionless as well. Since normal and conormal sheaves are torsionless, the proof of \cite[Lem. 4.2]{Quallbrunn_2015} guarantees that this is a 1-1 correspondence. \\

Although these represent two aspects of the same underlying mathematical structure, a distinction in terminology for these types of families is justified for the following reasons:\\

\begin{itemize}
\item We would first need a sensible notion of flat families. As expected, we will say that a family of integrable distributions (resp. involutive Pfaff systems) is \textit{flat} if the sheaf \smash{$N_{\F/S}$} (resp. \smash{$\Omega^{[1]}_{\F/S}$}) is flat over $S$. Nevertheless, these two conditions of flatness are not equivalent \cite[Section 2]{Quallbrunn_2015}.\\

\item Both types of families form a collection of foliations not necessarily saturated. More precisely, given a closed point $s\in S$ and a flat family of integrable distributions we obtain via restriction to the fiber $\X_s$ a short exact sequence
\[
0\rightarrow \left. (T_{\dF/S})\right\vert_{s}\rightarrow T_{\X_s}\rightarrow \left. (N_{\dF/S})\right\vert_{s}\rightarrow 0.
\]
where the injectivity of the first morphism is guaranteed by the flatness of $N_{\dF/S}$ on the basis $S$ (note that $T_{\X_s} = T_\X|_s$ since $\X/S$ is a locally trivial family). However, the sheaf \smash{$\left.(N_{\dF/S})\right\vert_{s}$} is not necessarily torsion-free, and hence we need to saturate in order to get a foliation. Analogous problems arise for flat families of Pfaff systems as well. \\
\end{itemize}

We can avoid the flatness problem if we assume that $S$ is a smooth curve. In this case $\X$ is integral, both sheaves \smash{$N_{\F/S}$} and \smash{$\Omega_{\dF/S}^{[1]}$} are torsion-free, and hence they are flat over $S$ \cite[Exercise 24.4.B]{vakil2017rising}. This agrees with the example in \cite[Section 2]{Quallbrunn_2015} because it has a surface as a base $S$. Let us illustrate the second of the phenomena.

\begin{example}[{{\cite[Example 54]{tesisSeba}}}]{\label{seba}} %Example 54
Consider the space \smash{$\P^4 = \P(\Sym^4(\C^2))$} of binary forms of degree four together with the action of $\operatorname{SL}_2(\C)$ given by change of coordinates. It determines a foliation $\F$ on $\P^4$ whose leaves are the closures of the codimension one orbits. The classical $j$-invariant is a rational first integral of $\F$, and therefore is defined by the $1$-form $\omega_0 = 3g_0df_0-2f_0dg_0$, with $f_0$ and $g_0$ homogeneous polynomials of degrees 2 and 3 respectively \cite[p.7]{calvo2006note}. On the other hand \smash{$T_\F = \mathfrak{sl}_2(\C)\otimes_{\C}\OO_{\P^4}\subseteq T_{\P^4}$}, because the action is free in codimension two.

Let $f_s$ and $g_s$ be two generic deformations of the above polynomials, parameterized by the affine line $S = \A^1$. We then obtain a flat family of involutive Pfaff systems
\[
0\rightarrow \OO_{\P^4\times S}(-5)\rightarrow \Omega^1_{\P^{4}\times S/S}\rightarrow \Omega^{1}_{\dF/S}\rightarrow 0,
\]
where the morphism on the left corresponds to the 1-form $\omega_s = 3g_sdf_s-2f_sdg_s$. Since $\omega_0$ does not vanish in codimension one, the sheaf \smash{$(\Omega^{1}_{\dF/S})\big\vert_{0}$} is torsion-free. Nevertheless \smash{$\left.(N_{\dF/S})\right\vert_{0}$} has torsion. If this was not the case this restriction should coincide with the normal sheaf $N_{\F}$, and \smash{$\left.(T_{\dF/S})\right\vert_{0}$} with $T_\F\simeq \OO_{\P^4}^3$. But this last bundle is rigid and consequently $\omega_s$ would have trivial tangent sheaf for values of $s\in \A^1$ close to zero. This is a contradiction because split foliations have equidimensional singular locus, while a generic rational foliation has isolated singularities \cite{cukierman2006singularities}.
\end{example}

This last step suggests why the dualization process fails: the singular loci of the previous foliations do not assemble into a flat family. This phenomenon was studied in detail in \cite{Quallbrunn_2015}. In general we define the \textit{singular locus} of a family of integrable distributions/involutive Pfaff systems of codimension $q$ as the zero locus $\Sing(\dF)$ of the twisted $q$-form \smash{$\omega\in \Gamma\big(\Omega^{[q]}_{\X/S}(\det N_{\dF/S})\big)$} analogously obtain as in Definition \ref{sing_fol}.

\begin{definition} \label{def:def}
We will say that a flat family of integrable distributions (resp. flat family of involutive Pfaff systems) $\dF/S$ is a \textit{deformation} of a foliation $\F$ on $X$ if there exists a closed point $s \in S$ and an isomorphism $X\simeq \X_s$ such that the restriction of the exact sequence defining $\dF/S$ to the fiber over $s$ is identified via this isomorphism with the exact sequence of vector fields (resp. differential forms) that defines $\F$.
\end{definition}

\begin{lemma}{\label{dualizar}}
Suppose that $\X/S$ is a smooth family over a smooth curve $S$ and the family of involutive Pfaff systems
\[
0\rightarrow I_{\dF/S}\rightarrow \Omega_{\X/S}^{[1]} \rightarrow \Omega^{[1]}_{\dF/S}\rightarrow 0
\]
is a deformation of a codimension one foliation $\F$. If the singular locus $\Sing(\dF)$ is flat over $S$, then the dual family of integrable distributions
\[
0\rightarrow T_{\dF/S}\rightarrow T_{\X/S}\rightarrow N_{\dF/S}\rightarrow 0
\]
is also a deformation of $\F$.
\end{lemma}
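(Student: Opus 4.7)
The plan is to dualize the given Pfaff sequence and use the flatness of $\Sing(\dF)$ over $S$ to guarantee that the cokernel of the dual is flat and restricts correctly to the special fiber. First I would apply $\sHom(-,\OO_\X)$ to the Pfaff sequence: since $\X/S$ is smooth the sheaf $\Omega^1_{\X/S}$ is locally free, so $\sExt^i(\Omega^1_{\X/S},\OO_\X)=0$ for $i>0$ and the long exact sequence collapses to
\[
0\rightarrow T_{\dF/S}\rightarrow T_{\X/S}\rightarrow I_{\dF/S}^\vee\rightarrow \sExt^1\bigl(\Omega^{[1]}_{\dF/S},\OO_\X\bigr)\rightarrow 0,
\]
where $T_{\dF/S}:=\sHom(\Omega^{[1]}_{\dF/S},\OO_\X)$. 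I would then define $N_{\dF/S}$ as the image of $T_{\X/S}\to I_{\dF/S}^\vee$, which produces the candidate short exact sequence of the statement. Since $I_{\dF/S}$ is a rank-one saturated subsheaf of the locally free $\Omega^1_{\X/S}$ on the smooth integral scheme $\X$, its dual $I_{\dF/S}^\vee$ is a line bundle, and $N_{\dF/S}$ embeds into it, which gives torsionlessness for free.

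Next I would identify $N_{\dF/S}$ with an ideal sheaf in order to exploit the hypothesis. Trivializing $I_{\dF/S}$ locally and writing the inclusion into $\Omega^1_{\X/S}$ as a $1$-form $\omega=\sum f_i\,dx_i$, the dual map becomes the pairing $(g_1,\dots,g_n)\mapsto \sum f_ig_i$, whose image is the ideal $(f_1,\dots,f_n)$ defining $\Sing(\dF)$. Globalizing this yields $N_{\dF/S}\cong I_{\Sing(\dF)}\otimes I_{\dF/S}^\vee$. From the short exact sequence $0\rightarrow I_{\Sing(\dF)}\rightarrow \OO_\X\rightarrow \OO_{\Sing(\dF)}\rightarrow 0$ together with the flatness of $\OO_\X$ over $S$, the hypothesis that $\Sing(\dF)$ is flat over $S$ forces $I_{\Sing(\dF)}$ to be flat as well; tensoring with the line bundle $I_{\dF/S}^\vee$ gives the flatness of $N_{\dF/S}$, and hence also of $T_{\dF/S}$ via the short exact sequence.

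Finally I would verify the deformation property. Flatness of $\OO_{\Sing(\dF)}$ over $S$ implies $I_{\Sing(\dF)}|_s = I_{\Sing(\dF)_s}$, and the scheme $\Sing(\dF)_s$ agrees with $\Sing(\F)$ because the formation of the twisted $1$-form commutes with base change (here I would use that $I_{\dF/S}$ is actually a line bundle in a neighborhood of the special fiber, which follows since $I_\F$ is a line bundle on the smooth fiber $\X_s$ and $I_{\dF/S}$ is flat over $S$). Consequently $N_{\dF/S}|_s\cong I_{\Sing(\F)}\otimes I_\F^\vee\cong N_\F$, and flatness of the dual sequence then forces $T_{\dF/S}|_s\cong T_\F$. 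Involutivity of $T_{\dF/S}$ under the relative Lie bracket follows from the Frobenius condition on $I_{\dF/S}$: the $\OO_\X$-linear obstruction $T_{\dF/S}\otimes T_{\dF/S}\rightarrow N_{\dF/S}$ vanishes on the complement of $\Sing(\dF)$ by classical Frobenius duality, and hence everywhere since $N_{\dF/S}$ is torsion-free on the integral scheme $\X$. The main obstacle is the careful verification that the ideal $N_{\dF/S}\subseteq I_{\dF/S}^\vee$ cuts out the same scheme structure on $\Sing(\dF)$ as the twisted $1$-form, and that this identification is preserved under restriction to the special fiber; this is precisely where the flatness of $\Sing(\dF)$ over $S$ enters essentially.
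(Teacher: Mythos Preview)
Your proposal is correct and follows essentially the same route as the paper: the key identification $N_{\dF/S}\cong I_{\Sing(\dF)}\otimes I_{\dF/S}^\vee$ together with the flatness of $\Sing(\dF)$ to conclude $\bigl(I_{\Sing(\dF)}\bigr)\big|_{s}=I_{\Sing(\F)}$, and hence that $\bigl(N_{\dF/S}\bigr)\big|_{s}$ is torsion-free. The paper's argument is terser because flatness of $N_{\dF/S}$ over the smooth curve $S$ and involutivity of $T_{\dF/S}$ are already covered by the general discussion preceding the lemma, so your extra verifications are accurate but not strictly needed.
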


\begin{proof}
Our goal is to demonstrate that \smash{$\left.(N_{\dF/S})\right\vert_{s}$} is torsion-free. This follows immediately from the description \smash{$N_{\dF/S} = I_{\Sing(\dF)} \otimes I_{\dF/S}^\vee$}, as the flatness assumption guarantees that \smash{$\left.\big(I_{\Sing(\dF)}\big)\right\vert_{s} = I_{\Sing(\F)}$}.
\end{proof}

The applicability of this result relies on the existence of \textit{stable singularities} of foliations, which guarantee that the flatness condition on the scheme $\Sing(\dF)/S$ is fulfilled.

\begin{definition}
Given $\F$ a codimension one foliation on $X$, we say that a regular point $p\in X$ is a
\begin{enumerate}
    \item \textit{Morse singularity} if there are analytic coordinates $z_1,\dots,z_n$ around $p$ such that $\F$ is represented locally by the 1-form $\omega = d(z_1^2+\dots+z_n^2)$.
    \item \textit{Kupka singularity} if there is a $1$-form germ $\omega\in \Omega^1_{X,p}$ representing $\F$ such that $\omega_p = 0$ but $d\omega_p \neq 0$.
\end{enumerate}
\end{definition}

\begin{proposition}[{{\cite[p.24]{Quallbrunn_2015}}}]{\label{Kupka&Morse}}
Suppose that
\[
0\rightarrow I_{\dF/S}\rightarrow \Omega_{\X/S}^{[1]} \rightarrow \Omega^{[1]}_{\dF/S}\rightarrow 0
\]
is a deformation of a codimension one foliation $\F$. If $p$ is either a Morse or Kupka singularity of $\F$, then the singular locus $\Sing(\dF)$ is flat over $S$ at the point $p$.
\end{proposition}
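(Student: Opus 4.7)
The claim is local at $p$, so I work in a small neighborhood of $p$ in $\X$. Since $\X/S$ is locally trivial, choose coordinates $z_1,\dots,z_n$ of $X$ at $p$ and pull them back to $\X$. After trivializing $\det N_{\dF/S}$ near $p$, the twisted $1$-form defining the deformation takes the form
\[
\omega_S = \sum_{i=1}^n f_i(z,s)\, dz_i, \qquad f_i \in \OO_{\X,p},
\]
with $\omega_S|_{s=0} = \omega$ a local $1$-form defining $\F$. Locally $\Sing(\dF) = V(f_1,\dots,f_n)\subseteq \X$, and flatness over $S$ at $p$ amounts to flatness of $\OO_{\X,p}/(f_1,\dots,f_n)$ over $\OO_{S,0}$. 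Since $\OO_{\X,p}$ is already $\OO_{S,0}$-flat (locally trivial family), the local criterion of flatness reduces the problem to checking that the restrictions $\bar f_i\in \OO_{X,p}$ form a regular sequence.

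\textbf{Morse case.} In suitable coordinates $\omega = d(z_1^2+\cdots+z_n^2)$, so $\bar f_i = 2 z_i$, which is manifestly a regular sequence of length $n$. The criterion above applies, and in fact the implicit function theorem, applied to the system $f_i(z,s) = 0$ with invertible Jacobian at the origin, shows that $\Sing(\dF)$ is a local section of $\X\to S$, hence flat over $S$.

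\textbf{Kupka case.} Here $\omega_p = 0$ while $d\omega|_p \neq 0$. After a linear change of coordinates I may assume that the coefficient of $dz_1\wedge dz_2$ in $d\omega|_p$ is nonzero. Kupka's classical local structure theorem then provides coordinates near $p$ in which $\omega$ depends only on $z_1, z_2$, yielding
\[
\omega = A(z_1,z_2)\, dz_1 + B(z_1,z_2)\, dz_2,
\]
and hence $\bar f_j = 0$ for $j \geq 3$ while $\bar f_1 = A$, $\bar f_2 = B$. Torsion-freeness of $\Omega^{[1]}_\F$ implies that the zero locus of $\omega$ has codimension at least two in $X$, so $V(A,B)\subseteq \C^2$ cannot contain a divisor; this forces $\gcd(A,B) = 1$ in $\OO_{\C^2, 0}$, i.e.\ $(A,B)$ is a regular sequence in $\OO_{\C^2,0}$, and by flat base change in $\OO_{X,p}$. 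Since $(\bar f_1,\dots,\bar f_n) = (A,B)$, the flatness criterion applies.

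\textbf{Main obstacle.} The nontrivial input is Kupka's local structure theorem, which reduces $\omega$ to the two-variable normal form and underwrites the codimension-two assertion. If one wishes to avoid appealing to it, an alternative is to upgrade the argument to a parametrized version: establish a relative Kupka normal form producing coordinates on $\X/S$ in which $\omega_S = A(z_1,z_2,s)\,dz_1 + B(z_1,z_2,s)\,dz_2$, and then read off flatness directly from the fact that $(A,B)$ is a relative regular sequence, the codimension-two condition being open.
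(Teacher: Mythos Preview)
The paper does not prove this proposition; it is stated with a citation to \cite{Quallbrunn_2015} and no argument is given. So there is nothing to compare against, and the question is simply whether your proof stands on its own.

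Your Morse case is fine: the restrictions $\bar f_i = 2z_i$ form a regular sequence of maximal length, and the implicit-function-theorem remark is an even cleaner way to see that $\Sing(\dF)$ is a section of $\X\to S$ near $p$.

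The Kupka case has a genuine gap. The flatness criterion you invoke says: if $\OO_{\X,p}$ is $\OO_{S,0}$-flat and the images $\bar g_1,\dots,\bar g_r$ of elements $g_1,\dots,g_r\in\OO_{\X,p}$ form a regular sequence in $\OO_{X,p}$, then $\OO_{\X,p}/(g_1,\dots,g_r)$ is $\OO_{S,0}$-flat. This is a statement about a \emph{specific set of generators}, not about the ideal they generate. In your Kupka argument the generators are $f_1,\dots,f_n$, and their restrictions are $A,B,0,\dots,0$, which is \emph{not} a regular sequence. Knowing only that the ideal $(\bar f_1,\dots,\bar f_n)=(A,B)$ is generated by a regular sequence is not enough: take $A=k[[s]]$, $B=k[[x,y,s]]$, $I=(x,sy)$; then $\bar I=(x)$ is generated by a regular sequence, yet $B/I\simeq k[[y,s]]/(sy)$ has $s$-torsion.

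What is missing is the observation that integrability forces $(f_1,\dots,f_n)=(f_1,f_2)$ already in $\OO_{\X,p}$, not merely after restriction. From $\omega_S\wedge d\omega_S=0$ one extracts, for each $k\geqslant 3$,
\[
f_1\bigl(\partial_{z_2}f_k-\partial_{z_k}f_2\bigr)-f_2\bigl(\partial_{z_1}f_k-\partial_{z_k}f_1\bigr)+f_k\bigl(\partial_{z_1}f_2-\partial_{z_2}f_1\bigr)=0,
\]
and since $\partial_{z_1}f_2-\partial_{z_2}f_1$ is a unit at $p$ (this is exactly the Kupka hypothesis $d\omega|_p\neq 0$ after your linear change of coordinates), one solves $f_k\in(f_1,f_2)$. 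Now $(\bar f_1,\bar f_2)=(A,B)$ \emph{is} a regular sequence in $\OO_{X,p}$, so the criterion applies to the two generators $f_1,f_2$ and yields flatness of $\OO_{\X,p}/(f_1,f_2)=\OO_{\X,p}/(f_1,\dots,f_n)$. This is precisely the content of the ``relative Kupka normal form'' you mention in your final paragraph as an alternative; it is in fact required, not optional.
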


Later in Section 5 we will study the class of \textit{conic foliation singularities}, and we will prove that they are also stable as stated in the Theorem \ref{stabcones} of the Introduction. One of the motivations for their study is the structure of the singular locus of certain pullback foliations by a generic rational map $\pi:X\dashrightarrow \P^n$. Under good circumstances such foliations have only Morse, Kupka or conical singularities. This will allow us to use the Lemma \ref{dualizar} to analyze the stability of these foliations. 

Suppose that $\F$ is a codimension one foliation determined by a twisted $1$-form $\omega$ and $\beta: \operatorname{Bl}_p X\rightarrow X$ is the blow-up at a smooth point of $X$. We define the \textit{multiplicity} of $\F$ at $p$ to be the vanishing order $m_p(\F)$ of $\beta^\ast\omega$ along the exceptional divisor $E\subseteq Bl_p X$. Following the definitions we get:

\begin{lemma}{\label{def_blowup}}
Consider a deformation
\[
0\rightarrow I_{\dF/S}\rightarrow \Omega_{X\times S/S}^{[1]} \rightarrow \Omega^{[1]}_{\dF/S}\rightarrow 0
\]
of a codimension one foliation $\F$ with $S$ a smooth curve. Let $\beta: \Bl_pX\rightarrow X$ be the blow-up about a smooth point $p\in X$ such that the multiplicity $m_p(\dF_s)$ is independent of $s\in S$. Then $\beta^\ast (\dF /S)$ is a deformation of the blow-up $\beta^\ast\F$.
\end{lemma}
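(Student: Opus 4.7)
The plan is to carry out the blow-up pullback at the level of the defining twisted $1$-form. Set $\tilde\beta := \beta\times\mathrm{id}_S\colon \Bl_pX\times S\to X\times S$ and $\tilde E := E\times S$, and let $\tilde e$ denote a local equation of $\tilde E$ pulled back from a local equation $e$ of $E$. Near $p$ the family $\dF/S$ is described by a relative integrable twisted $1$-form $\omega$ generating $I_{\dF/S}$. The pullback family $\beta^{\ast}(\dF/S)$ will be defined by the twisted form $\tilde\eta := \tilde\beta^{\ast}\omega/\tilde e^{\,m}$, and the body of the proof consists in verifying the three requirements of Definition~\ref{def:def}: correct central fibre, relative integrability, and torsionless quotient that is flat over $S$.

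The key step is to show that $\tilde\beta^{\ast}\omega$ vanishes along $\tilde E$ to order exactly $m$. Write $\tilde\beta^{\ast}\omega = \tilde e^{\,k}\,\tilde\eta$ with $k$ maximal, so that $\tilde\eta$ does not vanish identically on $\tilde E$. Restricting to the fibre over a general $s\in S$ gives $\beta^{\ast}\omega_s = e^{k}\,(\tilde\eta|_s)$, which has vanishing order $k$ along $E$ because $\tilde\eta|_s$ is generically non-zero on $E$ for $s$ in a dense open set of $S$. By hypothesis this order equals $m_p(\dF_s) = m$, and hence $k = m$. In particular $\tilde\eta|_0 = \beta^{\ast}\omega_0/e^{\,m}$ is by definition the defining form of $\beta^{\ast}\F$, giving the required central fibre. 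Relative integrability of $\tilde\eta$ follows from that of $\omega$: by the Leibniz rule and $\tilde\eta\wedge\tilde\eta = 0$,
\[
\tilde e^{\,2m}\,d_{\mathrm{rel}}\tilde\eta\wedge\tilde\eta \;=\; d_{\mathrm{rel}}(\tilde e^{\,m}\tilde\eta)\wedge(\tilde e^{\,m}\tilde\eta) \;=\; \tilde\beta^{\ast}(d_{\mathrm{rel}}\omega\wedge\omega) \;=\; 0,
\]
and $\tilde e$ is a non-zero-divisor on the smooth scheme $\Bl_pX\times S$.

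For the torsionless and flatness condition, I would take $I_{\beta^{\ast}\dF/S}$ to be the subsheaf of $\Omega^{[1]}_{\Bl_pX\times S/S}$ generated by $\tilde\eta$. This subsheaf is a line bundle because $\tilde\eta$ has no divisorial zeros: along $\tilde E$ this is the content of the identity $k = m$ established above, while away from $\tilde E$ the morphism $\tilde\beta$ is an isomorphism and $\tilde\eta$ agrees locally with $\omega$, which has no divisorial zeros since $\Omega^{[1]}_{\dF/S}$ is torsionless. Therefore $\Omega^{[1]}_{\beta^{\ast}\dF/S}$ is torsion-free on the integral scheme $\Bl_pX\times S$; on the smooth curve $S$ this implies $\OO_S$-flatness, so $\beta^{\ast}(\dF/S)$ is genuinely a flat family in the sense of Definition~\ref{def:family}. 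The main obstacle is precisely the identity $k = m$: without the constant-multiplicity hypothesis $k$ may be strictly smaller than $m$ at the central fibre, causing $\tilde e^{-m}\tilde\beta^{\ast}\omega$ to fail to be a regular section and introducing $\OO_S$-torsion in $\Omega^{[1]}_{\beta^{\ast}\dF/S}$ supported on $\tilde E$.
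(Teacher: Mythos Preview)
Your proof is correct and is precisely the unpacking the paper intends: the paper offers no argument beyond the phrase ``Following the definitions we get'', and your computation with the twisted $1$-form, the vanishing order along $\tilde E$, and the torsion-free/flatness check over the smooth curve $S$ is exactly how one verifies this from Definitions~\ref{def:family} and~\ref{def:def}. The constant-multiplicity hypothesis is used exactly where you say, to force $k=m$ and hence to ensure that $\tilde\eta|_0$ has no divisorial zero along $E$.
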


\subsection{Obstruction theory of local rings} We recall other basic notions on deformation theory. We follow the exposition in \cite{sernesi2007deformations} closely.

\begin{definition} Let $S\rightarrow A$ be a morphism of commutative rings and $I$ be an $A$-module. An \textit{extension} of the $S$-algebra $A$ by $I$ consists of an exact sequence of abelian groups \[ 0\rightarrow I \rightarrow A'\rightarrow A \rightarrow 0 \] such that the arrow on the right is a morphism of $S$-algebras such that $I^2 = 0$.
\end{definition}

Two extensions $A'$ and $A''$ are \textit{isomorphic} if there exists an isomorphism $A'\rightarrow A''$ of $S$-algebras that commutes the diagram
\[
\begin{tikzcd}
0 \arrow[r] & I \arrow[r] \arrow[d,equal] & A' \arrow[r] \arrow[d] & A \arrow[r] \arrow[d, equal] & 0 \\
0 \arrow[r] & I \arrow[r] & A'' \arrow[r] & A \arrow[r] & 0.
\end{tikzcd}
\]
We will denote the set of equivalence classes of such extensions by $\Ex(A/S,I)$ which has a natural structure of $A$-module \cite[Section 1.1.2]{sernesi2007deformations}. We define the \textit{pullback} of an extension $0\rightarrow I \rightarrow A'\rightarrow A\rightarrow 0$ by a morphism of $S$-algebras $B\rightarrow A$ as the extension of $B$ by $I$
\[
0\rightarrow I \rightarrow A'\times_A B \rightarrow B \rightarrow 0.
\]
This construction preserves equivalences and induces a function $\Ex(A/S,I)\rightarrow \Ex(B/S,I)$. Furthermore, it can be proved that the set on the right is an $A$-module and that this map is a morphism of $A$-modules.

\begin{proposition}[{{\cite[Prop. 1.1.5]{sernesi2007deformations}}}]{\label{T_sec}} Given a morphism of $S$-algebras $B\rightarrow A$ and an $A$-module $I$ there exists a long exact sequence of $A$-modules 
\[ 
\begin{tikzcd}[row sep = tiny, column sep = small] 
0 \arrow[r] & {\Der(B,I)} \arrow[r] & {\Der(A/S,I)} \arrow[r] & {\Der(B/S,I)} \arrow[r] & {} \\ {} \arrow[r] & {\Ex(A/B,I)} \arrow[r] & {\Ex(A/S,I)} \arrow[r] & {\Ex(B/S,I)}. &
\end{tikzcd}
\]
\end{proposition}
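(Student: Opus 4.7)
The plan is to follow the standard construction of the six-term exact sequence of derivations and first-order extensions attached to a morphism of rings $B \to A$ with coefficients in an $A$-module $I$. The first three terms arise formally from the left-exactness of $\Der(-/S, I)$, while the last three terms and the connecting homomorphism will be produced by explicit manipulations of the trivial square-zero extension $A \oplus I$, exploiting the standard dictionary between sections, splittings and derivations.

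First I would verify the easy portion. The map $\Der(A/B, I) \hookrightarrow \Der(A/S, I)$ is injective because a $B$-linear $S$-derivation is a fortiori an $S$-derivation, and exactness at $\Der(A/S, I)$ is immediate: an $S$-derivation $d: A \to I$ is $B$-linear precisely when its restriction $B \to A \xrightarrow{d} I$ vanishes. Next I would construct the connecting map $\delta : \Der(B/S, I) \to \Ex(A/B, I)$. Given an $S$-derivation $d : B \to I$, consider the trivial $S$-algebra extension $A \oplus I$ of $A$ by $I$ and twist its $B$-algebra structure via $b \mapsto (b, d(b))$. The derivation property of $d$ ensures that this twisted map is an $S$-algebra morphism compatible with $B \to A$, so it promotes $A \oplus I$ to a $B$-algebra extension of $A$. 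A short check shows that this assignment descends to equivalence classes and is $A$-linear.

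With $\delta$ in hand, I would verify the remaining instances of exactness. At $\Der(B/S, I)$: the twisted extension $\delta(d)$ is trivial as a $B$-algebra extension exactly when there exists a compatible $B$-algebra section $A \to A \oplus I$, which is the same datum as an $S$-derivation $A \to I$ extending $d$. At $\Ex(A/B, I)$: an extension $0 \to I \to A' \to A \to 0$ becomes trivial in $\Ex(A/S, I)$ iff it admits an $S$-algebra section $\sigma : A \to A'$; the difference between $\sigma|_B$ and the given $B$-structure $B \to A'$ is then an $S$-derivation $d : B \to I$ with $\delta(d) = [A']$. At $\Ex(A/S, I)$: the pullback $A' \times_A B$ of an $S$-algebra extension of $A$ along $B \to A$ is trivial as a $B$-algebra extension iff there is a $B$-algebra lift $B \to A'$, equivalently iff $A'$ is already the image of a $B$-algebra extension in $\Ex(A/B, I)$.

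The main delicate point will be checking that $\delta$ is well-defined on equivalence classes and that the verification of exactness at $\Ex(A/B, I)$ correctly matches the two descriptions of ``extensions that split as $S$-algebra extensions but carry a possibly non-trivial $B$-structure''. Once these natural identifications are set up, each exactness statement reduces to a short manipulation with square-zero extensions, so the argument is essentially mechanical.
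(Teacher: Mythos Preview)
The paper does not give its own proof of this proposition: it is quoted verbatim as \cite[Prop.~1.1.5]{sernesi2007deformations} and used as a black box. Your argument is the standard one (and is essentially the proof given in Sernesi's book), so there is nothing to compare against. Two minor remarks: first, the paper's statement has a typo in the first nonzero term, writing $\Der(B,I)$ where $\Der(A/B,I)$ is meant---you silently corrected this, which is fine; second, your phrase ``a $B$-linear $S$-derivation is a fortiori an $S$-derivation'' is backwards as written (a $B$-derivation is automatically an $S$-derivation because $S\to B\to A$), but the intended meaning is clear and the verification of exactness at each spot is correct.
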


\begin{definition}
Let $S\rightarrow A$ be a morphism between $k$ local Noetherian algebras with residual field $k$. The \textit{obstruction space} of the $S$-algebra $A$ is
\[
o(A/S) := \Ex(A/S,k).
\]
\end{definition}

When $S = k$ we will denote this space simply as $o(A)$. In this local setting, if we specialize Proposition \ref{T_sec} to the case $I = k$ we obtain a long exact sequence of the form
\begin{equation*}
\begin{tikzcd}
0 \arrow[r] & T_{A/B} \arrow[r] & T_{A/S} \arrow[r] & T_{B/S} \arrow[r] & o(A/B) \arrow[r]& o(A/S) \arrow[r] & o(B/S).
\end{tikzcd}
\end{equation*}

\begin{theorem}[{{\cite[Thm 2.1.5]{sernesi2007deformations}}}]{\label{teo_obstrucciones}}
Consider an algebraically closed field $k$ and a morphism $S\rightarrow A$ between $k$-local Noetherian algebras with residual field $k$. If $S$ is a complete $k$-algebra, they are equivalent:
\begin{enumerate}[label = (\alph*)]
\item $A$ is a formally smooth $S$-algebra.
\item $T_{A}\rightarrow T_{S}$ is surjective and $o(A)\rightarrow o(S)$ is injective.
\item $o(A/S) = 0$.
\end{enumerate}
\end{theorem}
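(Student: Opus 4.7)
The plan is to prove the three-way equivalence by exploiting the long exact sequence of Proposition~\ref{T_sec}, handling (b) $\Leftrightarrow$ (c) by a diagram chase and reserving the substantive work for (a) $\Leftrightarrow$ (c). Recall that $T_A = \Der(A, k)$ and $o(A) = \Ex(A/k, k)$ are the usual tangent and obstruction spaces.

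First I would specialize Proposition~\ref{T_sec} to the chain $k \to S \to A$ with coefficients $I = k$, producing
\[
0 \to T_{A/S} \to T_A \to T_S \xrightarrow{\delta} o(A/S) \to o(A) \to o(S).
\]
Surjectivity of $T_A \to T_S$ is equivalent to $\delta = 0$, which by exactness at $o(A/S)$ is equivalent to $o(A/S) \hookrightarrow o(A)$; combined with injectivity of $o(A) \to o(S)$ this forces $o(A/S) = 0$. The converse is the same chase read backwards: if $o(A/S) = 0$, then $\delta = 0$ yields surjectivity of $T_A \to T_S$, and exactness at $o(A)$ gives injectivity of $o(A) \to o(S)$. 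This settles (b) $\Leftrightarrow$ (c).

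For (a) $\Rightarrow$ (c), any class in $o(A/S) = \Ex(A/S, k)$ is represented by a square-zero extension $0 \to k \to A' \to A \to 0$ of $S$-algebras. Formal smoothness applied to the identity $\operatorname{id}_A \colon A \to A'/k = A$ yields a section $A \to A'$ splitting the extension, so the class vanishes.

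The core step is (c) $\Rightarrow$ (a). Given a surjection of $S$-algebras $B \to B/J$ with $J$ a nilpotent ideal and an $S$-morphism $f \colon A \to B/J$, I want a lift $A \to B$. The nilpotence of $J$ allows me to filter $B = B/J^n \to B/J^{n-1} \to \cdots \to B/J$ into successive square-zero extensions, reducing to the case $J^2 = 0$, where $J$ carries an $A$-module structure via $f$. In that case the set of lifts of $f$ is a torsor under $\Der(A/S, J)$, and the obstruction to the existence of a lift lives in $\Ex(A/S, J)$. By a dévissage along an increasing filtration of $J$ with successive quotients annihilated by $\m_A$ (so isomorphic to copies of $k$), together with the long exact sequence of $\Ex(A/S, -)$ in the coefficient variable, the vanishing $\Ex(A/S, k) = 0$ propagates to $\Ex(A/S, J) = 0$. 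Completeness of $S$ together with Noetherianity of $A$ is then used to glue the successive finite-order lifts into a genuine $S$-algebra morphism $A \to B$.

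The main obstacle I expect is precisely the last dévissage: one has to verify that $\Ex(A/S, -)$ behaves as a $\delta$-functor in its coefficient variable on $A$-modules of finite length, and that the inverse system of partial lifts produced along the filtration converges. This is exactly where the completeness hypothesis on $S$ (and the Noetherianity built into the standing assumptions) do the real work; the rest of the argument is formal bookkeeping with the long exact sequence.
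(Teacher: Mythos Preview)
The paper does not supply a proof of this statement; it is quoted from Sernesi with a citation and used as a black box. So there is nothing in the paper to compare your argument against, and I will simply assess the proposal on its own.

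Your derivation of (b) $\Leftrightarrow$ (c) from the six-term sequence is correct, as is (a) $\Rightarrow$ (c). For (c) $\Rightarrow$ (a) the skeleton is also right: the obstruction to lifting $f$ through a square-zero surjection $B\to B/J$ is the class of the pulled-back extension in $\Ex(A/S,J)$, and when $J$ has finite length one reduces to $\Ex(A/S,k)=0$ by filtering $J$ and using that $\Ex(A/S,-)$ sits in a long exact sequence in the module variable. But your closing remarks misidentify where the remaining work lies. In Sernesi's setting formal smoothness is tested against local Artinian $S$-algebras with residue field $k$, so $J$ is automatically finite-dimensional over $k$; the filtration is then \emph{finite} and there is no inverse system and no convergence issue whatsoever. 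Your sentence about ``gluing successive finite-order lifts'' using completeness of $S$ does not correspond to any step that actually occurs in that argument. If instead you intend the EGA notion of formal smoothness (arbitrary nilpotent thickenings of arbitrary $S$-algebras), then the d\'evissage breaks down because $J$ need not have finite length, and a genuinely different argument is required; completeness of $S$ would enter through the structure theory of complete local rings rather than through a limit of partial lifts inside a single square-zero step. Either way, you should state precisely which lifting problem defines ``formally smooth'' here and excise the spurious convergence step.
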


Finally, we state a technical result that will be used later.

\begin{lemma}{\label{lema_obstrucciones}}
Let $S\rightarrow B \rightarrow A$ be two morphisms between $k$-local Noetherian algebras with residual field $k$. If $o(A/S) \rightarrow o(B/S)$ is injective, then $o(A)\rightarrow o(B)$ is also injective.
\end{lemma}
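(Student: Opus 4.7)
The plan is to apply Proposition~\ref{T_sec} twice — once to the chain $S\to B\to A$ and once to the chain $k\to B\to A$ — and to chase the resulting commutative ladder. Specializing the coefficient module to $I=k$ in both applications gives a commutative diagram with exact rows
\[
\begin{tikzcd}[column sep=small]
T_{B/S} \arrow[r,"\delta_S"] \arrow[d] & o(A/B) \arrow[r,"f_S"] \arrow[d,equal] & o(A/S) \arrow[r,"g_S"] \arrow[d] & o(B/S) \arrow[d] \\
T_{B} \arrow[r,"\delta"] & o(A/B) \arrow[r,"f"] & o(A) \arrow[r,"g"] & o(B),
\end{tikzcd}
\]
whose vertical arrows are induced by forgetting an $S$-structure in favor of its underlying $k$-structure. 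The central column is literally the identity, because $\Ex(A/B,k)$ depends only on the $B$-algebra structure of $A$ and is insensitive to whether one chooses $S$ or $k$ as the external base.

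From the hypothesis that $g_S$ is injective, exactness of the top row at $o(A/S)$ yields $f_S=0$, and then exactness at $o(A/B)$ forces the connecting homomorphism $\delta_S$ to be surjective. Commutativity of the left square then implies $\delta(T_B)\supseteq \delta_S(T_{B/S})=o(A/B)$, so that $\delta$ is surjective as well. Exactness of the bottom row at $o(A/B)$ now gives $f=0$, and exactness at $o(A)$ finally yields that $g\colon o(A)\to o(B)$ is injective, which is the desired conclusion.

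The only delicate point of this strategy is the commutativity of the left square, namely the compatibility of the connecting maps $\delta_S$ and $\delta$ under the inclusion $T_{B/S}\hookrightarrow T_B$. I do not expect this to be a real obstacle: it is a formal consequence of the naturality of the long exact sequence of Proposition~\ref{T_sec} in the base ring, and can be read off directly from the standard construction of $\delta$ as the obstruction class attached to the trivial square-zero extension $B\oplus k$ associated to a derivation $D\colon B\to k$ — a construction that makes no reference to $S$ beyond the requirement that $D$ vanish on its image.
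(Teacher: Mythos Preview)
Your argument is correct, but it runs orthogonally to the paper's. The paper applies Proposition~\ref{T_sec} to the two towers $k\to S\to A$ and $k\to S\to B$, obtaining a ladder
\[
\begin{tikzcd}[column sep=small]
0 \arrow[r] & T_{A/S} \arrow[r] \arrow[d] & T_{A} \arrow[r] \arrow[d] & T_{S} \arrow[r] \arrow[d,equal] & o(A/S) \arrow[r] \arrow[d] & o(A) \arrow[r] \arrow[d] & o(S) \arrow[d,equal] \\
0 \arrow[r] & T_{B/S} \arrow[r] & T_B \arrow[r] & T_S \arrow[r] & o(B/S) \arrow[r] & o(B) \arrow[r] & o(S)
\end{tikzcd}
\]
whose vertical maps are induced by $B\to A$, and then chases an element of $\ker\big(o(A)\to o(B)\big)$ around the right-hand portion of the diagram. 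You instead fix the top pair $B\to A$ and vary the base between $S$ and $k$, producing a shorter ladder whose common column is $o(A/B)$. Both chases are straightforward once the diagram is set up; the paper's version has the minor advantage that its equal columns ($T_S$ and $o(S)$) are visibly identical, whereas yours requires the remark---which you correctly supply---that $\Ex(A/B,k)$ is insensitive to the ambient base and that the connecting map $\delta$ is natural in that base. Your route is a touch more economical, since surjectivity of $\delta_S$ immediately propagates to surjectivity of $\delta$ and kills $f$ in one stroke; the paper's chase involves lifting through $T_S$ and invoking the injectivity hypothesis at the end rather than the beginning.
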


\begin{proof} This follows by chasing elements in the commutative diagram
\begin{equation*}
\begin{tikzcd}
0 \arrow[r] & T_{A/S} \arrow[r] \arrow[d] & T_{A} \arrow[r] \arrow[d] & T_{S} \arrow[r] \arrow [equal]{d} & o(A/S) \arrow[r] \arrow[d] & o(A) \arrow[r] \arrow[d] & o(S) \arrow[equal]{d } \\
0 \arrow[r] & T_{B/S} \arrow[r] & T_B \arrow[r] & T_S \arrow[r] & o(B/S) \arrow[r] & o(B) \arrow[r] & o(S).
\end{tikzcd}
\end{equation*}
\end{proof}

\section{Deformations of flags of sheaves}

In this section we present Grothendieck's Drapeaux construction, the moduli space of flags of a given sheaf. We closely follow \cite{drezet1985fibres}, were the local properties of these schemes were stablished. Our focus is on the case of flags of length three.\\

\smallskip

Let $k$ be an algebraically closed field, $X$ a $k$-scheme of finite type and $K$ an upper-bounded complex of $\OO_X$-modules provided with a flag of subcomplexes
\[
0 = F_0K \subseteq F_1K \subseteq \cdots \subseteq F_{\ell-1}K\subseteq F_{\ell}K = K.
\]
In the following we will denote by $\operatorname{gr}_iK = F_iK/F_{i-1}K$ the i-th graded component of $K$. Given another complex $L$ as above, we can construct a new complex $\Hom^\bullet(K,L)$ whose $q$-th homogeneous component is
\[
\Hom^{q}(K,L) := \prod_{i\in\Z}\Hom(K^i,L^{i+q})
\]
and its differential is defined as $d^q(f)^i = d^{q+i}_{Y}\circ f^i + (-1)^qf^{i+1}\circ d_X^i$. The complex $\Hom^\bullet(K,L)$ is equipped with the flag
\[
F_p\Hom(K,L) := \big\{f\in \Hom(K,L) \,:\; f(F_iK) \subseteq F_{i+p}L\;\; \forall i\in\Z\big\}.
\]

On the other hand we define the complex of $\OO_X$-modules
\[
\Hom_+^\bullet(K,L) := \Hom^\bullet(K,L) / F_0\Hom^\bullet(K,L).
\]

\noindent It can be proved that there exists a filtered complex $I$ whose homogeneous components are injective $\OO_X$-modules and that there exists a morphism $L\rightarrow I$ of filtered complexes such that $\operatorname{gr}_i L \rightarrow \operatorname{gr}_i I$ is an injective resolution for all $i\in\Z$.

\begin{definition}
Given a resolution $L\rightarrow I$ as in the previous paragraph, we define the groups
\[
\Ext^i_{+}(K,L) := H^i\big(\Hom_{+}^\bullet(K,I)\big)
\]
which are independent of the chosen resolution.
\end{definition}

\begin{theorem}[{{\cite[p.200]{drezet1985fibres}}}]{\label{spectral_sequence}} There is a spectral sequence with first page  
\[ 
E^{p,q}_1 = \begin{cases} \prod_{i\in \Z}\Ext^{p+q}\big(\operatorname{gr}_{i}(K),\operatorname{gr}_{i-p}(L)\big) \quad & \text{if}\;\; p<0,\\[7pt] 0 \quad &\text{if}\;\; p\geq 0 \end{cases} 
\]
that converges to $\Ext^\bullet_+(K,L)$.
\end{theorem}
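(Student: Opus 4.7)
The plan is to realize the claimed spectral sequence as the one associated to the filtered complex $\Hom^\bullet_+(K,I)$, where $L\to I$ is the filtered injective resolution recalled immediately before the statement. The first key step is to identify the graded pieces of the filtration $F_p\Hom^\bullet(K,L)$ before resolving. Because both filtrations on $K$ and $L$ have finite length $\ell$, this filtration is bounded, and a direct diagram chase in the category of filtered complexes yields a canonical isomorphism
\[
F_p\Hom^\bullet(K,L)/F_{p-1}\Hom^\bullet(K,L) \;\simeq\; \prod_{i\in\Z}\Hom^\bullet\bigl(\operatorname{gr}_i K,\operatorname{gr}_{i+p} L\bigr),
\]
reflecting that a morphism shifting the filtration by exactly $p$ is determined, modulo $F_{p-1}$, by its effects on successive quotients. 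First I would carry out this identification carefully, as it is where the combinatorics of the double index $(i,i+p)$ gets fixed.

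Next I would replace $L$ by $I$ and pass to the quotient $\Hom^\bullet_+(K,I)=\Hom^\bullet(K,I)/F_0\Hom^\bullet(K,I)$. The hypothesis that each $\operatorname{gr}_i L \to \operatorname{gr}_i I$ is an injective resolution guarantees that
\[
H^{n}\bigl(\Hom^\bullet(\operatorname{gr}_i K,\operatorname{gr}_{i+p} I)\bigr) \;\simeq\; \Ext^{n}\bigl(\operatorname{gr}_i K,\operatorname{gr}_{i+p} L\bigr),
\]
so the graded pieces of the filtered complex $\Hom^\bullet_+(K,I)$ compute exactly the desired Ext groups. Quotienting by $F_0$ eliminates precisely the pieces with $p\leq 0$. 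Applying the standard spectral sequence of a filtered complex with the descending-filtration convention $F^p := F_{-p}$ (which converts the ascending filtration to a descending one by negating the index) produces the first page
\[
E_1^{p,q} \;=\; H^{p+q}\bigl(\operatorname{gr}^p\Hom^\bullet_+(K,I)\bigr) \;=\; \prod_{i\in\Z}\Ext^{p+q}\bigl(\operatorname{gr}_i K,\operatorname{gr}_{i-p} L\bigr)
\]
for $p<0$ and $0$ otherwise, converging to $H^\bullet(\Hom^\bullet_+(K,I))=\Ext^\bullet_+(K,L)$.

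Convergence itself presents no difficulty: since both ambient filtrations are of finite length $\ell$, the filtration on $\Hom^\bullet_+(K,I)$ is bounded, hence the spectral sequence is regular and converges strongly to $\Ext^\bullet_+(K,L)$. The main obstacle I foresee is verifying that the $d_1$ differential really agrees with the one induced by the injective resolution, so that the $E_1$ entries compute honest Ext groups rather than the cohomology of the graded pieces of a mere double complex. This ultimately comes down to tracing a connecting homomorphism through the short exact sequence
\[
0 \to \operatorname{gr}_{i+p-1} I \to F_{i+p} I / F_{i+p-2} I \to \operatorname{gr}_{i+p} I \to 0,
\]
a bookkeeping argument that must be executed carefully both to match the sign convention of \cite{drezet1985fibres} and to justify the precise indexing range $p<0$ in which the $E_1$ terms are nonzero.
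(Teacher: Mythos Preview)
The paper does not supply its own proof of this statement: it is quoted verbatim as a result of Drezet \cite[p.~200]{drezet1985fibres} and used as a black box, so there is no in-paper argument to compare against. Your outline is the standard derivation (and essentially what one finds in Drezet): identify $\operatorname{gr}_p\Hom^\bullet(K,I)\simeq\prod_i\Hom^\bullet(\operatorname{gr}_iK,\operatorname{gr}_{i+p}I)$, use that $\operatorname{gr}_iL\to\operatorname{gr}_iI$ is an injective resolution to turn cohomology of the graded pieces into $\Ext$ groups, pass to the quotient by $F_0$ to kill the $p\geq 0$ columns, and run the bounded-filtration spectral sequence after the reindexing $F^p:=F_{-p}$. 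One small wording issue: when you write ``quotienting by $F_0$ eliminates precisely the pieces with $p\leq 0$'', that $p$ is the \emph{ascending} index, so after your sign flip these are the pieces with spectral-sequence index $p\geq 0$; the conclusion you draw is correct, but it would read more cleanly if you kept the two indexings separate.
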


\begin{definition}
Let $\X$ be an $S$-projective variety with an $S$-ample line bundle $\OO_\X(1)$. Take a coherent bundle $\E$ on $\X$ that is flat over $S$ and whose Hilbert polynomial associated with $\OO_\X(1)$ is $h$. Given a $\ell$-tuple of rational polynomials $H=(h_1,\dots,h_\ell)$ such that $h = h_1+\cdots + h_\ell$ we define the contravariant functor
\[
\fDrap^{H}_{E} : \operatorname{Sch_{/S}} \rightarrow \operatorname{Set}
\]
whose value at $T\rightarrow S$ is the set of flags of $\OO_{X\times_S T}$-modules
\[
0 = F_0 \subseteq F_1 \subseteq \cdots \subseteq F_{\ell-1}\subseteq F_{\ell} = \E_T = \E\otimes_{\OO_S}\OO_T
\]
that satisfy the following conditions:
\begin{itemize}
\item[$\bullet$] the sheaves $\operatorname{gr}_i\E_T = F_i/F_{i-1}$ are $T$-flat,
\item[$\bullet$] for all $t\in T$, the Hilbert polynomial of $(\operatorname{gr}_i\E_T)|_{X_t}$ is equal to $h_i$.
\end{itemize}
\end{definition}

\begin{remark}
In the particular case where the flags have length $\ell=2$, the functor $\fDrap^{H}_\E$ is just the Grothendieck functor $\fQuot^{h_2}_\E$.
\end{remark}

The functor $\fDrap_{\E}^H$ is represented by an $S$-projective scheme which we will denote by \smash{$\Drap_{\E}^H$} \cite{grothendieck1960techniques}. As usual, it will be convenient to consider the scheme
\[
\Drap^\ell_\E := \bigsqcup_{H} \Drap^{H}_\E
\]
where $H= (h_1,\dots,h_\ell)$ varies among those $\ell$-tuples of polynomials.

In general, if $X\rightarrow S$ is a morphism of schemes and $p\in X$ is a point on $s\in S$, we define the \textit{obstruction space} of $X$ at $p$ relative to $S$ as
\[
o_p(X/S) := o\big(\widehat{\OO}_{X,p}/\widehat{\OO}_{S,s}\big).
\]
Following the results \cite[Prop. 1.5]{drezet1985fibres}, \cite[Prop. 4.4.4]{sernesi2007deformations} and \cite[Prop. 2.1.7]{sernesi2007deformations} we can describe the local properties of the scheme \smash{$\Drap^\ell_{\E}$} as follows:

\begin{proposition}{\label{suc_drap_rel}}
Suppose that $\X$ is a $S$-projective variety and $\E$ is a coherent sheaf on $\X$ that is flat over $S$. Given a flag \smash{$p\in \Drap^\ell_\E$} of the sheaf \smash{$E = \E|_s$} we have an exact sequence
\[
0 \rightarrow \Ext^0_{+}(E,E) \rightarrow T_p\Drap^\ell_{\E} \rightarrow T_sS \rightarrow \Ext^1_{+}(E,E).
\] Additionally, there is an inclusion of the obstruction space \smash{$o_p(\Drap^\ell_\E/S)\subseteq \Ext^1_{+}(E,E)$}. In the particular case $S = \Spec k$, the sequence reduces to an isomorphism \smash{$T_p\Drap^\ell_{E} = \Ext^{0}_{+}(E,E)$}.
\end{proposition}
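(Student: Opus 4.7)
The plan is to bootstrap from the absolute case to the relative one, using the general tangent-obstruction formalism for a morphism of schemes applied to the structure map $\Drap^\ell_\E \to S$, and to identify the resulting tangent/obstruction spaces via the spectral sequence of Theorem~\ref{spectral_sequence}.

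First I would treat the absolute case $S = \Spec k$. A tangent vector at $p = [F_\bullet]$ is a $k[\epsilon]$-flag $\tilde F_\bullet$ lifting $F_\bullet$ with flat graded pieces. Each inclusion $\tilde F_i \hookrightarrow E \otimes k[\epsilon]$ is a first-order deformation of the subsheaf $F_i \subseteq E$, which by the standard Quot-scheme calculation is classified by an element $\phi_i \in \Hom(F_i, E/F_i)$. The compatibility $\tilde F_{i-1} \subseteq \tilde F_i$ forces the $\phi_i$ to assemble into a single morphism $\phi \in \Hom(E,E)$ that shifts the filtration down by at most one, modulo those that preserve it, i.e. into a class in $\Hom(E,E)/F_0\Hom(E,E) = \Ext^0_+(E,E)$. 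Reading this off the $E_1$ page of Theorem~\ref{spectral_sequence}, where $E_1^{p,q}$ contributes $\prod_i \Hom(\gr_i E, \gr_{i-p} E)$ in total degree zero with $p \leq -1$, matches this description.

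Next, for general $S$, I invoke the standard tangent-obstruction exact sequence \cite[Prop. 2.1.7]{sernesi2007deformations} associated with the morphism $\Drap^\ell_\E \to S$, whose fiber over $s$ is precisely $\Drap^\ell_{E}$ for $E = \E|_s$, since both the flag condition and the graded-pieces-flat condition are intrinsic to the fiber. This yields
\[
0 \to T_p(\Drap^\ell_\E)_s \to T_p \Drap^\ell_\E \to T_s S \to o_p\bigl((\Drap^\ell_\E)_s\bigr),
\]
and substituting the identifications from the previous paragraph gives the claimed four-term sequence. For the inclusion of the relative obstruction space into $\Ext^1_+(E,E)$, I would argue as for the tangent space: given an extension $0 \to k \to A' \to A \to 0$ of Artinian local $S$-algebras and a flag $\tilde F_\bullet$ over $A$, the obstruction to lifting each inclusion $\tilde F_i \subseteq \E_A$ lives in $\Ext^1(F_i, E/F_i)$, and the obstruction to making the lifts compatible into a flag is captured by passing to the filtered Ext, i.e. to $\Ext^1_+(E,E)$, which is again visible on the $E_1$ page of the spectral sequence.

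The main technical hurdle is the bookkeeping of compatibility conditions between the deformations of consecutive $F_i$'s, and the corresponding verification that the obstruction classes are precisely killed by elements of $\Ext^1_+$. The cleanest way I see is to work with a filtered injective resolution $L \to I$ of $E$ as in the definition of $\Ext^i_+$ and translate the deformation problem for $\tilde F_\bullet$ into a cocycle problem in $\Hom^\bullet_+(E, I)$; the needed statements then reduce to standard $\delta$-functor manipulations, after which the spectral sequence and the cited results \cite[Prop. 1.5]{drezet1985fibres}, \cite[Prop. 4.4.4]{sernesi2007deformations} give the bounds on tangent and obstruction spaces almost formally.
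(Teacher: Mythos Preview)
Your overall route matches the paper's, which does not give a detailed argument but simply records the proposition as following from \cite[Prop.~1.5]{drezet1985fibres} together with \cite[Prop.~4.4.4]{sernesi2007deformations} and \cite[Prop.~2.1.7]{sernesi2007deformations}. Your final paragraph, reducing everything to a cocycle problem in $\Hom^\bullet_+(E,I)$ via a filtered injective resolution and then invoking exactly these references, is the correct outline.

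However, the informal computation in your first paragraph is wrong and should be discarded rather than polished. You claim that the compatible tuple $(\phi_i)$ with $\phi_i \in \Hom(F_i,E/F_i)$ ``assembles into a single morphism $\phi \in \Hom(E,E)$,'' and you identify $\Ext^0_+(E,E)$ with $\Hom(E,E)/F_0\Hom(E,E)$. Neither holds. Already for $\ell = 2$ one has $\Ext^0_+(E,E) = \Hom(F_1,E/F_1)$ (the spectral sequence has a single column), and a general element there need not extend to an endomorphism of $E$. More structurally, the short exact sequence of complexes $0 \to F_0\Hom^\bullet(E,I) \to \Hom^\bullet(E,I) \to \Hom^\bullet_+(E,I) \to 0$ gives
\[
0 \longrightarrow \End(E)/F_0\End(E) \longrightarrow \Ext^0_+(E,E) \longrightarrow H^1\bigl(F_0\Hom^\bullet(E,I)\bigr),
\]
and the rightmost term is typically nonzero. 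So the identification of the tangent space genuinely requires the filtered injective resolution you invoke later; the shortcut through global endomorphisms of $E$ does not produce all tangent vectors. The same caveat applies to your phrase ``shifts the filtration down by at most one'': the quotient $\Hom/F_0\Hom$ sees all negative shifts, not just $F_{-1}/F_0$.
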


\subsection{Deformations of flags of length 3} Given a $S$-scheme $T$ and an integer $1\leqslant i\leqslant\ell-1$ there is a map $\varphi_i: \fDrap^\ell_{\E}(T) \rightarrow \fQuot_{\E}(T)$ which assigns to a flag $F_1\subseteq\cdots\subseteq F_{\ell-1} \subseteq \E_T$ the inclusion of the $i$-th subsheaf $F_i\subseteq\E_T$. This defines a natural transformation $\varphi_i: \fDrap^\ell_{\E}\rightarrow \fQuot_{\E}$ between functors of Artin rings and hence a morphism of $S$-schemes
\[
\varphi_i: \Drap^\ell_\E \rightarrow\Quot_\E.
\]

\begin{proposition}{\label{smooth}}
Let $\X$ be a projective $S$-scheme, and $\E$ a coherent sheaf on $\X$ that is flat over $S$. Suppose $p\in \Drap^3_{\E}$ is an element over a smooth point $s\in S$ corresponding to a flag \smash{$F_1\subseteq F_2\subseteq \E|_s = E$} such that
\begin{enumerate}[ label = (\alph*)]
\item $\Hom\big(F_1,E/F_2\big) = 0$, and
\item $o(\varphi_1):o_p\big(\Drap^3_{\E}/S \big)\rightarrow o_{\varphi_1(p)}\big(\Quot_{\E}/S\big)$ is the zero map. \end{enumerate} Then the morphism \smash{$\varphi_2 :\Drap^3_{\E} \rightarrow \Quot_{\E}$} is smooth at $p$.
\end{proposition}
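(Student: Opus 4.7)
The plan is to apply Theorem~\ref{teo_obstrucciones} to the local ring homomorphism $B \to A$ induced by $\varphi_2$ at $p$, where $A := \widehat{\OO}_{\Drap^3_\E, p}$ and $B := \widehat{\OO}_{\Quot_\E, \varphi_2(p)}$. Smoothness of $\varphi_2$ at $p$ is the formal smoothness of $A$ over $B$, equivalently $o(A/B) = 0$. Applying the six-term exact sequence of Proposition~\ref{T_sec} to the tower $R := \widehat{\OO}_{S,s} \to B \to A$, this reduces to showing (i) $T_{A/R} \to T_{B/R}$ is surjective and (ii) $o(A/R) \to o(B/R)$ is injective.

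For (i), Proposition~\ref{suc_drap_rel} identifies $T_{A/R} = \Ext^0_+(E,E)$ for the flag $F_1 \subseteq F_2 \subseteq E$ and $T_{B/R} = \Hom(F_2, E/F_2)$. The spectral sequence of Theorem~\ref{spectral_sequence} at total degree zero has first-page terms $E_1^{-1,1} = \Hom(F_1, F_2/F_1) \oplus \Hom(F_2/F_1, E/F_2)$ and $E_1^{-2,2} = \Hom(F_1, E/F_2)$. Hypothesis~(a) kills the latter, so $\Ext^0_+(E, E)$ collapses to the direct sum above, and the forgetful map $T_{A/R} \to T_{B/R}$ factors as the projection onto $\Hom(F_2/F_1, E/F_2)$ followed by the canonical map $\Hom(F_2/F_1, E/F_2) \to \Hom(F_2, E/F_2)$; the latter is an isomorphism by the long exact sequence associated with $0 \to F_1 \to F_2 \to F_2/F_1 \to 0$ together with~(a).

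For (ii), I consider the combined map
\[
\Psi := \big(o(\varphi_1),\; o(\varphi_2)\big) : \Ext^1_+(E,E) \longrightarrow \Ext^1(F_1, E/F_1) \oplus \Ext^1(F_2, E/F_2),
\]
which I aim to show is injective. Hypothesis~(a) makes the spectral sequence for $\Ext^1_+(E,E)$ especially tractable: its two surviving graded pieces are $\Ext^1(F_1, F_2/F_1) \oplus \Ext^1(F_2/F_1, E/F_2)$ and a subspace of $\Ext^1(F_1, E/F_2)$. The long exact sequences of $0 \to F_2/F_1 \to E/F_1 \to E/F_2 \to 0$ and $0 \to F_1 \to F_2 \to F_2/F_1 \to 0$ (together with~(a)) give parallel two-step filtrations on $\Ext^1(F_1, E/F_1)$ and $\Ext^1(F_2, E/F_2)$, and a careful analysis of $\varphi_i^\ast$ in the spectral sequence shows that $\Psi$ acts as the obvious identification on the top graded piece and as a diagonal embedding into $\Ext^1(F_1, E/F_2)$ on the bottom, both injective. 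A filtered diagram chase then yields the injectivity of $\Psi$. Restricting to $o(A/R) \subseteq \Ext^1_+(E,E)$ and applying hypothesis~(b) (which says $o(\varphi_1) = 0$ on $o(A/R)$), any $\omega \in o(A/R)$ with $o(\varphi_2)(\omega) = 0$ satisfies $\Psi(\omega) = 0$, whence $\omega = 0$.

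The main obstacle is verifying the promised description of $\Psi$ on the graded pieces of the filtration, in particular the compatibility of the spectral sequence connecting maps with the long exact sequences of sheaves on the target. Once this is settled, the rest of the argument is a routine diagram chase using hypotheses~(a) and~(b), combined with the obstruction machinery of Section~2.
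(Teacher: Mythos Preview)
Your proposal is correct and follows essentially the same approach as the paper: both reduce to showing surjectivity of the relative tangent map $\Ext^0_+(E,E)\to\Hom(F_2,E/F_2)$ and injectivity of the relative obstruction map, then verify these via the two-column spectral sequence of Theorem~\ref{spectral_sequence} together with hypotheses~(a) and~(b). The only cosmetic difference is in the reduction step---the paper invokes Theorem~\ref{teo_obstrucciones}(b) and Lemma~\ref{lema_obstrucciones} to pass from absolute to relative obstructions, whereas you use the six-term sequence for $R\to B\to A$ directly---and in the presentation of the chase, where the paper packages the filtration comparison you describe into a single commutative diagram linking the long exact sequence~(\ref{sel_espectral}) with the maps $o(\varphi_1)$ and $o(\varphi_2)$.
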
 

\begin{proof} By Theorem \ref{teo_obstrucciones} it suffices to show that \smash{$d\varphi_2: T_p\Drap^3_{\E}\rightarrow T_{\varphi_2(p)}\Quot_{\E}$} is surjective and the obstruction map \smash{$o(\varphi_2): o_p(\Drap^3_\E)\rightarrow o_{\varphi_2(p)}(\Quot_\E)$} is injective. We know by Theorem \ref{teo_obstrucciones} that $o_s(S) = 0$ because $s\in S$ is a smooth point. Therefore, using Lemma \ref{lema_obstrucciones}, to prove the injectivity of the map $o(\varphi_2)$ we only need to prove the injectivity of $o(\varphi_2):o_p(\Drap^3_E/S)\rightarrow o_{\varphi_2(p)}(\Quot_E/S)$. Proposition \ref{suc_drap_rel} provides us a commutative diagram with exact rows 

\[ \begin{tikzcd}[column sep = 20 pt, row sep = 18 pt] 0 \arrow[r] & {\Ext^0_{+}\big(E,E\big)} \arrow[r] \arrow[d, "d\varphi_2"] & T_p\Drap^3_{\E} \arrow[r] \arrow[d, "d\varphi_2"] & T_sS \arrow[r] \arrow[equal]{d}& {\Ext^1_{+}(E,E)} \arrow[d, "o(\varphi_2)"] \\ 0 \arrow[r] & {\Hom\big(F_2,E/F_2\big)} \arrow[r] & T_{\varphi_2(p)}\Quot_{\E} \arrow[r] & T_sS \arrow[r] & {\Ext^1\big(F_2,E/F_2\big)} \end{tikzcd} \] 

\noindent and two inclusions \[ o_p\big(\Drap^3_\E/S\big)\subseteq \Ext^1_{+}\big(E,E\big),\quad o_{\varphi_2(p)}\big(\Quot_\E/S\big)\subseteq \Ext^1_{+}\big(F_2,E/F_2\big).
\] 
Chasing elements in this diagram the proof reduces to verifying that the vertical arrow on the left is surjective and that $o(\varphi_2):o_p(\Drap^3_E/S)\rightarrow o_{\varphi_2( p)}(\Quot_E/S)$ is injective.

Theorem \ref{spectral_sequence} applied to the flag $F_1\subseteq F_2\subseteq E$ says that there exists a spectral sequence that converging to $\Ext^\bullet_{+}(E,E)$ such that
\begin{align*}
E_1^{-1,i+1} &= \Ext^i(F_1,F_2/F_1) \oplus \Ext^i(F_2/F_1,E/F_2),\\
E_1^{-2, i+2} &= \Ext^i(F_1,E/F_2),
\end{align*}
and the remaining terms on the first page are zero. As this page is supported only in two consecutive columns, we have a long exact sequence of the form \begin{equation}{\label{sel_espectral}} 
\cdots \rightarrow \Ext^i(F_1,F_2/F_1) \oplus \Ext^i (F_2/F_1,E/F_2) \rightarrow \Ext^{i}_{+}(E,E) \rightarrow \Ext^i(F_1,E/F_2)\rightarrow \cdots \end{equation}

On the other hand the morphisms $\varphi_1,\varphi_2: \Drap^3_{E}\rightarrow \Quot_{E}$ induce a linear applications between tangent spaces
\begin{align*}
 d\varphi_1&: \Ext^0_{+}(E,E) \rightarrow \Hom(F_1,E/F_1)\\
 d\varphi_2&: \Ext^0_{+}(E,E) \rightarrow \Hom(F_2,E/F_2),
\end{align*}
and between obstruction spaces
\begin{align*}
 o(\varphi_1)&: \Ext^1_{+}(E,E) \rightarrow \Ext^1(F_1,E/F_1)\\
 o(\varphi_2)&: \Ext^1_{+}(E,E) \rightarrow \Ext^1(F_2,E/F_2).
\end{align*}
These linear transformations can be coupled to the long exact sequence (\ref{sel_espectral}) to obtain the following commutative diagram:

\hspace{-4.3cm}
{\small
\hspace*{1.25cm}
\begin{tikzcd}[column sep = small]
 & {\Hom(F_1,E/F_1) \oplus \Hom(F_2,E/F_2)} \arrow[rd] & \\
 {\Hom(F_1,F_2/F_1) \oplus \Hom(F_2/F_1,E/F_2)} \arrow[r,hook] \arrow[ru] & {\Ext^{0}_{+}(E ,E)} \arrow[r] \arrow[u] & {\Hom(F_1,E/F_2)} \ar[out=-20, in=160]{dll} \\
 {\Ext^1(F_1,F_2/F_1) \oplus \Ext^1(F_2/F_1,E/F_2)} \arrow[r] \arrow[rd] & {\Ext^{1}_{+} (E,E)} \arrow[r] \arrow[d] & {\Ext^1(F_1,E/F_2) } \\
 & {\Ext^1(F_1,E/F_1) \oplus \Ext^1(F_2,E/F_2)} \arrow[ru] &
\end{tikzcd}
}
\\
\noindent The diagonal arrows involved come from the long exact sequences induced by the bifunctor $\Hom(-,-)$. Using the assumptions of the statement and chasing elements in the previous diagram we can conclude that $\Ext^0_{+}(E,E) \rightarrow \Hom(F_2,E/F_2)$ is surjective and \smash{$o_p\big(\Drap^3_ {\E}/S\big)\subseteq \Ext^1_{+}(E,E)\rightarrow o_{\varphi_2(p)}\big(\Drap^3_{\E}/S\big)\subseteq\Ext^1(F_2,E/F_2)$} is injective as we wanted.
\end{proof}

\begin{remark}{\label{smooth2}}
Applying the same reasoning as in the previous proof, if we replace condition (b) in Proposition \ref{smooth} with the modified condition
\begin{enumerate}
\item[(b')] $o(\varphi_2): o_p\big(\Drap^3_{\E}/S\big) \to o_{\varphi_2(p)}\big(\Quot_{\E}/S\big)$ is the zero map,
\end{enumerate}
we can conclude that $\varphi_1 : \Drap^3_{\E} \to \Quot_{\E}$ is smooth at $p$.
\end{remark}
\section{Deformations of pullbacks under morphisms}

The main goal of this section is to prove Theorem~\ref{stability_morphisms}. Along the way, we establish some rigidity properties of foliations defined by fibrations.

\begin{definition}
Let $\X/S$ and $\Y/S$ be two locally trivial families of normal varieties, $\Pi: \X\rightarrow \Y$ a morphism of $S$-schemes and $\dG/S$ is a family of integrable distributions on $\Y/S$. The \textit{pullback} $\dF/S = \Pi^\ast(\dG/S)$ is defined as the family of integrable distributions on $\X/S$ whose tangent sheaf $T_{\dF/S}$ is the kernel of the composition
\[
T_{\X/S}\rightarrow \Pi^{[\ast]} T_{\Y/S}\rightarrow \Pi^{[\ast]} N_{\dG/S}.
\]
\end{definition}

It should be noted that $\Pi^{[\ast]} N_{\dG/S}$ is torsionless because is the dual of a sheaf, and hence the normal sheaf $N_{\dF/S}$ is also torsionless. In order to prove Theorem \ref{stability_morphisms}, we first need the following vanishing lemma:

\begin{lemma}{\label{vanishing}}
Let $\pi: X\rightarrow Y$ be a proper morphism between normal varieties such that its generic fiber is smooth and has no global holomorphic 1-forms. If $\F = \pi^\ast\G$ is a pullback foliation, then $\Hom(T_{X/Y},N_\F) = 0$.
\end{lemma}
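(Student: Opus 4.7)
The plan is to restrict any $\varphi \in \Hom(T_{X/Y},N_\F)$ to the scheme-theoretic generic fiber of $\pi$, show it must vanish there because of the hypothesis on global $1$-forms, and then globalize using torsion-freeness of $N_\F$.

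First I would choose the dense open $U\subseteq Y$ over which $\pi$ is smooth and $\G$ is non-singular, and set $V=\pi^{-1}(U)$, a dense open of $X$. Since $\F=\pi^\ast\G$, the defining inclusion $T_{X/Y}\subseteq T_\F$ holds, and on $V$ we have $\pi^{[\ast]}=\pi^\ast$ together with a surjection $T_X|_V\twoheadrightarrow \pi^\ast N_\G|_V$ whose kernel is precisely $T_\F|_V$. Comparing the quotients $T_X/T_\F$ and $\pi^\ast T_Y/\pi^\ast T_\G$ on $V$ yields a canonical identification
\[
N_\F|_V \;\cong\; \pi^\ast N_\G|_V.
\]

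Next I would specialise to the scheme-theoretic generic fiber $X_\eta$, with $\eta$ the generic point of $Y$. Since $\pi$ is smooth at $\eta$, we have $X_\eta\subseteq V$, so $\varphi$ restricts to a $k(\eta)$-linear map
\[
T_{X_\eta} \longrightarrow N_{\G,\eta}\otimes_{k(\eta)}\OO_{X_\eta},
\]
whose target is a trivial vector bundle. By duality on the smooth $k(\eta)$-variety $X_\eta$,
\[
\Hom_{\OO_{X_\eta}}\bigl(T_{X_\eta},\,N_{\G,\eta}\otimes_{k(\eta)}\OO_{X_\eta}\bigr) \;\cong\; H^0\bigl(X_\eta,\Omega^1_{X_\eta}\bigr)\otimes_{k(\eta)}N_{\G,\eta}.
\]
The hypothesis that a general closed fiber $X_y$ has no non-zero global holomorphic $1$-forms spreads out to the generic fiber by semicontinuity of fiberwise $H^0$ applied to the coherent sheaf $\pi_\ast\Omega^1_{X/Y}$ on the smooth locus. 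Hence the right-hand side is zero, and $\varphi|_{X_\eta}=0$.

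Finally, $N_\F$ is torsion-free by definition of a foliation, and therefore so is $\sHom(T_{X/Y},N_\F)$ (any $\sHom$ into a torsion-free sheaf is again torsion-free). On the integral variety $X$, a global section of a torsion-free sheaf that vanishes at the generic point $\zeta\in X_\eta$ must vanish identically; we conclude $\varphi=0$. The main technical point I expect is verifying the identification $N_\F|_V\cong \pi^\ast N_\G|_V$ on the common smooth locus, which requires some care in reconciling the definition of pullback foliation through $\pi^{[\ast]}$ with its honest pullback on $V$; once this is in place, the reduction to the generic fiber and the torsion-freeness step are standard.
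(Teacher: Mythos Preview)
Your proposal is correct and follows essentially the same strategy as the paper: reduce via torsion-freeness of $\sHom(T_{X/Y},N_\F)$ to the locus where $\pi$ and $\G$ are smooth, identify $N_\F\cong\pi^\ast N_\G$ there, trivialize, and invoke the vanishing of global $1$-forms on fibers. The only cosmetic difference is that the paper works over a sufficiently small open set and cites Proper Base Change to obtain $\pi_\ast\Omega^1_{X/Y}=0$, whereas you pass directly to the scheme-theoretic generic fiber and appeal to semicontinuity; these are two phrasings of the same reduction.
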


\begin{proof}
Note that since $N_\F$ is torsion-free, then $\sHom(T_{X/Y},N_\F)$ is also torsion-free. So if we want to prove that the global sections of this sheaf are zero, it will be sufficient to verify that such sections are generically zero. This allows us to assume, after restricting ourselves to an appropriate open set of $X$, that the varieties involved, the morphism $\pi$ and the foliation $\G$ are smooth. Thus the normal sheaf of $\F$ turns out to be equal to $N_\F =\pi^\ast N_\G$. Further shrinking the open set if necessary we can assume that $N_\G = \OO_{Y}^{q}$. On the other hand, thanks to the hypothesis about fibers and Proper Base Change Theorem we can conclude that $\pi_\ast\Omega^1_{X/Y} = 0$, therefore $\Hom(T_{X/Y},N_\F) = \Gamma\big(\Omega_{X/Y}^1\big)^q = \Gamma\big(\pi_\ast\Omega^1_{X/Y}\big)^q = 0$.
\end{proof}

If we specialize this lemma to the case where $\G$ is the foliation by points, we obtain the following result on rigidity of fibrations:

\begin{corollary}{\label{rigidity}}
If $\F$ is the foliation whose leaves are the fibers of a proper morphism $\pi: X\rightarrow Y$ with smooth generic fiber and no global 1-forms, then the tangent $T_\F = T_{X/Y}$ is rigid as a subsheaf of $T_X$.
\end{corollary}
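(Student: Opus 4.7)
The plan is to apply Lemma~\ref{vanishing} in the special case where $\G$ is the foliation by points on $Y$. With this choice one has $T_\G = 0$ and $N_\G$ coincides, after reflexivization on the smooth locus of $Y$, with $T_Y$, so the composition
\[
T_X \rightarrow \pi^{[\ast]}T_Y \rightarrow \pi^{[\ast]}N_\G
\]
reduces to $T_X \rightarrow \pi^{[\ast]}T_Y$, whose kernel is the (saturated) relative tangent sheaf $T_{X/Y}$. Consequently the pullback $\pi^\ast\G$ is precisely the foliation by fibers of $\pi$, which identifies it with $\F$, and in particular $T_\F = T_{X/Y}$.

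Next, I would translate the rigidity statement into a vanishing statement about an $\Ext^0$. First-order deformations of $T_\F$ as a subsheaf of $T_X$ (with fixed Hilbert polynomial) are parametrized by the tangent space of the scheme $\Quot_{T_X}$ at the point $p$ corresponding to the inclusion $T_\F \hookrightarrow T_X$. Specializing Proposition~\ref{suc_drap_rel} to flags of length $\ell = 2$ and base $S = \Spec k$, this tangent space is canonically isomorphic to
\[
\Hom\bigl(T_\F,\, T_X/T_\F\bigr) \;=\; \Hom\bigl(T_{X/Y},\, N_\F\bigr).
\]

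Finally, Lemma~\ref{vanishing}, applied to the pullback foliation $\F = \pi^\ast\G$ just produced, gives exactly the vanishing $\Hom(T_{X/Y}, N_\F) = 0$. Hence $T_p\Quot_{T_X} = 0$, so $T_\F = T_{X/Y}$ admits no nontrivial first-order deformations inside $T_X$, which is the desired rigidity.

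I do not expect a serious obstacle here: the corollary is a direct combination of Lemma~\ref{vanishing} with the standard tangent-space formula for the Quot scheme. The only mild point of care is the identification $T_\F = T_{X/Y}$, which is immediate from the definition of pullback together with the fact that $T_{X/Y}$ is already saturated in $T_X$ as the kernel of $T_X \rightarrow \pi^{[\ast]}T_Y$.
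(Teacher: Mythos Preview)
Your proposal is correct and follows essentially the same approach as the paper: the corollary is obtained by specializing Lemma~\ref{vanishing} to the case where $\G$ is the foliation by points on $Y$, so that $T_\F = T_{X/Y}$ and $\Hom(T_{X/Y},N_\F)=0$. Your extra step making explicit the identification of this Hom group with the tangent space of $\Quot_{T_X}$ at the point $T_{X/Y}\subseteq T_X$ is exactly the intended interpretation of ``rigid as a subsheaf'' and is used in that form later in the proof of Theorem~\ref{stability_morphisms}.
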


%\textcolor{purple}{Conjetura: Si $\pi: X\rightarrow Y$ es suave y $H^0(T_Y) = 0$, entonces $\Hom(T_{X/Y},\pi^\ast T_Y) = 0$. Conozco dos instancias en las que vale esto: si $X$ es un producto, y si $X$ es una fibración elíptica suave (ver mis notas, usa el grado de la imagen directa del anticanónico) }

\subsection{Proof of Theorem \ref{stability_morphisms}} \label{ssec:proofT01}
Observe that the coherent bundle \smash{$\E = T_{\X/S}$} is flat over $S$, hence the choice of an $S$-ample line bundle $\OO_\X(1)$ allows us to construct the schemes \smash{$\Drap^3_\E$} and $\Quot_\E$. Let \smash{$p\in \Drap^3_{\E}$} be the point over $s\in S$ corresponding to the flag
\[
T_{X/Y}\subseteq T_{\F}\subseteq T_{X} = \E|_s.
\]
By Lemma \ref{vanishing}, we know that condition (a) of Proposition \ref{smooth} is verified. In turn, the subsheaf $T_{\X/\Y}\subseteq \E$ corresponds to a section $S\rightarrow \Quot_\E$ of the structural morphism $\Quot_{\E}\rightarrow S$ whose image contains the point $\varphi_1(p)$, and hence condition (b) is a consequence of Corollary \ref{rigidity} and the smoothness of $S$. Similarly, the inclusion $T_{\dF/S}\subseteq \E$ corresponds to a section $\sigma:S\rightarrow \Quot_{\E}$ of the structural morphism such that $\sigma(s) = \varphi_2(p)$. Using \cite[Corollaire 17.16.3]{EGAIV.4} we can find an étale neighborhood $U\rightarrow S$ of $s$ and a lifting 
\begin{center} \begin{tikzcd}[column sep = 25 pt, row sep = 20 pt] 
U \arrow[r, "{\widetilde{\sigma}}"] \arrow[d] & \Drap^3_\E \arrow[d, "\varphi_2"] \\ S\arrow[r, "\sigma"'] & \Quot_\E \end{tikzcd} 
\end{center} such that there exists $u\in U$ with $\widetilde{\sigma}(u) = p$. The morphism $\widetilde{\sigma}:U\rightarrow\Drap_\E$ corresponds to a flag $F_1\subseteq T_{\dF/U} \subseteq T_{\X/U}$. By the rigidity of $T_{X/Y}$ (Corollary \ref{rigidity}) we can conclude, after replacing $U$ by a smaller neighborhood if it is necessary, that $F_1 = T_{\X/\Y}|_U$. The result now follows from Lemma \ref{jorge}.
\section{Stability of algebraic leaves}

We shift our focus to the behavior of algebraic leaves of foliations under deformations and proceed to prove Theorem \ref{stab_leaves}.

\begin{definition}
Suppose that $Z\subseteq X$ is a closed subscheme of a smooth variety $X$. The sheaf of \textit{logarithmic vector fields} along $Z$ is the subsheaf $T_X(-\log Z)\subseteq T_X$ of vector fields witch, acting as derivations of functions, preserve the sheaf of ideals $I_{Z/X}$.
\end{definition}

\begin{definition}
A closed subscheme $Z\subseteq X$ is \textit{invariant} by a foliation $\F$ on $X$ if all the fields tangent to $\F$ preserve the sheaf $I_{Z/X}$, or in other words that
\[
T_\F\subseteq T_X(-\log Z).
\]
If in addition $Z$ is irreducible and $\dim Z = \rank T_\F$ we will say that $Z$ is an \textit{algebraic leaf} of $\F$. Note that in this case $T_\F|_Z$ and $T_Z$ coincide outside the singular points of $Z$ and $\F$.
\end{definition}

If $Z\subseteq X$ is a smooth subvariety, the sheaf $T_X(-\log Z)$ can be framed within the following commutative diagram with exact rows and columns \cite[p. 177]{sernesi2007deformations}: \begin{equation}\label{tang_log} \begin{tikzcd} & 0 \arrow[d] & 0 \arrow[d] & & \\ &I_{Z/X}\,T_X \arrow[d] \arrow[r,equal] & I_{Z/X}\,T_X \arrow[d] & & \\ 0 \arrow[r] & T_X(-\log Z) \arrow[r] \arrow[d] & T_X \arrow[r] \arrow[d] & N_{Z/X} \arrow[r]\arrow[d,equal] & 0 \\ 0 \arrow[r] & T_Z \arrow[r] \arrow[d] & T_X|_Z \arrow[r] \arrow[d] & N_{Z/X} \arrow[r] & 0 \\ & 0 & 0 & &
\end{tikzcd}
\end{equation}

From a smooth subvariety $Z\subseteq X$, we obtain a map
\[
\varphi: \Hilb_{X}\dashrightarrow \Quot_{T_{X}}
\]
defined by the assignment that sends a deformation $\mathcal{Z}\subseteq X\times S$ to the inclusion
\[
T_{X\times S/S}(-\log \mathcal{Z})\subseteq T_{X\times S/S}.
\]
The smoothness of this family follows from the smoothness of both $Z$ and $X$ as the deformations of the inclusion of $Z$ into $X$ are locally trivial. By identifying the normal $N_{Z/X}$ with the quotient $T_X/T_X(-\log Z)$ given by the diagram (\ref{tang_log}) we can define the map
\begin{equation}\label{Lie}
\mathcal{L}:\;N_{Z/X}\rightarrow\sHom\big(T_X(-\log Z), N_{Z/X}\big)
\end{equation}
which assigns to each local section $[v]$ the morphism $[v']\mapsto [\mathcal{L}_v(v')]$ given by the Lie derivative between vector fields. This construction is well-defined because $T_X(-\log Z)\subseteq T_X$ is closed under the Lie bracket. Using \cite[Thm. 1.6]{gomez1988transverse}, the differential of $\varphi$ at the point $Z\in\Hilb_X$ is identified by
\begin{equation}\label{dphi}
d\varphi =\mathcal{L}:\; \Gamma\big(N_{Z/X}\big)\rightarrow\Hom\big(T_X(-\log Z), N_{Z/X}\big).
\end{equation}

\begin{lemma}{\label{lemita}}
Suppose $v$ is a germ of vector field on $X$ such that
\[
\big[v,T_X(-\log Z)\big]\subseteq T_{X}(-\log Z).
\]
Then $v$ belongs to $T_{X}(-\log Z)$. In particular, (\ref{Lie}) is a monomorphism.
\end{lemma}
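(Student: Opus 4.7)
The statement is local on $X$ at points of $Z$ (off $Z$ the sheaf $T_X(-\log Z)$ coincides with $T_X$, so the conclusion is trivial), and since $Z$ is smooth we may choose local coordinates $x_1,\dots,x_n$ around a point of $Z$ for which $Z=V(x_1,\dots,x_c)$. In these coordinates $I_{Z/X}=(x_1,\dots,x_c)$, and a vector field $\sum b_i\partial_{x_i}$ lies in $T_X(-\log Z)$ if and only if $b_j\in I_{Z/X}$ for every $j\leqslant c$. Writing $v=\sum_{i=1}^{n} a_i\partial_{x_i}$, the goal becomes to show that $a_j\in I_{Z/X}$ for $j=1,\dots,c$.

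For each such $j$, the key test field is $v_j:=x_j\partial_{x_j}\in T_X(-\log Z)$. Using the Leibniz identity $[v,fw]=v(f)\,w+f[v,w]$, one computes
\[
[v,x_j\partial_{x_j}] \;=\; a_j\,\partial_{x_j} \;+\; x_j\,[v,\partial_{x_j}] \;=\; a_j\,\partial_{x_j} \;-\; x_j\sum_{i}\partial_{x_j}(a_i)\,\partial_{x_i}.
\]
The coefficient of $\partial_{x_j}$ in this bracket is $a_j-x_j\,\partial_{x_j}(a_j)$. By hypothesis the whole bracket lies in $T_X(-\log Z)$, so this coefficient belongs to $I_{Z/X}$; since $x_j\,\partial_{x_j}(a_j)\in I_{Z/X}$ automatically, we conclude $a_j\in I_{Z/X}$. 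Running this over all $j\leqslant c$ proves $v\in T_X(-\log Z)$.

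For the "in particular" clause, notice that $\mathcal{L}([v])=0$ in $\sHom\!\bigl(T_X(-\log Z),N_{Z/X}\bigr)$ means precisely that $[v,v']\in T_X(-\log Z)$ for every local section $v'\in T_X(-\log Z)$. The first part of the lemma then yields $v\in T_X(-\log Z)$, i.e.\ $[v]=0$ in $N_{Z/X}$. Since injectivity of a morphism of sheaves can be checked on stalks, this shows that $\mathcal{L}$ is a monomorphism.

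There is no real obstacle here; the only subtlety is the choice of test fields. The diagonal fields $x_j\partial_{x_j}$ succeed because the $j$-th component of $[v,x_j\partial_{x_j}]$ reproduces $a_j$ modulo $I_{Z/X}$, while the off-diagonal components are automatically divisible by $x_j$ and therefore give no information. Any family of test fields whose projection to $T_X|_Z/T_Z\cong N_{Z/X}^\vee\otimes\cdots$ spans would work equally well, but the explicit choice above keeps the computation to a single line.
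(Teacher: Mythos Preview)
Your proof is correct and follows essentially the same approach as the paper: choose local coordinates adapted to $Z$, test $v$ against the diagonal fields $x_j\partial_{x_j}$, and read off that the $j$-th coefficient $a_j$ lies in $I_{Z/X}$ from the logarithmic condition on the bracket. The paper phrases the computation as $[v,x_i\partial_i](x_i)=v(x_i)-x_i\partial_i(v(x_i))$, which is exactly your coefficient of $\partial_{x_j}$, and leaves the ``in particular'' clause implicit where you spell it out.
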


\begin{proof}
Consider a local coordinate system $x_1,\dots, x_m$ of $X$ such that $Z$ is defined by the equations $x_1 = \cdots = x_r = 0$. For each $1\leqslant i \leqslant r$ we are assuming that $\big[v,x_i \partial_{i}\big]$ is a logarithmic field, and hence the germ function
\[
\big[v,x_i \partial_{i}\big](x_i) = v(x_i) - x_i \partial_i(v(x_i))
\]
vanish along $Z$. Then $v(x_i)$ is also zero in $Z$, and hence $v$ is a logarithmic.
\end{proof}

\subsection{Proof of Theorem \ref{stab_leaves}}\label{ssec:proofT04}
Our goal is to prove that the maps
\[
\varphi:\Hilb_{X}\dashrightarrow \Quot_{T_{X}}\quad \quad \varphi_1: \Drap_{T_{X}}\rightarrow \Quot_{T_{X}}
\]
are smooth at the points corresponding to the inclusion $Z\subseteq X$ and the flag
\[
T_\F\subseteq T_X(-\log Z)\subseteq T_{X}
\]
respectively. Following Remark \ref{smooth2}, the morphism $\varphi_1$ will be smooth in the previous flag if the group $\Hom(T_\F,N_{Z/X})$ is zero and the scheme \smash{$\Quot_{T_X}$} is smooth at the inclusion $T_{X}(-\log Z)\subseteq T_X$. Using that $Z$ is an algebraic leaf of $\F$, we deduce that
\[
\Hom\big(T_\F,N_{Z/X}\big)\simeq \Hom\big(T_\F|_Z,N_{Z/X}\big)\simeq \Hom\big(T_Z,N_{Z/X}\big)\simeq \Gamma\big(\Omega^1_{Z}\otimes N_{Z/X}\big) = 0.
\]
Since $Z$ is an unobstructed subvariety, the proof is reduced to proving only the surjectivity of (\ref{dphi}). Thanks to Lemma \ref{lemita}, this can be further reduced to verifying the inequality
\[
\dim \Hom\big(T_X(-\log Z), N_{Z/X}\big) \leqslant \dim \Gamma\big(N_{Z/X}\big).
\]
From the first column of the diagram (\ref{tang_log}) we obtain the long exact sequence
\[
0\rightarrow \Hom\big(T_Z, N_{Z/X}\big) \rightarrow \Hom\big(T_X(-\log Z), N_{Z/X}\big) \rightarrow \Hom\big(I_{Z/X}\,T_X, N_{Z/X}\big)\rightarrow \cdots
\]
The first of these terms is zero by hypothesis, and therefore
\[
\dim \Hom\big(T_X(-\log Z), N_{Z/X}\big) \leqslant \dim \Hom\big(I_{Z/X}\,T_X, N_{Z/X}\big) = \dim \Gamma\big(\Omega^1_X\big\vert_{Z}\otimes N_{Z/X}^{2}\big).
\]
From what was assumed in item (ii) and the exact conormal sequence
\[
0\rightarrow N_{Z/X}\rightarrow \Omega^1_X\big\vert_{Z}\otimes N_{Z/X}^{2}\rightarrow \Omega^1_Z\otimes N_{Z/X}^{2}\rightarrow 0
\]
we approach the equality \smash{$\dim\Gamma\big(\Omega^1_X\big\vert_{Z}\otimes N_{Z/X}^{2}\big) =\dim \Gamma\big(N_{Z/X}\big)$}. This completes the proof of the theorem.

\begin{remark}
The hypotheses of Theorem \ref{stab_leaves} hold in any of the following cases:
\begin{itemize}
\item $Z$ is a smooth fiber of a submersion $\pi: X\rightarrow C$ with $\Gamma\big(\Omega^1_Z\big) = 0$.
\item $X$ has dimension $\geqslant 3$ and the line bundle $N_{Z/X}^\vee$ is ample.
\item $X$ is a surface and $Z$ is a genus $g$ curve with
\[
Z^2<\min\big\{0,\,2-2g\big\}.
\]
\end{itemize}
In the second and third scenarios the subvariety $Z$ is rigid, since $\Gamma(N_{Z/X}) = 0$. On the contrary, in the first case $Z$ can only be deformed into other neighboring fibers of $\pi: X\rightarrow C$. In the second item we are tacitly using Kodaira-Nakano Vanishing Theorem.
\end{remark}

%\textcolor{purple}{Parace implicar que la parte negativa de una foliación sobre una superficie es estable. En particular la dimensión de Kodaira numérica es invariante por Deformaciones!}

\section{Conical foliation singularities}

The main goal of this section is to prove Theorem \ref{stabcones} on the stability of conical singularities. As mentioned in the introduction, this is a foliated version of Schlessinger's rigidity theorem \cite{schlessinger1973rigid}. To state this result, we first introduce some preliminaries on unfoldings of foliations and a stability theorem due to Camacho and Lins Neto (see Theorem \ref{stab_sing_reg}). In Section~\ref{ssec:conessplit}, we present several examples of rigid singularities that are cones of split foliations.  

\subsection{Unfoldings of foliation germs} \label{ssec:unfoldings}
Throughout this section we will work, unless stated otherwise, on the category of analytic foliation germs on normal complex spaces.

\begin{definition}
Let $\F$ be a foliation germ of codimension $q$ on $(X,x)$. An \textit{unfolding} of $\F$ over a deformation $i:(X,x)\hookrightarrow (\X,x)$ consists of a foliation germ $\dF$ of codimension $q$ over this family such that $i^\ast\dF = \F$.
\end{definition}

Two unfoldings $\dF$ and $\dF'$ over the same family $(\X,x)$ are \textit{equivalent} if there exists a automorphism of deformations $\varphi: (\X,x)\rightarrow (\X,x)$ such that $\varphi^\ast \dF = \dF'$. When the base scheme of the deformation in question is $S=\Spec \C[\varepsilon]/(\varepsilon^2)$ we will say that $\dF$ is a \textit{first order unfolding}. Every unfolding $\dF$ naturally induces a deformation $\dF/S$ of the foliation $\F$. The sheaf $I_{\dF/S}$ that characterizes this family is given by the image of the composition
\[
I_\dF\rightarrow \Omega^{[1]}_\X\rightarrow \Omega^{[1]}_{\X/S}.
\]

If $\dF$ is defined by a $q$-form $\overline{\omega}$, we will denote by $\omega_s = i_s^\ast\overline{\omega}$ the $q$-form that defines the foliation on the fiber $i_s:\X_s\hookrightarrow \X$.

For practical purposes, we will assume from now on that $\F$ has codimension one and is defined on $(\C^n,0)$ by a $1$-form germ $\omega$. For simplicity let $\OO_n$ denote the ring of germs of holomorphic functions of $\C^n$ at the origin, and let $\Omega^p_n$ and $T_n$ be the corresponding $\OO_n$-modules of $p$-forms and vector fields. Finally $\m$ will be the maximal ideal of $\OO_n$. Since $(\C^n,0)$ is rigid, the first-order unfoldings $\dF$ of $\F$ are determined by a differential $1$-form
\[
\overline{\omega} = \omega + \eta \varepsilon + hd\varepsilon
\]
with $\eta\in\Omega^1_n$ and $h\in\OO_n$ verifying the Frobenius condition $\overline{\omega}\wedge d\overline{\omega}=0$. Using the equation $\varepsilon d\varepsilon = 0$, by means of elementary computations, we can conclude that the integrability condition is equivalent to
\[
hd\omega = \omega\wedge(\eta - dh).
\]
The other possible representatives of $\dF$ whose restriction is equal to $\omega$ are of the form
\[
(1+g\varepsilon)\overline{\omega} = \omega + (\eta + g\omega)\varepsilon + h d\varepsilon
\]
with $g\in\OO_n$ invertible. For this reason the first-order unfoldings of $\omega$ are in 1-1 correspondence with the elements of the set
\[
I(\omega) = \big\{h\in\OO_n:\; hd\omega = \omega\wedge\sigma\; \text{for some}\; \sigma\in\Omega^1_n\big\}.
\]
On the other hand, it can be verified that the unfoldings equivalent to the trivial unfolding $\overline{\omega} = \omega$ are identified with
\[
J(\omega) = \big\{h\in\OO_n:\; h = i_v\omega \;\text{for some}\;v\in T_n\big\}.
\]
By contracting both sides of the integrability condition $\omega\wedge d\omega=0$ by any field $v\in T_n$ we can deduce that $J(\omega)\subseteq I(\omega)$. This is summarized in the following statement:

\begin{proposition}[\cite{suwa1992unfoldings}]{\label{inf_unfoldings}}
The equivalence class set of first-order unfoldings of the foliation $\F$ defined by $\omega\in\Omega^1_n$ is in bijection with
\[
\Unf_\omega := I(\omega)/J(\omega).
\]
\end{proposition}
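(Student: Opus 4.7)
The plan is to check that the map $[\overline{\omega}] \mapsto [h]$, sending an unfolding to the $d\varepsilon$-coefficient of a defining form, gives a well-defined bijection $\Unf_\omega \to I(\omega)/J(\omega)$. The bulk of the algebra has already been carried out just above the statement; what remains is to assemble the pieces into well-definedness, surjectivity and injectivity, and to pin down how the automorphism group of the trivial deformation acts on the $d\varepsilon$-coefficient.

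First, since $(\C^n,0)$ is rigid, every deformation of it over $S = \Spec\C[\varepsilon]/(\varepsilon^2)$ is trivial, so every first-order unfolding admits a representative $\overline{\omega} = \omega + \eta\varepsilon + hd\varepsilon$ with $\eta \in \Omega^1_n$ and $h \in \OO_n$. Expanding $\overline{\omega}\wedge d\overline{\omega}$ and using $\varepsilon^2 = 0 = \varepsilon d\varepsilon$, the Frobenius condition splits into a piece not involving $d\varepsilon$, namely $\varepsilon(\omega\wedge d\eta + \eta\wedge d\omega)$, and a $d\varepsilon$-piece $\bigl(hd\omega - \omega\wedge(\eta - dh)\bigr)\wedge d\varepsilon$. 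Exterior differentiation of the second relation shows it implies the first, so integrability is equivalent to $hd\omega = \omega\wedge(\eta - dh)$, and in particular $h \in I(\omega)$. Replacing $\overline{\omega}$ by $(1+g\varepsilon)\overline{\omega}$ leaves the $d\varepsilon$-coefficient $h$ untouched and shifts $\eta$ by $g\omega$, so the $h$-coefficient depends only on the foliation $\dF$ and not on its chosen defining form.

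The second step is to analyze the action of automorphisms of the ambient deformation. Any such automorphism fixing $\varepsilon$ has the form $\varphi = \mathrm{id} + \varepsilon v$ for some $v \in T_n$, and Cartan's formula gives
\[
\varphi^{\ast}\omega \;=\; \omega + \varepsilon\,\mathcal{L}_v\omega + (i_v\omega)\,d\varepsilon,
\]
so that $\varphi^{\ast}\overline{\omega} = \omega + \varepsilon(\eta + \mathcal{L}_v\omega) + (h + i_v\omega)d\varepsilon$. Composing with a further unit multiplication by $1 + g\varepsilon$, two equivalent unfoldings must have $h$-coefficients differing by $i_v\omega \in J(\omega)$. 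This shows that $[\overline{\omega}] \mapsto [h]$ is a well-defined map $\Unf_\omega \to I(\omega)/J(\omega)$.

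For surjectivity, given $h \in I(\omega)$ with $hd\omega = \omega\wedge\sigma$, the choice $\eta := \sigma + dh$ makes $\omega + \eta\varepsilon + hd\varepsilon$ an integrable unfolding mapping to $[h]$. For injectivity, suppose two unfoldings $(\eta, h)$ and $(\eta', h')$ satisfy $h' - h = i_v\omega \in J(\omega)$; after applying $\varphi = \mathrm{id}+\varepsilon v$ we may assume $h = h'$, and subtracting the two integrability relations then yields $\omega\wedge(\eta' - \eta - \mathcal{L}_v\omega) = 0$. The main obstacle is to conclude from this that $\eta' - \eta - \mathcal{L}_v\omega = g\omega$ for some $g \in \OO_n$, because once this de Rham style division is in hand a final multiplication by $1+g\varepsilon$ identifies the two unfoldings. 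The division step relies on the ideal generated by the coefficients of $\omega$ having depth at least two, which is guaranteed by the codimension one hypothesis together with the torsion-freeness of $N_\F$ implicit in the statement.
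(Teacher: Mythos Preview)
Your proof is correct and follows the same approach as the paper's own sketch, which appears in the paragraphs immediately preceding the statement; the paper does not give a formal proof but rather cites Suwa and sets up the computation. You supply the missing details, most notably the explicit Cartan formula for the action of $\varphi = \mathrm{id} + \varepsilon v$ and the de~Rham--Saito division step $\omega\wedge\alpha = 0 \Rightarrow \alpha = g\omega$ needed for injectivity, which the paper's sketch passes over in silence but which is indeed justified by the standing hypothesis that $\omega$ defines a foliation (hence $\codim\Sing(\omega)\geqslant 2$).
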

The isomorphism classes of the $\C$-vector spaces $I(\omega)$, $J(\omega)$, and $\Unf_\omega$ are independent of the choice of representative $\omega$ for the foliation $\F$. Therefore, we will adopt the notational convention $\Unf_\F = \Unf_\omega$ when omitting the choice of the $1$-form $\omega$. In the case where $\F$ is a foliation over $\P^n$, we will denote by $\Unf_\F$ the first-order unfolding space of the foliation over $\C^{n+1}$, given by the cone $\pi^\ast \F$, where $\pi: \C^{n+1} \dashrightarrow \P^n$ is the projection to the quotient.

The members of the modules $\OO_n$, $\Omega^p_n$ and $T_n$ are formally decomposed as a sum of graded elements. This grading is determined by the identities $\deg x_i = \deg dx_i = -\deg\partial/\partial x_i = 1$ for $1\leqslant i \leqslant n$. Let us denote by $\OO_n(\ell)$, $\Omega^p_n(\ell)$ and $T_n(\ell)$ the respective homogeneous components of degree $\ell\in\Z$. When $\omega$ is homogeneous of degree $k$ every member of the spaces $I(\omega)$, $J(\omega)$ and $\Unf_\omega$ can be formally decomposed as a sum of their graded components $I(\omega)(\ell)$, $J(\omega)(\ell)$ and $\Unf_\omega(\ell)$ respectively. On the other hand, let us recall that an \textit{integrating factor} of $\omega$ is a germ $h\in\OO_n$ such that $d(\omega/h) = 0$. Equivalently, the integrating factors of $\omega$ belongs to
\[
K(\omega) = \big\{h\in\OO_n:\; hd\omega = dh\wedge\omega\big\}.
\]
Clearly $K(\omega)\subseteq I(\omega)$. For what follows we will need the following intermediate sets
\[
I^{(k)}(\omega) = \big\{h\in\OO_n:\; hd\omega = \omega\wedge(\eta - dh)\; \text{for some}\; \eta\in\m^{k-1}\Omega^1_n\big\}.
\]

\begin{definition}
Let $\F$ be a foliation germ of codimension one over $(\C^n,0)$ induced by a holomorphic $1$-form $\omega$.
\begin{itemize}
\item[$\bullet$] An unfolding $\dF$ of $\F$ over $(\C^{n+m},0)$ is \textit{$k$-trivial} if $j^k\omega_s = j^k\omega$ for all $|s|\ll 1$.
\item[$\bullet$] $\F$ is \textit{locally $k$-determined} if for all $k$-trivial unfolding $\dF$ of $\F$ over $(\C^{n+m},0)$ the 1-forms $\omega_s$ and $\omega$ induce isomorphic foliations for all $|s|\ll 1$.
 \item[$\bullet$] $\F$ is \textit{infinitesimally $k$-determined} if $I^{(k+1)}(\omega) \subseteq \m J(\omega) + K(\omega)$.
\end{itemize} 
\end{definition} 

\begin{theorem}[{{\cite[p. 991]{suwa1985determinacy}}}]{\label{determinacy_suwa}}
Let $\F$ be a codimension one foliation germ over $(\C^n,0)$ induced by a $1$-form $\omega$ such that
\[
\dim K(\omega)/\m J(\omega) \cap K(\omega) < \infty.
\]
If $\F$ is infinitesimally $k$-determined, then it is locally $k$-determined.
\end{theorem}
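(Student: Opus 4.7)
My plan is to prove this via a Moser-type homotopy argument combined with the algebraic determinacy hypothesis. First, by composing with holomorphic paths in the base of the unfolding, I would reduce to the case of a one-parameter $k$-trivial unfolding, i.e., a holomorphic family $\omega_s$ of integrable $1$-forms with $\omega_0 = \omega$ and $\omega_s - \omega \in \m^{k+1}\Omega^1_n$ for all small $s$. The objective will be to construct a holomorphic family of biholomorphisms $\phi_s$ of $(\C^n,0)$ with $\phi_0 = \operatorname{id}$ and units $f_s \in \OO_n$ with $f_0 = 1$ such that $\phi_s^{\ast}\omega_s = f_s\,\omega$.

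The Moser trick reduces this to finding, holomorphically in $s$, a time-dependent vector field $V_s \in \m T_n$ and a function $g_s \in \OO_n$ solving the cohomological equation
\[
\partial_s\omega_s + \mathcal{L}_{V_s}\omega_s \;=\; g_s\,\omega_s.
\]
Setting $h_s = i_{V_s}\omega_s$ and using Cartan's formula together with the integrability identity $i_{V_s}(\omega_s\wedge d\omega_s)=0$, wedging this equation with $\omega_s$ shows it is equivalent to
\[
h_s\,d\omega_s \;=\; \omega_s\wedge\bigl(-\partial_s\omega_s - dh_s\bigr).
\]
Since $-\partial_s\omega_s \in \m^{k+1}\Omega^1_n \subset \m^k\Omega^1_n$, this is precisely the defining condition for $h_s \in I^{(k+1)}(\omega_s)$. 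Moreover, a candidate $h_s$ is supplied free of charge by the unfolding itself: writing its defining form as $\overline{\omega} = \omega_s + H(x,s)\,ds$, the integrability $\overline{\omega}\wedge d\overline{\omega}=0$ is exactly this equation with $h_s = H(\cdot,s)$, which is automatically holomorphic in $s$.

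The next step is to invoke the infinitesimal $k$-determinacy hypothesis $I^{(k+1)}(\omega)\subseteq \m J(\omega) + K(\omega)$. Being an open condition stable under perturbations that preserve the $k$-jet, it propagates to $\omega_s$ for $|s|$ small, so at each such $s$ we can write $h_s = i_{V_s}\omega_s + K_s$ with $V_s \in \m T_n$ and $K_s \in K(\omega_s)$. The integrating-factor term $K_s$ is harmless: it contributes only a multiple of $\omega_s$ to the Moser equation and is absorbed into $g_s$. The resulting $V_s$ thus solves the Moser equation, and the classical Moser flow integrates it to the required family $\phi_s$ (which fixes the origin because $V_s \in \m T_n$), while $f_s$ is obtained by integrating $g_s$ along this flow.

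The hard part will be upgrading the pointwise decomposition $h_s = i_{V_s}\omega_s + K_s$ to one depending holomorphically on $s$. This is exactly where the finiteness hypothesis $\dim K(\omega)/\bigl(\m J(\omega)\cap K(\omega)\bigr) < \infty$ enters: it controls, in a finite-dimensional manner, the ambiguity in choosing the integrating-factor component and reduces the splitting to a finite-dimensional linear problem with holomorphic parameters. A Nakayama-type argument (or a Weierstrass preparation-style reduction) should then deliver the desired holomorphic dependence, after which the Moser integration proceeds in standard fashion.
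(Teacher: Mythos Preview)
The paper does not give its own proof of this theorem: it is quoted from Suwa's 1985 article and used as a black box in the proof of Proposition~\ref{determinacy_los_chicos}. So there is no in-paper argument to compare against, and your outline should be judged on its own merits. The Moser homotopy scheme you describe is indeed the standard route to such determinacy statements and is essentially Suwa's method; the reduction to a one-parameter family, the identification of the cohomological equation, and the observation that the unfolding itself supplies a candidate $h_s\in I^{(k+1)}(\omega_s)$ are all correct (up to an inessential sign in the $\partial_s\omega_s$ term).

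There is, however, a genuine gap. You assert that the inclusion $I^{(k+1)}(\omega)\subseteq \m J(\omega)+K(\omega)$ is ``an open condition stable under perturbations that preserve the $k$-jet'' and therefore holds for each $\omega_s$. This is not justified: the modules $I^{(k+1)}(\omega_s)$, $J(\omega_s)$, $K(\omega_s)$ depend on the full germ $\omega_s$, not only on its $k$-jet, and the asserted inclusion is between infinite-dimensional $\OO_n$-modules, so no naive openness argument applies. In Suwa's treatment this step and your ``hard part'' (holomorphic dependence of the decomposition) are not separated; rather, one works over the parameter ring $\C\{s\}$ from the outset. The finiteness hypothesis $\dim K(\omega)/\bigl(\m J(\omega)\cap K(\omega)\bigr)<\infty$ is what makes the obstruction to splitting $h_s$ land in a finitely generated $\C\{s\}$-module, after which a preparation/Nakayama argument at $s=0$ produces a decomposition holomorphic in $s$ directly---bypassing any pointwise propagation claim. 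So your instinct about where the finiteness enters is right, but it should replace the unsupported openness assertion rather than supplement it. With that correction the outline is sound.
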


In the framework of classical Singularity Theory a result of Tougeron states that every isolated singularity germ $f\in\OO_n$ with Milnor number $\mu = \mu(f)$ is $(\mu+1)$-determined, i.e. $f$ is equivalent to its $(\mu+1)$-jet \cite{arnold_singularities_vol1}. Using Suwa's Theorem one can establish the following criterion of finite determinacy for foliation germs of the same nature:

\begin{proposition}{\label{determinacy_los_chicos}}
Let $\F$ be a foliation germ of codimension one on $(\C^n,0)$ induced by $\omega$. Suppose that $j^{k-1}\omega = 0$ and $\omega_k= j^k\omega$ is a nonzero homogeneous $1$-form without polynomial integrating factors satisfying
\[
\Unf_{\omega_k}(\ell) = 0\quad\; \forall\;\ell > k.
\]
Then $\omega$ and $\omega_k$ induce isomorphic foliations.
\end{proposition}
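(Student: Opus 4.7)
The strategy is to invoke Suwa's finite determinacy theorem (Theorem~\ref{determinacy_suwa}) applied to $\omega_k$, and then to exhibit $\omega$ as a fiber of a $k$-trivial unfolding of $\omega_k$.

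First I would verify that $\omega_k$ satisfies Suwa's hypotheses. The vanishing $K(\omega_k) = 0$ follows from homogeneity: the integrating-factor equation $h\,d\omega_k = dh\wedge\omega_k$ is graded, so any $h \in K(\omega_k)$ decomposes into homogeneous polynomial integrating factors, all of which vanish under the no-polynomial-integrating-factor hypothesis. In particular, the finite-dimensionality condition $\dim K(\omega_k)/(\m J(\omega_k) \cap K(\omega_k)) < \infty$ is trivially satisfied. For infinitesimal $k$-determinacy, that is $I^{(k+1)}(\omega_k) \subseteq \m J(\omega_k) + K(\omega_k)$, take $h \in I^{(k+1)}(\omega_k)$ with $h\,d\omega_k = \omega_k\wedge(\eta - dh)$ for some $\eta \in \m^k\Omega^1_n$, and decompose $h = \sum h_j$ into homogeneous components. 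The constraint $\eta \in \m^k\Omega^1_n$ forces $\eta_\ell = 0$ for $\ell \leqslant k$, so the degree-$(k+j)$ component of the equation reduces to $h_j\,d\omega_k = dh_j\wedge\omega_k$ for $j \leqslant k$ (yielding $h_j \in K(\omega_k) = 0$), while for $j > k$ it places $h_j \in I(\omega_k)(j) = J(\omega_k)(j)$ by the hypothesis $\Unf_{\omega_k}(j) = 0$. A vector field $v \in T_n(j-k)$ realizing such an element has, for $j-k \geqslant 1$, polynomial coefficients of degree $\geqslant 2$, so $v \in \m T_n$ and $h_j \in \m J(\omega_k)$. A closure argument in the Krull topology then gives $h \in \m J(\omega_k)$. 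Suwa's theorem therefore yields that $\omega_k$ is locally $k$-determined.

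Next I would exhibit $\omega$ as a fiber of a $k$-trivial unfolding of $\omega_k$ via the scaling family
\[
\omega_s \;:=\; s^{-k}\rho_s^*\omega \;=\; \omega_k + s\,\omega_{k+1} + s^2\,\omega_{k+2} + \cdots,\qquad \rho_s(x) = sx,
\]
an analytic family of integrable $1$-forms with $\omega_0 = \omega_k$, $\omega_1 = \omega$, $j^k\omega_s = \omega_k$ for all $s$, and $\omega_s \sim \omega$ via $\rho_s$ for every $s\neq 0$. To package this family as a bona fide unfolding on $(\C^{n+1},0)$, one seeks an analytic $h(x,s)$ making $\overline{\omega} := \omega_s + h\,ds$ integrable, equivalent to the PDE
\[
h\,d_x\omega_s \;=\; \omega_s\wedge(\partial_s\omega_s - d_xh).
\]
Expanded in powers of $s$, the integrability condition reduces at each order to a first-order unfolding equation for $\omega_k$ in a specific homogeneous degree strictly greater than $k$; the obstructions are controlled by the graded pieces $\Unf_{\omega_k}(\ell)$ with $\ell > k$ (vanishing by hypothesis), and the integrability of $\omega$ itself provides the consistency of the recursive data. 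Artin approximation then upgrades the resulting formal solution to an analytic one. With $\overline{\omega}$ in hand, the local $k$-determinacy of $\omega_k$ forces $\omega_s \sim \omega_k$ for all sufficiently small $s$; combined with $\omega_s \sim \omega$ for $s \neq 0$, this gives $\omega \sim \omega_k$.

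The main technical hurdle is the construction of the analytic function $h(x,s)$: the inductive solvability of the integrability PDE must be carefully organized so that at every order the obstruction falls into a vanishing graded piece of $\Unf_{\omega_k}$ above degree $k$, leveraging the integrability of $\omega$ to guarantee the cohomological conditions at each step.
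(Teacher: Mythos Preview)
Your approach coincides with the paper's: establish infinitesimal $k$-determinacy of $\omega_k$ via Suwa's criterion, then link $\omega$ to $\omega_k$ through the scaling family $\omega_s$. The graded analysis of $I^{(k+1)}(\omega_k)$ is correct, and your Krull-closure argument for $h\in\m J(\omega_k)$ is a legitimate alternative to the paper's use of Artin approximation (ideals in a Noetherian local ring are $\m$-adically closed). Your concluding step---pick a small nonzero $s$ and use $\omega\sim\omega_s\sim\omega_k$---is also fine.

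The one place you overcomplicate is the unfolding structure on the scaling family, which you flag as ``the main technical hurdle'' requiring a recursive PDE and a second invocation of the $\Unf$ vanishing. No recursion is needed: the pullback of $\omega$ by $\phi(x,s)=sx$ is already an integrable $1$-form on $(\C^{n+1},0)$, and dividing it by $s^k$ produces the unfolding $\overline\omega=\omega_s+h\,ds$ with the explicit
\[
h(x,s)=\sum_{\ell>k}s^{\ell-k-1}\,(i_R\omega_\ell)(x).
\]
The only potential pole, the $\ell=k$ term $s^{-1}\,i_R\omega_k$, vanishes because the radial field $R$ is a symmetry of the homogeneous form $\omega_k$, so $i_R\omega_k$ would be a polynomial integrating factor and is zero by hypothesis (Remark~\ref{symmetries}). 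The paper compresses this into the single observation $s^k\overline\omega(x,s)=\omega(sx)$; your proposed inductive construction of $h$ would work but is unnecessary.
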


This proposition is a mere adaptation of a theorem of Molinuevo and Quallbrunn \cite{molinuevo2017camacho}. As in the proof of Tougeron's Theorem, the authors' strategy (and ours as well) consists in applying Thom's \textit{homotopy method}. Specifically, the expression
\[
\overline{\omega}(x,s) = \sum_{\ell\geqslant k}s^{\ell-k}\,\omega_{\ell}(x)
\]
determines a family that interpolates the forms $\overline{\omega}(x,0) = \omega(x)$ and $\overline{\omega}(x,1) = \omega_k(x)$. This is an unfolding since $s^k\overline{\omega}(x,s) = \omega(sx)$ is clearly integrable, which is also $k$-trivial (as an unfolding of the germ $\omega_s$ for all $0\leqslant s\leqslant 1$). If we succeed in justifying that $\omega_s$ is locally $k$-determined, for any pair of values of $0\leqslant s\leqslant 1$, the foliations that induce these forms must be isomorphic.

\begin{proof}[Proof of Proposition \ref{determinacy_los_chicos}]
By Theorem \ref{determinacy_suwa} it suffices to prove that the foliation $\F$ is infinitesimally $k$-determined. Given a germ $h\in I^{(k+1)}(\omega_k)$, by definition there must exist a $1$-form $\eta\in \m^k\Omega^1_n$ such that
\[
hd\omega_k = \omega_k\wedge(\eta-dh).
\]
Let \smash{$h = \sum_{\ell\geqslant 0}h_\ell$} and \smash{$\eta = \sum_{\ell\geqslant 0}\eta_\ell$} be the decompositions into homogeneous components of these elements. Since $\omega_k$ is homogeneous, the integrability condition in the previous equation leads to the identities
\[
h_\ell d\omega_k = \omega_k\wedge(\eta_\ell-dh_\ell)
\]
with $\ell\geqslant 0$. The condition on $\eta$ implies that $\eta_\ell = 0$ for all $\ell\leqslant k$ (the $1$-forms have positive degree) and hence $h_\ell = 0$ for the same collection of indices (since $\omega_k$ has no polynomial integrating factors). For any remaining value $\ell> k$ the condition $\Unf_{\omega_k}(\ell) = 0$ implies the existence of a homogeneous field $v_{\ell-k}\in T_n$ of degree $\ell-k$ such that $h_\ell = i_{v_{\ell-k}}\omega_k$. Consequently the field $v = \sum_{\ell>k}v_{\ell-k}$ vanishes at the origin and is a formal solution of the equation $h = i_v\omega_k$. By Artin Approximation Theorem \cite{artin1968solutions} there must exist an analytic solution belonging to $\m T_n$. Then, $h\in \m J(\omega_k) = \m J(\omega_k) + K(\omega_k)$ as claimed.
\end{proof}

\begin{remark}{\label{symmetries}}
A homogeneous $1$-form $\omega_k\in\Omega^1_n$ of degree $k$ without polynomial integrating factors such as the one appearing in the statement of Proposition \ref{determinacy_los_chicos} must necessarily descend to projective space. More generally: if the field $v\in T_n$ is a \textit{symmetry} of a foliation germ $\omega\in \Omega^1_n$, i.e.
\[
L_v\omega\wedge\omega =0,
\]
then $h = i_v\omega$ is a polynomial integrating factor of $\omega$. Indeed,
\begin{align*}
dh\wedge\omega = (L_v\omega - i_vd\omega)\wedge \omega = - i_vd\omega\wedge \omega = hd\omega.
\end{align*}
The Euler radial field
\[
R = x_1\frac{\partial}{\partial x_1}+\cdots +x_n\frac{\partial}{\partial x_n}
\]is a symmetry for any homogeneous $1$-form since after a simple computation the classical identity $L_R\omega_k = k \omega_k$ is obtained. Going even further, such homogeneous 1-forms without polynomial integrating factors are exactly those that do not belong to any of the logarithmic components of the moduli space of foliations over $\P^n$ (see \cite{degree3}).
\end{remark}

\subsection{Camacho-Lins Neto regularity} \label{ssec:CLNreg}

As we have seen in the first section, there are two families of stable singularities of foliations of codimension one: the Morse-type and the Kupka-type singularities. Next we will add one more family to the list, those called \textit{regular} singularities. They were introduced by Camacho and Lins Neto in \cite{camacho1982topology}. Later Molinuevo discovered that the regularity condition is intimately related to the first-order unfoldings of the foliation itself \cite{molinuevo2016unfoldings}.

Suppose that $\omega\in\Omega^1_{n}$ is an integrable homogeneous $1$-form of degree $k$. Consider the complex of $\C$-vector spaces
\[
C^\bullet_\omega:\quad T_n\rightarrow \Omega^1_n\rightarrow \Omega^3_n
\]
starting at degree zero, and whose differentials are
\begin{align*}
d^0(v) & = L_v\omega,\\
d^1(\eta) & = \eta\wedge d\omega + \omega\wedge d\eta.
\end{align*}
Note that the composition of both morphisms is zero since $d^1\circ d^0 = L_v(\omega\wedge d\omega)$. By homogeneity of $\omega$, for each $\ell\in\Z$ we obtain a subcomplex
\[
C^\bullet_\omega(\ell):\quad T_n(\ell-k)\rightarrow \Omega^1_n(\ell)\rightarrow \Omega^3_n(\ell+k).
\]

\begin{definition}
We say that $\omega$ is \textit{regular}, in the Camacho-Lins Neto sense, if
\[
H^1\big(C^\bullet_\omega(\ell)\big) = 0\quad\; \forall\;\ell < k.
\]
The \textit{rank} of $\omega$ is the dimension of the space of $1$-coboundaries of the complex $C^\bullet_\omega(k-1)$, that is, the dimension of the image of the differential $d^0:T_n(-1)\rightarrow \Omega^1_n(k-1)$.
\end{definition}

\begin{theorem}[{{\cite[p. 1600 and 1608]{molinuevo2016unfoldings}}}]
Given an integrable $1$-form $\smash{\omega\in\Gamma\big(\Omega^1_{\P^n}(k)\big)}$, there exist isomorphisms of $\C$-vector spaces
\[
\Unf_\omega(\ell) \simeq H^1\big(C^\bullet_\omega(\ell)\big)
\]
for all $\ell\neq k$. In particular, $\omega$ is regular if and only if $\Unf_{\omega}(\ell) = 0$ for all $\ell < k$.
\end{theorem}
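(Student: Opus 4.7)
The plan is to construct inverse maps between $I(\omega)(\ell)/J(\omega)(\ell)$ and $H^1(C^\bullet_\omega(\ell))$ explicitly, exploiting the projective descent property $i_R\omega = 0$, where $R = \sum x_i\partial_i$ is the Euler radial field on $\C^{n+1}$.

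\smallskip

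\emph{Forward map.} Given $h\in I(\omega)(\ell)$, choose $\sigma\in\Omega^1_n(\ell)$ with $hd\omega = \omega\wedge\sigma$ and set $\eta := \sigma + dh$. Expanding the integrability $\overline\omega\wedge d\overline\omega = 0$ of the unfolding $\overline\omega = \omega + \eta\varepsilon + hd\varepsilon$ via $\varepsilon^2 = 0$ and $\varepsilon\,d\varepsilon = 0$, the equation splits into two pieces: the $d\varepsilon$-component recovers $hd\omega = \omega\wedge(\eta - dh)$, while the $\varepsilon$-component is precisely $d^1(\eta) = \omega\wedge d\eta + \eta\wedge d\omega = 0$. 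Thus $\eta \in \ker d^1(\ell)$. Since $\omega$ does not vanish in codimension one, $\sigma$ is determined modulo $\omega\cdot\OO_n$, so $\eta$ is defined modulo $\omega\cdot\OO_n(\ell - k)$; but the identity $L_{(f/k)R}\,\omega = (f/k)L_R\omega = f\omega$ (using $i_R\omega = 0$ and $L_R\omega = k\omega$) shows that $f\omega \in \im d^0(\ell)$ for every $f \in \OO_n(\ell-k)$. Finally, when $h = i_v\omega$ lies in $J(\omega)(\ell)$, contracting $\omega\wedge d\omega = 0$ by $v$ gives $\sigma = i_v d\omega$, whence $\eta = i_v d\omega + d(i_v\omega) = L_v\omega = d^0(v)$. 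Hence the assignment descends to a well-defined morphism $\Phi:\Unf_\omega(\ell)\to H^1(C^\bullet_\omega(\ell))$.

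\smallskip

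\emph{Inverse map.} Here the hypothesis $\ell\neq k$ enters critically: I define $\Psi([\eta]) := [\tfrac{1}{\ell - k}\,i_R\eta]$. The Euler identities $i_Rd\omega = L_R\omega = k\omega$ and $i_Rdh = R(h) = \ell h$, together with contracting $hd\omega = \omega\wedge\sigma$ against $R$ (which forces $i_R\sigma = -kh$ since $i_R\omega = 0$), give $i_R\eta = (\ell - k)h$, hence $\Psi\circ\Phi = \operatorname{id}$. For $\Phi\circ\Psi = \operatorname{id}$, I would compute $L_R(\omega\wedge\eta)$ in two ways: directly, by Leibniz, it equals $(k+\ell)\,\omega\wedge\eta$; and via Cartan's formula $L_R = d\,i_R + i_R\,d$, using $d(\omega\wedge\eta) = 2\,d\omega\wedge\eta$ (a consequence of $d^1(\eta) = 0$) and $i_R(\omega\wedge\eta) = -(\ell - k)h\,\omega$. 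Equating the two expressions and dividing by $\ell - k$ recovers exactly $hd\omega = \omega\wedge(\eta - dh)$. Finally, the identity $i_R L_v\omega = L_v(i_R\omega) + i_{[R,v]}\omega = (\ell - k)\,i_v\omega$ (using $[R,v] = (\deg v)\,v$ on graded vector fields) shows that $\Psi$ sends $\im d^0(\ell)$ into $J(\omega)(\ell)$.

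\smallskip

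The technical heart of the argument is the twofold computation of $L_R(\omega\wedge\eta)$, which must be arranged so that the cocycle condition $d^1(\eta) = 0$ translates back into the integrability condition $hd\omega = \omega\wedge(\eta - dh)$ without extraneous factors. This hinges on careful sign bookkeeping for wedge products of $1$- and $2$-forms, on the projective descent $i_R\omega = 0$, and on the non-vanishing of $\ell - k$. The particular claim about regularity is then immediate by applying the isomorphism in the range $\ell < k$.
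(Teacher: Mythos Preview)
Your proof is correct. The paper itself does not prove this statement: it cites Molinuevo's article and remarks only that the result ``is obtained by combining Proposition 3.1.5, Corollary 6.1.8 and Theorem 3.2.2 of the cited article.'' Your argument is therefore a self-contained alternative, constructing explicit mutually inverse maps via the Euler field $R$ and the projective descent condition $i_R\omega=0$. Two minor points of exposition worth tightening: (i) when you invoke ``the integrability of $\overline\omega$'' to obtain $d^1(\eta)=0$, it is cleaner to differentiate $hd\omega=\omega\wedge\sigma$ directly, yielding $dh\wedge d\omega=-\sigma\wedge d\omega-\omega\wedge d\sigma$ and hence $d^1(\sigma+dh)=0$, rather than relying on the equivalence the paper states without proof; (ii) the implications $(kh+i_R\sigma)\,\omega=0\Rightarrow i_R\sigma=-kh$ and $\omega\wedge(\sigma-\sigma')=0\Rightarrow \sigma-\sigma'\in\omega\cdot\OO$ both use that $\operatorname{codim}\Sing(\omega)\geq 2$ (de Rham--Saito division), which you invoke once but should flag at both places. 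Neither point is a gap.
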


This Theorem is obtained by combining Proposition 3.1.5, Corollary 6.1.8 and Theorem 3.2.2 of the cited article. This pair of spaces are not necessarily isomorphic if $\ell = k$. The elements of the first four families of foliations in Table 2 of \cite{degree3} satisfy that $\Unf_\F(k) = 0 \neq H^1\big(C^\bullet_\omega(k)\big)$ (as we will see later, by Proposition \ref{split}, $\Unf_\F = 0$ is verified but these foliations are not rigid).

\begin{definition}
Let $\F$ be a foliation of codimension one on $X$. We will say that a singularity $p$ of $\F$ is \textit{regular of degree $k$ and rank $r$} if $X$ is smooth in $p$ and there exists a $1$-form germ $\omega\in\Omega^1_{X,p}$ defining $\F$ whose k-jet $j^k_p\omega$ is homogeneous and regular of rank $r$.
\end{definition}

\begin{theorem}[{{\cite[p. 8]{camacho1982topology}}}]{\label{stab_sing_reg}}
Let $\F$ be a foliation of codimension one on $X$.
\begin{enumerate}[label = (\alph*)]
\item The set $M_k^r(\F)$ of regular singularities of degree $k$ and rank $r$ of $\F$ is,assuming it is nonempty, a smooth subscheme of $X$ of codimension $r$.
\item Given a deformation $\dF/S$ of $\F$ as an involutive Pfaff system on $\X/S$ and smooth base $S$, the set $M_k^r(\dF)=\bigcup_{s\in S} M_k^r(\dF_s)$ is smooth on $S$.
\end{enumerate}
\end{theorem}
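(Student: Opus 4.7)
The plan is a local analysis at a fixed $p\in M_k^r(\F)$ via the implicit function theorem, identifying the relevant differential with the map $d^0\colon T_n(-1)\to\Omega^1_n(k-1)$ of the Camacho-Lins Neto complex.

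Work in local analytic coordinates $z_1,\dots,z_n$ centered at $p$ and fix a local generator $\omega\in I_\F$. By hypothesis $j^{k-1}_p\omega=0$, so $\omega=\omega_k+\omega_{k+1}+\cdots$ with $\omega_k$ homogeneous of degree $k$, regular, and of rank $r$. The condition $j^{k-1}_q\omega=0$ does not depend on the chosen generator: if $\omega'=g\omega$ with $g$ a local unit, the Leibniz rule shows that the vanishing of all partial derivatives of order $<k$ transfers between $\omega$ and $\omega'$ at any common zero of $\omega,\omega'$.

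Define a holomorphic map $\Phi\colon U\to V$ on a neighborhood $U$ of $p$ valued in the finite-dimensional space $V$ of polynomial $1$-forms in $n$ variables of degree $<k$, by $q\mapsto j^{k-1}_q\omega$ expressed in coordinates recentered at $q$. The first key step is to compute $d\Phi_p$: extending $v\in T_pX$ to a constant vector field $v=\sum v_j\partial_{z_j}$, one finds
\[
d\Phi_p(v) \;=\; j^{k-1}_0(L_v\omega) \;=\; L_v\omega_k,
\]
because $v$ has degree $-1$, so $L_v\omega_{k+j}$ has degree $k+j-1\geq k$ for every $j\geq 1$ and is truncated away. Hence $d\Phi_p$ coincides with $d^0$, whose image has dimension $r$ by hypothesis. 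By the rank theorem, $\Phi^{-1}(0)$ is smooth of codimension $r$ near $p$. The residual conditions (C-LN regularity and rank exactly $r$ of the $k$-jet) are open on the zero locus: regularity at degree $\ell<k$ is the open condition $\rank(d^0)+\rank(d^1)=\dim\Omega^1_n(\ell)$, and on this regular locus both ranks are simultaneously lower and upper semi-continuous, hence locally constant. Therefore $M_k^r(\F)$ is open in $\Phi^{-1}(0)$ near $p$, proving (a).

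For (b) I would repeat this argument relatively. A local generator $\overline\omega\in\Omega^1_{\X/S}$ of $I_{\dF/S}$ yields a relative version $\overline\Phi$ of $\Phi$, and its relative differential at $p\in M_k^r(\dF_s)$ still agrees with $d^0$ applied to the leading homogeneous term of $\omega_s$, hence has fibrewise rank $r$. Combined with the smoothness of $S$ and of $\X/S$, this shows that $M_k^r(\dF)$ is smooth of codimension $r$ in $\X$; a dimension count together with part (a) applied fiberwise then forces $M_k^r(\dF)\to S$ to be smooth.

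The main obstacle I anticipate is the clean identification $d\Phi_p=d^0$: it requires careful bookkeeping to verify that the truncation $j^{k-1}$ kills every contribution from the higher homogeneous pieces $\omega_{k+j}$, so that only $\omega_k$ matters; in the relative setting one must additionally check that differentiating along $S$ does not interfere with this purely fiberwise computation.
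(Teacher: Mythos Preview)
The paper does not contain a proof of this statement: it is quoted from Camacho--Lins Neto and used as a black box in the proof of Theorem~\ref{stabcones}. So there is no in-paper argument to compare yours against; the relevant benchmark is the original source.

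Your overall strategy and the identification $d\Phi_p(v)=L_v\omega_k=d^0(v)$ are correct and are indeed the heart of the matter. The gap is the sentence ``By the rank theorem, $\Phi^{-1}(0)$ is smooth of codimension~$r$ near~$p$.'' Rank~$r$ of $d\Phi_p$ alone does not force the zero locus to be a smooth codimension-$r$ submanifold: the constant-rank theorem needs constant rank on a full neighbourhood of $p$, not merely at $p$ or along the zero set. A toy model such as $\Phi(x,y,z)=(x,\,y^2-z^2)$ on $\C^3$ already shows what can go wrong ($\operatorname{rank} d\Phi_0=1$, but $\Phi^{-1}(0)$ is a pair of planes meeting along a line, hence singular).

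What your argument never uses is the integrability of $\omega$ together with the regularity hypothesis for the degrees $\ell<k-1$. You invoke only the $\ell=k-1$ layer, which is where the rank~$r$ sits; but for $\ell<k-1$ one has $T_n(\ell-k)=0$, so regularity there says precisely that $d^1\colon\Omega^1_n(\ell)\to\Omega^3_n(\ell+k)$ is injective. It is this injectivity, combined with $\omega\wedge d\omega=0$, that forces the low-degree components of the recentred jet to vanish once the $r$ well-chosen degree-$(k-1)$ equations do, and hence that $\Phi^{-1}(0)$ really is cut out by $r$ independent equations near~$p$. Without this step the smoothness claim is unsupported, and the relative statement in~(b) inherits the same gap.
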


\subsection{Stability of conical singularities}\label{ssec:conical}

\begin{definition}
Let $\F$ be a foliation germ on $(\C^n,0)$. An analytical deformation $\dF/\C^{m}$ of $\F$ on $(\C^{n+m},0)$ is said \textbf{versal} if for any other deformation $\dF'/\C^{m'}$ of $\F$ on $(\C^{n+m'},0)$ there exists a commutative diagram 
\begin{center} 
\begin{tikzcd}[column sep = 0.5 em] & {(\C^n,0)} \arrow[ld, hook] \arrow[rd, hook] & \\ {(\C^{n+m'},0)} \arrow[rr,"\varphi"] \arrow[d] & & {(\C^{n+m},0)} \arrow[d] \\ {(\C^{m'},0)} \arrow[rr] & & {(\C^{m},0)}
\end{tikzcd}
\end{center}
such that $\varphi^\ast(\dF/\C^{m})\simeq \dF'/\C^{m'}$.
\end{definition}

The moduli space of foliations of codimension one and degree $k$ over the projective space $\P^n$ is the scheme
\[
\Fol_{\P^n,k} = \Big\{[\omega]\in \P\big(\Gamma(\Omega^1_{\P^n}(k+2))\big):\,\, \omega\wedge d\omega = 0,\,\, \codim S(\omega)\geqslant 2\Big\}.
\]
This has attached to it a universal family of foliations of this type with base $S=\Fol_{\P^n,k}$ which we will simply denote as $\dF/S$.

\subsection{Proof of Theorem \ref{stabcones}}\label{ssec:proofT03}
Let us take a deformation $\dF'/\C^{m'}$ of the foliation $\F$. By Theorem \ref{stab_sing_reg} we can assume without loss of generality that $\dF'_s$ has a regular singularity at the origin for all $|s|\ll 1$. By the semi-continuity of the space dimension $K(\omega)$ in families, the foliations $\dF'_s$ will also have no polynomial integrating factors. 
Proposition~\ref{determinacy_los_chicos} and Remark \ref{symmetries} allow us to further assume that \smash{$\dF'/\C^{m'}$} descends to the projective space $\P^n$. The rest of the proof follows from the versality of the deformation~$\dF/S$.

\begin{remark}
It is worth highlighting the connection between the Theorem \ref{stabcones} and the work of Cerveau and Mattei on moduli spaces of foliations over $\C^{n+1}$ determined by homogeneous $1$-forms of degree $k$ prefixed \cite[Quatrième partie]{mattei1982d}. They proved that the irreducible components of such a space are the logarithmic and non-logarithmic components of $\Fol_{\P^n,k}$. Note that the deformations of analytic germs considered in this section are not necessarily polynomial, which is why neither of these two results implies the other.
\end{remark}

\subsection{Cones of foliations with split tangent sheaf}\label{ssec:conessplit}

The main source of examples of foliations $\F$ that verify the hypotheses of Theorem \ref{stabcones} are those of \textit{split} type, that is, those whose tangent sheaf $T_\F$ is a direct sum of line bundles.

\begin{proposition}[{{\cite[p. 21]{massri2015kupka}}}]{\label{split}}
Let $\mathscr{F}$ be a split foliation of codimension one on $\P^n$ such that $\Sing(d\omega)$ has codimension greater than or equal to $3$. Then $\Unf_\mathscr{F} = 0$.
\end{proposition}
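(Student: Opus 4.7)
Plan: My strategy is to exploit the isomorphism $\Unf_{\omega}(\ell)\simeq H^1(C^\bullet_\omega(\ell))$ for $\ell\neq k$ recalled earlier in this section from Molinuevo's theorem, which reduces the vanishing $\Unf_\F = 0$ to the vanishing of the first cohomology of the Camacho--Lins Neto complex in each graded degree $\ell\neq k$. The graded piece $\ell=k$, which is not covered by this isomorphism, has to be handled separately, directly from the definition $\Unf_\F = I(\omega)/J(\omega)$.

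Since $T_\F=\bigoplus_{i=1}^{n-1}\OO_{\P^n}(a_i)$, there exist homogeneous polynomial vector fields $v_1,\dots,v_{n-1}$ on $\C^{n+1}$ generating $T_\F$, and each $v_i$ is a symmetry of $\omega$ in the sense of Remark~\ref{symmetries}: $L_{v_i}\omega = \alpha_i\omega$ for some polynomial $\alpha_i$. Together with the Euler field $R$ (for which $L_R\omega=k\omega$) they span the tangent sheaf of the cone $C(\F)$ at generic points. Crucially, the splitness turns every cohomology group involving $T_\F$ or $T_\F^{\vee}=\bigoplus\OO(-a_i)$ into a direct sum of line-bundle cohomologies $H^\bullet(\P^n,\OO(m))$, where Bott vanishing applies.

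Given a cocycle $\eta\in\Omega^1_n(\ell)$ representing a class in $H^1(C^\bullet_\omega(\ell))$, i.e., satisfying $\eta\wedge d\omega + \omega\wedge d\eta = 0$, I would attempt to produce $v\in T_n(\ell-k)$ with $L_v\omega=\eta$. On the open set $U=\P^n\setminus\Sing(d\omega)$ the $2$-form $d\omega$ is nowhere zero, so a Frobenius-type argument together with the splitting of $T_\F$ constructs $v$ locally; the obstruction to patching these local solutions is a \v{C}ech $1$-cocycle valued in a direct sum of twists of the $\OO(a_i)$, hence in a direct sum of groups of the form $H^1(\P^n,\OO(m))$, which vanish for $n\geqslant 2$. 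The hypothesis $\codim\Sing(d\omega)\geqslant 3$ is then used to extend $v$ across $\Sing(d\omega)$ via Hartogs, since $T_{\P^n}$ is locally free and therefore reflexive.

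The main obstacle is the precise identification of the obstruction class and the verification that splitness, combined with the codimension condition on $\Sing(d\omega)$, jointly force its vanishing. In the cited argument of Massri this is done by matching the Camacho--Lins Neto complex against the Koszul complex of $\omega$ twisted by the summands $\OO(a_i)$ of $T_\F$; the delicate points are treating the degree $\ell=k$ piece (outside the scope of Molinuevo's isomorphism, and where one must argue directly that every $h\in I(\omega)(k)$ is of the form $\sum g_j f_j$ with $\omega=\sum f_j\,dx_j$), and controlling the interaction between the symmetries $L_{v_i}\omega=\alpha_i\omega$ and the ideal $J(\omega)=(f_1,\dots,f_{n+1})$ cut out by $\Sing(\omega)$.
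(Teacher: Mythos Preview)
The paper does not prove this proposition: it is quoted from \cite[p.~21]{massri2015kupka} and used as a black box, so there is no argument in the paper to compare your proposal against.

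That said, your outline is only a plan, and as written it does not close. The reduction via Molinuevo's isomorphism covers $\ell\neq k$ but, as you note yourself, leaves the piece $\Unf_\omega(k)$ untreated; you only flag this as ``delicate'' without giving an argument. Your patching step is also imprecise: you claim the obstruction to globalizing a local $v$ with $L_v\omega=\eta$ lies in $H^1$ of a sum of line bundles on $\P^n$, but the local solution $v$ is only unique up to sections of $T_\F$ (vector fields annihilated by $L_{(\cdot)}\omega$ modulo multiples of $\omega$), and identifying this kernel sheaf with $\bigoplus\OO(a_i)$ on the nose requires exactly the argument you have not written out. Finally, the Hartogs step needs $\codim\Sing(d\omega)\geqslant 3$ together with reflexivity of the sheaf in which $v$ lives; you invoke $T_{\P^n}$, but $v$ is a section of $T_{n+1}$ on the cone, and one has to be careful about which variety the extension is performed on. None of these are fatal, but each is a genuine gap that would need to be filled before the argument constitutes a proof; the cited reference handles them by a direct computation with the Koszul-type complex associated to $d\omega$, which is where the hypothesis on $\Sing(d\omega)$ enters cleanly.
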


\noindent Let us enumerate families of examples that verify the hypotheses of Theorem \ref{stabcones}:

\begin{example}
A very general foliation by curves of degree $k\geqslant 2$ over $\P^2$ has only Kupka-type singularities \cite{neto1988algebraic} and no polynomial integrating factors (in fact, these foliations do not have algebraic leaves \cite[p. 158]{jouanolou2006equations}).
\end{example}

\begin{example}{\label{TM}}
The next family of examples of foliations on $\P^3$ tangent to a multiplicative action is extracted from~\cite[p.10]{degree3}. These are the generic elements of certain closed subsets $TM_d(a,b,c;n)\subseteq \Fol_{\P^3,d}$ that we will describe below. Let $a,b,c,n$ be non-negative integers such that $0\leqslant a < b < c$  do not have common divisors. Consider the vector field
\[
v_{(a,b,c)} = ax_0\frac{\partial}{\partial x_0} + bx_1\frac{\partial}{\partial x_1} + c x_2\frac{\partial}{\partial x_2},
\]
described in homogeneous coordinates. The elements of $TM_d(a,b,c;n)$ induce those foliations on $\P^3$ of degree $d$ which up to a linear change of coordinates are determined by a $1$-form $\omega$ satisfying the conditions
\[
i_{v_{(a,b,c)}}\omega = 0, \quad L_{v_{(a,b,c)}} \omega = n\omega.
\]
Assuming that $d\geqslant 3$ and $1\leqslant a<b<c$, \cite[Thm 4.12]{degree3} presents a characterization in terms of the integer parameters involved of the irreducible components of the form $TM_d(a,b,c;n)$ whose generic element is split and has at most finite non-Kupka singularities. On the other hand, Proposition 4.5 of the same article establishes a criterion to determine whether these foliations have a polynomial integrating factor. They give in their Table 2 an exhaustive list with all the possible components that verify these two conditions, including the degenerate cases $0\leqslant a\leqslant b\leqslant c \neq 0$.
\end{example}

\begin{example}
One of the 6 irreducible components of the moduli space $\Fol_{\P^n,2}$ with $n\geqslant 3$ is the well known \textit{exceptional component} $\operatorname{E}(n)$ \cite[p. 580]{cerveau1996irreducible}. In order to describe it we will consider the action of the affine group
\[
\operatorname{Aff(\C)} = \left\{
\begin{pmatrix}
a & b \\
0 & a^{-1}
\end{pmatrix} \Bigg|\;\; a\in\C^{\ast},\;\; b\in \C
\right\}
\]
on the space $\P^3 = \P(\Sym^3\C^2\big)$ of binary forms of degree 3 via changes of coordinates. The affine Lie algebra $\mathfrak{aff(\C)}$ is generated by the matrices
\[
X = \begin{pmatrix}
1 & 0 \\
0 & -1
\end{pmatrix}, \quad Y = \begin{pmatrix}
0 & 1 \\
0 & 0
\end{pmatrix}
\]
which act on the basis $z_i = x^{3-i}y^i$ of $\Sym^3\C^2$ as
\[
X\cdot z_i = (3-2i) z_i, \quad Y\cdot z_i = z_{i+1}.
\]
They are therefore expressed in these coordinates as
\[
X = \sum_{i=0}^3(3-2i)z_i\frac{\partial}{\partial z_i}, \quad Y = \sum_{i=0}^{2}z_{i+1}\frac{\partial}{\partial z_{i}}.
\]
These vector fields induce a split foliation $\F$ of codimension one over $\P^3$ which has a unique non-Kupka singularity \cite[p.10]{calvo2006note}. The component $E(n)\subseteq \Fol_{\P^n,2}$ is the closure of the set of foliations of the form $\pi^\ast \F$, with $\pi: \P^n\dashrightarrow \P^3$ a linear projection.
\end{example}

\begin{example}
More generally, consider Lie algebras generated by the following vector fields over $\P^n$
\[
X = \sum_{i=0}^n(n-2i)z_i\frac{\partial}{\partial z_i},\quad Y_j = \sum_{i=0}^{n-k}z_{i+j}\frac{\partial}{\partial z_{i}}\quad j = 1,\dots,n-2.
\]
\noindent Taking linear pullbacks of the foliation generated by them, new irreducible components generalizing the exceptional components were obtained \cite[p.5]{CukiermanSplit}. They also found two Lie algebras $\mathfrak{g}_6$ and $\mathfrak{g}_7$ which determine rigid foliations of codimension one on $\P^6$ and $\P^7$ respectively \cite[Prop. 6.9]{CukiermanSplit}.
\end{example}
\section{Deformation of pullback foliations under rational maps}

In this section we introduce the genericity condition between a rational map~$\pi:X\dashrightarrow Y$ and a foliation $\G$ on $Y$ which appears among the hypotheses of Theorem~\ref{stability_rational_maps}. We also provide a proof of this result.

\begin{definition} Let $\pi:X\dashrightarrow Y$ be a rational map between normal varietiess, $\G$ a foliation on $Y$, and $\F = \pi^\ast\G$ the pullback between the two. We define the \textit{scheme of tangents} between $\pi$ and $\G$ as the schematic difference \[ \Tang\big(\pi,\G\big) = \Sing \F\setminus\pi^{-1}(\Sing \G).
\] \end{definition}

\begin{definition}
Let $X$ be a normal projective variety and $\pi:X\dashrightarrow \P^n$ a dominant rational map determined by an ample line bundle $L$ over X and sections $s_0, \dots, s_n\in\Gamma(L)$. We will say that $\pi$ \textit{generic} if
\begin{enumerate}[label =(\alph*)]
\item $X$ is smooth along all points of the base locus $B$,
\item the sections $s_i$ intersect transversely along $B$.
\end{enumerate} 
\end{definition} 

\begin{lemma}{\label{base_locus_structure}} Given a point $p\in B$ at the base locus of a generic rational map $\pi: X\dashrightarrow \P^n$, there is an holomorphic coordinate system $z_1,\dots,z_m$ centered at $p$ such that 
\[
\pi(z_1,\dots,z_{m}) = [z_1: \ldots :z_{n+1}].
\]
\end{lemma}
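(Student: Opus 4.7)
The plan is to reduce the statement to an application of the holomorphic inverse function theorem after trivializing $L$ locally. Shrink to an affine neighbourhood $U$ of $p$ on which the line bundle $L$ admits a nowhere-vanishing holomorphic section $\sigma$, and write $s_i|_U = f_i\,\sigma$ with $f_i \in \OO_X(U)$ for $0 \leq i \leq n$. Since $p$ belongs to the base locus $B$, all functions $f_i$ vanish at $p$, and the divisors $\{s_i = 0\}$ agree on $U$ with the zero loci $\{f_i = 0\}$. Moreover, because $\pi$ is defined by the tuple of sections $(s_0,\dots,s_n)$, on $U \setminus B$ we may rewrite $\pi = [f_0 : \cdots : f_n]$ after cancelling the common factor $\sigma$.

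Next I would translate the two genericity hypotheses into an infinitesimal statement at $p$. Hypothesis (a) gives that $X$ is smooth at $p$, so $T^{\ast}_p X$ is an $m$-dimensional vector space where $m = \dim X$. Hypothesis (b) says that the $n+1$ smooth divisors $D_i = \{f_i = 0\}$ meet transversely along $B$; at the point $p$ this is exactly the statement that the codifferentials $df_0(p), \dots, df_n(p)$ are linearly independent in $T^{\ast}_p X$, and in particular $n+1 \leq m$.

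With linear independence at hand, the holomorphic inverse function theorem lets me complete $f_0,\dots,f_n$ to a local biholomorphic chart. Concretely, I would pick holomorphic functions $w_{n+2},\dots,w_m \in \OO_{X,p}$ whose differentials at $p$, together with $df_0(p),\dots,df_n(p)$, form a basis of $T^{\ast}_p X$; then $(f_0,\dots,f_n,w_{n+2},\dots,w_m)$ is a holomorphic coordinate system centered at $p$. Renaming $z_i := f_{i-1}$ for $1\leq i\leq n+1$ and $z_i := w_i$ for $n+2\leq i\leq m$, one gets in these coordinates
\[
\pi(z_1,\dots,z_m) = [f_0 : \cdots : f_n] = [z_1 : \cdots : z_{n+1}],
\]
which is the desired normal form.

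There is no serious obstacle here: the only point requiring care is the equivalence between the geometric notion of transverse intersection of $n+1$ smooth hypersurfaces at a common point of a smooth variety and the linear independence of their local defining differentials, which is classical. Everything else is a clean application of the inverse function theorem in the holomorphic category.
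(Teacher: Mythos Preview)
Your proof is correct and follows essentially the same approach as the paper: trivialize $L$ near $p$, identify the $s_i$ with local holomorphic functions, and use the two genericity conditions to extend these functions to a holomorphic coordinate system via the inverse function theorem. The paper's version is simply a more compressed statement of the same argument.
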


\begin{proof}
It suffices to simply take a sufficiently small analytic neighborhood $U$ of $p$ such that $L|_U\simeq\OO_U$ and extend the sections $s_0,\dots,s_n\in\Gamma(\OO_U)$ to a coordinate system (note that this is possible thanks to both genericity conditions).
\end{proof}

For this class of maps, the topology of the domain $X$ imposes topological constraints on the fibers of $\pi$ as proved by the following \say{Lefschetz-type} result:

\begin{proposition}{\label{lefschetz}}
Suppose $X$ is a complete local intersection of dimension $m$ and $\pi: X\dashrightarrow \P^n$ is a generic rational map such that $m-n\geqslant 2$ and $H^1(X,\C)=0$. Then every fiber $F$ of $\pi$ is connected and also satisfies $H^1(F,\C) = 0$.
\end{proposition}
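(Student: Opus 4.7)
My plan is to realize every fiber of $\pi$ as a complete intersection of $n$ ample divisors inside $X$ and then apply the Lefschetz hyperplane theorem. Concretely, given $q = [a_0 : \cdots : a_n] \in \P^n$ and picking an index $j$ with $a_j \neq 0$, the fiber $F = \overline{\pi^{-1}(q)}$ coincides with the common zero locus of the $n$ sections $t_i = a_j s_i - a_i s_j \in \Gamma(X, L)$, for $i \neq j$, each of which is a section of the ample line bundle $L$ defining $\pi$.

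To apply Lefschetz, $F$ should be a local complete intersection of pure dimension $m - n$. Near every point of the base locus $B$, Lemma~\ref{base_locus_structure} provides coordinates $z_1, \dots, z_m$ in which $s_0, \dots, s_n$ become the linear coordinates $z_1, \dots, z_{n+1}$; the $t_i$'s then become linear forms with linearly independent differentials, so their common zero locus is a smooth linear subspace of codimension $n$. Thus $F$ is smooth of the expected dimension along $B$ for every $q$. For a generic target $q$, Bertini applied to the base-point-free linear system $\langle s_0, \dots, s_n \rangle$ on $X \setminus B$ guarantees that $F$ is also smooth off of $B$, so globally $F$ is a smooth complete intersection of $n$ ample divisors in the lci variety $X$.

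For such $F$, I would apply the Lefschetz hyperplane theorem iteratively across the divisors $V(t_1), \dots, V(t_n)$ (equivalently, in its formulation for complete intersections in a projective lci ambient), obtaining $H^k(X, \C) \cong H^k(F, \C)$ for all $k < m - n$. Since $m - n \geq 2$ by hypothesis, taking $k = 0$ yields connectedness of $F$ (from connectedness of $X$), while $k = 1$ combined with $H^1(X, \C) = 0$ yields $H^1(F, \C) = 0$.

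The main obstacle is extending the argument uniformly from a generic fiber to \emph{every} fiber. For a special $q \in \P^n$ the sections $t_i$ may fail to form a regular sequence off of $B$, so the fiber could acquire singularities or fail to be of the expected dimension away from $B$, and the smooth Lefschetz theorem does not apply directly. Overcoming this step likely requires a Lefschetz-type theorem valid for singular complete intersections — such as the Goresky--MacPherson theorem for stratified spaces or the Hamm--Lê version for local complete intersections — applied after verifying that the $t_i$'s cut out a pure-codimension-$n$ subscheme globally, a property guaranteed along $B$ by the transversality built into the definition of a generic rational map and which can be propagated via a semi-continuity/specialization argument from the generic fiber treated above.
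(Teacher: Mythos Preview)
Your strategy of exhibiting the fiber as the zero locus of $n$ sections of $L$ and invoking Lefschetz is sound for the generic fiber, and you correctly isolate the obstruction: for an arbitrary $q$ the sections $t_i$ need not form a regular sequence away from $B$, so the intersection may be singular or of the wrong dimension, and iterating the hyperplane theorem step by step (even in its Goresky--MacPherson or Hamm--L\^e incarnations) requires checking at each stage that the intermediate slice remains a local complete intersection of the expected dimension---precisely the point left open. The semi-continuity gesture does not close this, since ampleness of $L$ only bounds $\dim F$ from below.

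The paper circumvents the difficulty by appealing directly to the Fulton--Lazarsfeld connectivity theorem, which asserts that for a local complete intersection $X\subset\P^N$ of dimension $m$ and \emph{any} linear subspace $\Lambda$ of codimension $d$, the pair $(X,X\cap\Lambda)$ is $(m-d)$-connected, with no hypothesis whatsoever on the intersection $X\cap\Lambda$. The reduction goes as follows: choose linear forms on $\P^n$ whose common zero is exactly $q$, pull them back to sections of $L$, and replace $L$ by a power $L^{\otimes d}$ large enough to be very ample; the resulting embedding $X\hookrightarrow\P^N$ turns those pulled-back sections (raised to the $d$-th power) into restrictions of hyperplanes, so $F$ is set-theoretically $X\cap\Lambda$ for a linear $\Lambda$ of codimension $n$. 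Since $m-n\geq 2$, Fulton--Lazarsfeld gives $2$-connectivity of $(X,F)$, and relative Hurewicz plus universal coefficients then yield connectedness of $F$ and $H^1(F,\C)=0$. The gain over your approach is that all regularity questions about the fiber are absorbed into the black box; what you were reaching for with ``a Lefschetz-type theorem valid for singular complete intersections'' is exactly this statement, applied to the full linear section at once rather than one divisor at a time.
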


\begin{proof}
The basis of the proof is to apply the Fulton-Lazarsfeld version of Lefschetz's Hyperplane Sections Theorem \cite[p. 28]{fulton2006connectivity}. To do so we will need to prove that we can embed $X$ in a sufficiently large projective space $\P^N$ such that $F$ is the set-theoretical intersection of $X$ with $n$ hyperplanes. If this were the case, the above theorem in conjunction with the hypothesis $m-n\geqslant 2$ would guarantee us that the pair $(X,F)$ is $2$-connected. By the Relative Hurewicz Theorem this implies that $H_k(X,F) = 0$ for $0\leqslant k \leqslant 2$, and finally $H^1(F,\C)= 0$ by the Universal Coefficients Theorem.

Let $F$ be the fiber of $\pi$ over the point $p\in\P^n$. Let $h_i\in \Gamma\big(\OO_{\P^n}(1)\big)$ with $0\leqslant i\leqslant n$ such that $p$ is the unique solution of the system $f_0=\cdots = f_n = 0$. Replacing the sections $f_i$ by convenient powers of them if necessary, we can assume that the degrees $d_i$ are all equal to an integer $d$. For the same reason, we can assume that $\pi^\ast\OO_{\P^n}(d)$ is very ample. This line bundle defines the desired embedding because $F$ is set-theoretically described as the solutions of the system $\pi^\ast f_0=\cdots = \pi^\ast f_n = 0$.
\end{proof}

\subsection{Stability of pullback foliations}

\begin{definition}{\label{generic_pair}}
Let $X$ be a normal projective variety, $\pi:X\dashrightarrow \P^n$ a dominating rational map, and $\G$ a foliation of codimension one over $\P^n$. We will say that the pair $(\pi,\G)$ is \textit{generic} if
\begin{enumerate}[label =(\alph*)]
\item the rational map $\pi$ is generic,
%\item If $NF\subseteq X$ is the non flat locus of $\pi$, we want $\pi(NF)\cap \Sing_\G = \emptyset$.
\item the regular values of $\pi$ are dense in $\Sing(\G)$,
%\item The multiplicity $\ell:\Fol_{\P^n}\rightarrow\Z_{\geqslant 0}$ is locally constant around $\G\in\Fol_{\P^n}$.
%\item \textcolor{red}{The singularities of $\G$ are stable.}
\item the tangential scheme $\Tang(\pi,\G)$ consists of at worst Morse singularities.
\end{enumerate}
\end{definition}

In the next section we will see that if $X$ is smooth, $\pi$ is a generic morphism and $\G$ is a foliation over $\P^n$ then there exists a nonempty open set $U\subseteq \Aut(\P^n)$ such that the pair $(\pi,\sigma^\ast \G)$ is a generic pair for all $\sigma\in U$ (see Theorem \ref{finitasTangencias}).

\subsection{Proof of Theorem \ref{stability_rational_maps}} \label{ssec:proofT02}
For simplicity we will denote by $\X/S$ the trivial family $X\times S/S$. Due to Lemma \ref{base_locus_structure}, Proposition \ref{split} and Theorem \ref{stabcones} the singularities of the foliation $\F =\pi^\ast\G$ along the base locus $B$ are stable. After replacing $S$ by a smaller open set, we obtain a deformation $\B\subseteq \X$ of the embedding $B\subseteq X$ such that $\B\subseteq\Sing(\dF/S)$. Shrinking $S$ once again, there must exist deformations $\sigma_0,\ldots,\sigma_n\in\Gamma(L\otimes\OO_S)$ of the sections $s_0,\ldots,s_n$ respectively such that $\B = V(\sigma_0,\ldots,\sigma_n)$ (see \cite[Teorema 1.2.2]{perrella2024}). Thus we obtain a rational map $\Pi: \X \dashrightarrow\P^n$ defined as $\Pi(x,s)=[\sigma_0(x,s),\ldots,\sigma_n(x,s)]$.

On the other hand, consider the blow-up $\Bl \X$ of the family $\X$ along $\B$. By pulling back the deformation $\dF/S$ by the $S$-morphism $\Bl\X\rightarrow \X$ we obtain a deformation $\Bl(\dF)/S$ of the pullback of $\G$ by the map $\Bl X\rightarrow\P^n$. This follows from the combination of Lemma \ref{def_blowup}. Replacing $S$ by an open set we can assume that $\Sing(\dG/S)$ is a flat family of subschemes of $\P^n$, and by the local structure of Lemma \ref{base_locus_structure} we conclude that $\Sing(\Bl(\dF)/S)$ is flat in a neighborhood of the exceptional divisor of $\Bl\X$. Recall that the singular locus of $\F$ decomposes as
\[
\Sing\F = \Tang(\pi,\G)\cup \pi^{-1}(\Sing \G).
\]
Following the proof of \cite[Thm 9.2]{Quallbrunn_2015}, the singular locus of $\G$ is equal to the closure of the Kupka singularities of the foliation. The second of the condition for generic pairs $(\pi,\G)$ and hypothesis (d) on the singularities of $\G$ imply that $\pi^{-1}(\Sing \G)$ is in turn the closure of the Kupka-type singularities of $\F$. Since the Morse and Kupka singularities are stable, then $\Sing(\Bl(\dF)/S)$ is flat over $S$ (\cite[Lemma 8.12]{Quallbrunn_2015}). Lemma \ref{dualizar} guarantees that the dual family to $\Bl(\dF)/S$ is a deformation of the pullback foliation of $\G$ by $\Bl X\rightarrow\P^n$.

Note that Hodge Decomposition Theorem and \ref{lefschetz} Proposition imply that the generic fiber of $\Bl X\rightarrow\P^n$ is connected and has no non-zero global $1$-forms. Finally, Theorem \ref{stability_morphisms} implies the existence of an analytic neighborhood $U$ such that $\Bl(\dF)/U$ is the pullback of $\dG/U$ by $\Bl\X_U\rightarrow\P^n$ and hence $\dF/U=\Pi^\ast(\dG/U)$ as claimed.

\begin{remark}
It remains open whether this proof scheme can prove stability of pullback foliations via rational maps $\pi: X\dashrightarrow \P_w$ reaching weighted projective spaces. Most of the ideas work in a similar fashion, but an extension of Theorem \ref{stabcones} for \textit{weighted conical singularities} is needed. Those that are pullbacks of a foliation on $\P_w$ via the projection to the quotient map \smash{$\C^{n+1}\dashrightarrow\P_w$}. Some particular cases of stability behavior are known, such as those presented in \cite{calvo2004irreducible} or certain foliation cones in $TM_{d}(a,b,c;n)$, as illustrated in Example \ref{TM}, which are pullbacks of rational maps $\P^3 \dashrightarrow \P(a,b,c)$ \cite{degree3}. This raises the possibility of a generalization of Theorem \ref{stabcones}.
\end{remark}

\section{Tangential singularities}

Generic pairs $(\pi,\G)$ have only finitely many tangential singularities, as stated in condition (c) of Definition \ref{generic_pair}, and all of them are Morse singularities. We apply Kleiman's transversality theorem to prove that this condition is in fact generic. This is established in Theorem \ref{finitasTangencias}.\\

Let $X$ and $Y$ be two smooth varieties of dimensions $m$ and $n$ respectively. Given a morphism $\pi: X\rightarrow Y$ and an integer $0\leqslant k\leqslant \min\{m,n\}$ the \textit{$k$-th critical set} is defined as
\[
C_k(\pi) = \big\{p\in X\, :\, \rank (d_p\pi) \leqslant k\big\}.
\]
Locally around trivializing open sets, the morphism $d\pi: T_X\rightarrow \pi^\ast T_Y$ is given by a matrix of size $m\times n$. $C_k(\pi)$ is the set where the $(k+1)\times (k+1)$ minors of such matrices vanish, and those equations provides it a scheme structure. In general, if $\pi: X\dashrightarrow Y$ is a rational map with maximal domain $U\subseteq X$, we define its \textit{$k$-th critical set} as the subscheme $C_k(\pi):= C_k(\pi|_U)$.

Assuming it is non-empty, $C_k(\pi)$ has codimension less than or equal to $(m-k)(n-k)$ as occurs with every determinantal variety \cite{arbarello1985geometry}. The number on the right-hand side is referred to as the \textit{expected codimension}. In this context, we provide a bound on the dimension of $C_k(\pi)$, with the proof inspired by \cite[Thm. 3]{cukierman2006singularities} and \cite[Prop. 4.4]{molinuevo2022singular}.

\begin{proposition}{\label{expecteddimension}}
Let $\pi: X\dashrightarrow\P^n$ be a generic rational map. If the critical set $C_k(\pi)$ is nonempty, then
\[
m-(m-k)(n-k) \leqslant \dim C_k(\pi) \leqslant k.
\]
\end{proposition}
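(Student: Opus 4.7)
The plan is to establish the two inequalities separately: the lower bound follows from the general theory of determinantal schemes, while the upper bound is the main content and requires invoking the genericity hypothesis on $\pi$ in a more subtle, Bertini-type way.

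For the lower bound $\dim C_k(\pi) \geq m - (m-k)(n-k)$, I would work on the maximal regular domain $U \subseteq X$ of $\pi$. On $U$ the critical set $C_k(\pi) \cap U$ is locally cut out by the $(k+1)\times(k+1)$ minors of the Jacobian matrix of $\pi$, an $n \times m$ matrix of regular functions. A classical fact on determinantal loci (see, e.g., \cite[Ch.~II]{arbarello1985geometry}) guarantees that any nonempty degeneracy locus of bounded-rank type has codimension at most the expected value $(m-k)(n-k)$. Passing to Zariski closure in $X$ does not decrease dimension, so the asserted lower bound follows.

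For the upper bound $\dim C_k(\pi) \leq k$, the strategy is to show that, under the genericity hypothesis on $\pi$, the actual codimension of $C_k(\pi)$ in $X$ attains the expected value $(m-k)(n-k)$. Combined with the elementary estimate $(m-k)(n-k) \geq m-k$, valid precisely when $k \leq n-1$, this yields $\dim C_k(\pi) \leq m - (m-k)(n-k) \leq k$. Concretely, one views $d\pi$ as a morphism of vector bundles $T_X \to \pi^{*}T_{\P^n}$ on $U$, and aims to verify that for a generic choice of sections $s_0, \dots, s_n$ defining $\pi$ the degeneracy locus of this bundle map has the expected codimension. This is exactly the philosophy adopted in \cite[Thm.~3]{cukierman2006singularities} and \cite[Prop.~4.4]{molinuevo2022singular} for the singular schemes of generic foliations.

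The main obstacle is to convert the genericity built into the definition of a generic rational map, which is a priori only a transversality condition at the base locus $B$, into a global non-degeneracy statement about the Jacobian. The natural way to do this is to realize $\pi$ as a linear projection of the morphism $\phi_L\colon X \to \P\bigl(H^0(L)^{\vee}\bigr)$ induced by the complete linear system $|L|$, and to invoke Kleiman's transversality theorem for the action of $\operatorname{PGL}$ on the Grassmannian of projection centers: for a sufficiently generic center, the pullback of the relevant Schubert-type strata in the Grassmannian meets $\phi_L(X)$ in the expected dimension, which translates into the expected codimension of $C_k(\pi)$. Verifying that the base-locus transversality of the $s_i$ is enough to place $\pi$ in the open subset where this Kleiman-type conclusion applies is the delicate step, but it can be carried out by combining the local normal form provided by Lemma~\ref{base_locus_structure} with a standard incidence-variety argument on the parameter space of $(n+1)$-dimensional subspaces of $H^0(L)$.
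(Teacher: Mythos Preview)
Your treatment of the lower bound is fine and matches the paper.

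Your proposed route to the upper bound, however, has a genuine gap. You aim to show that $C_k(\pi)$ has the \emph{expected} codimension $(m-k)(n-k)$ by a Kleiman-type argument on projection centers, and then deduce $\dim C_k(\pi)=m-(m-k)(n-k)\leq k$. The difficulty you flag as ``the delicate step'' is real and is not discharged: Kleiman transversality yields the expected codimension only for a Zariski-open set of choices of $(s_0,\dots,s_n)$, whereas the hypothesis in the proposition is that $\pi$ be \emph{generic in the sense of the paper}, namely that the $s_i$ meet transversely along the base locus $B$. This is a different open condition, and nothing in your outline shows that it is contained in the Kleiman-open locus. The local normal form of Lemma~\ref{base_locus_structure} controls the geometry only in a neighbourhood of $B$ and says nothing about the behaviour of $d\pi$ away from $B$, which is exactly where the degeneracy loci live; so the combination you invoke does not bridge the gap. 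Note also that you are in fact trying to prove a strictly stronger statement than the proposition when $n-k\geq 2$ (namely that $\dim C_k(\pi)$ equals the expected value rather than merely $\leq k$), which is an additional warning sign.

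The paper proceeds quite differently and in a way that uses the base-locus transversality hypothesis directly. It argues by induction on $n$: for a generic hyperplane $\P^{n-1}\subset\P^n$, Bertini gives smoothness of $X'=\overline{\pi^{-1}(\P^{n-1})}$ away from $B$, and the transversality of the $s_i$ along $B$ is precisely what guarantees smoothness of $X'$ along $B$ as well, so that $\pi|_{X'}:X'\dashrightarrow\P^{n-1}$ is again a generic rational map. One then uses the elementary containment $C_k(\pi)\cap X'\subseteq C_{k-1}(\pi|_{X'})$ together with the inductive bound $\dim C_{k-1}(\pi|_{X'})\leq k-1$ and the fact that a generic $X'$ cuts $C_k(\pi)$ properly to conclude $\dim C_k(\pi)\leq k$. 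This argument never needs the degeneracy loci to have expected codimension, only the weaker bound $\leq k$, and it exploits exactly the hypothesis you were unable to use.
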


\begin{proof}
By what we discussed above, the first inequality holds in general. We are going to argue by induction on $n\geqslant 0$. Since the base case $n=0$ is obvious, we proceed to assume that $n\geqslant 1$. Consider a generic hyperplane $\P^{n-1}\subseteq\P^n$. By Bertini's Theorem, the preimage $X' = \overline{(\pi|_U)^{-1}(\P^{n-1})}$ is smooth outside the base locus $B$. We further claim that the divisor $X'$ is smooth along $B$ as well. Suppose instead that $X'$ is singular at a point $p\in B$, and take an equation $a_0x_0+\cdots+a_nx_n = 0$ determining the hyperplane $\P^{n-1}$. By means of a local trivialization $L|_U\simeq \OO_U$ around $p$ the sections $s_i=\pi^\ast x_i\in\Gamma(L|_U)$ are identified with regular functions on $U$, which we will denote by $s_i$ as well. Since $p\in X'$ is singular we have that $a_0d_ps_0+\cdots+a_nd_ps_n = 0$, and hence $d_ps_0\wedge\cdots\wedge d_ps_n = 0$. But this in turn contradicts the transversality hypothesis of the definition of generic maps.

We now note that the restriction $\pi|_{X'}: X'\dashrightarrow\P^{n-1}$ is a generic rational map determined by the ample line bundle $L|_X$, and by inductive hypothesis each of its critical sets is either empty or has codimension equal to the expected one. Moreover, it is clear that $C_k(\pi)\cap X'\subseteq C_{k-1}(\pi|_{X'})$ (granting that $C_0(\pi)\cap X'\subseteq C_{0}(\pi|_{X'})$). Replacing the divisor $X'$ if necessary, we may assume that $\dim \big(C_k(\pi)\cap X'\big) = \dim C_k(\pi) -1$ (set $\dim \emptyset = -1$). Then $\dim C_k(\pi) \leqslant \dim \big(C_k(\pi)\cap X'\big)+1 \leqslant \dim C_{k-1}\big(\pi|_{X'}\big)+1 \leqslant k$.
\end{proof}

As previously mentioned, the structure of the tangency scheme $\Tang(\pi,\G)$ between a map and a foliation is easy to describe:

\begin{theorem}{\label{finitasTangencias}}
Let $\pi: X\dashrightarrow \P^n$ be a generic rational map and $\G$ a codimension one foliation in $\P^n$. Then there exists a nonempty open set $U\subseteq \operatorname{\Aut(\P^n)}$ such that, for all $\sigma\in U$, the tangency scheme $T(\pi,\sigma^\ast\G)$ consists of at most finite Morse-type singularities of the foliation $\F_\sigma = \pi^\ast\sigma^\ast\G$.
\end{theorem}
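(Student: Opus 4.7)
The plan is to stratify $X$ by the critical sets $C_k^{\ast}(\pi) := C_k(\pi)\setminus C_{k-1}(\pi)$ of the generic rational map $\pi$, and to apply Kleiman's transversality theorem to each stratum. Outside $C_{n-1}(\pi)$ the map $\pi$ is a submersion, so any singularity of $\F_\sigma=\pi^{\ast}\sigma^{\ast}\G$ lying off $C_{n-1}(\pi)$ projects into $\Sing(\sigma^{\ast}\G)$ and is therefore not tangential; hence $\Tang(\pi,\sigma^{\ast}\G)\subseteq\bigsqcup_{k=0}^{n-1} C_k^{\ast}(\pi)$, and by Proposition~\ref{expecteddimension} we have $\dim C_k^{\ast}(\pi)\leqslant k$.

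For each $0\leqslant k\leqslant n-1$ I would introduce the Gauss-type map
$$\phi_k\colon C_k^{\ast}(\pi)\longrightarrow \mathrm{Fl}_k:=\{(y,V):\,y\in\P^n,\ V\in \mathrm{Gr}(k,T_y\P^n)\},\qquad p\longmapsto \bigl(\pi(p),\im d_p\pi\bigr),$$
together with, fixing a defining form $\omega$ of $\G$, the closed subvariety $\mathcal T_k(\G)\subseteq\mathrm{Fl}_k$ of pairs $(y,V)$ with $y\notin\Sing\G$ and $V\subseteq\ker\omega_y$. A direct count from $\dim\mathrm{Fl}_k=n+k(n-k)$ and $\dim\mathcal T_k(\G)=n+k(n-1-k)$ gives $\codim(\mathcal T_k(\G),\mathrm{Fl}_k)=k$. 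Translating $\G$ by $\sigma\in\Aut(\P^n)$ replaces $\mathcal T_k(\G)$ by $\sigma^{-1}\!\cdot\mathcal T_k(\G)$, and the tangential locus of $\sigma^{\ast}\G$ inside $C_k^{\ast}(\pi)$ is the fibre product $\phi_k^{-1}\bigl(\sigma^{-1}\!\cdot\mathcal T_k(\G)\bigr)$. Since $\Aut(\P^n)$ acts transitively on $\mathrm{Fl}_k$, Kleiman's transversality theorem produces a dense open $U_k\subseteq\Aut(\P^n)$ such that for all $\sigma\in U_k$,
$$\dim\,\phi_k^{-1}\!\bigl(\sigma^{-1}\!\cdot\mathcal T_k(\G)\bigr)\ \leqslant\ \dim C_k^{\ast}(\pi)-k\ \leqslant\ 0.$$
Intersecting the $U_k$ for $0\leqslant k\leqslant n-1$ yields an open set on which $\Tang(\pi,\sigma^{\ast}\G)$ is finite.

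To promote finiteness to the Morse property I would repeat the argument on an enriched bundle that records $1$-jets of $\omega$. The Morse condition on $(\pi^{\ast}\sigma^{\ast}\omega)_p$ depends only on $j^{2}_p\pi$ and $j^{1}_{\pi(p)}\sigma^{\ast}\omega$, and writing $\omega=\sum a_i\,dy_i$ locally, the associated Hessian at a tangency decomposes as
$$J(p)\;=\;(d_p\pi)^{T}(Da)(d_p\pi)\;+\;\sum_{i}a_i\,H(\pi_i)(p).$$
For a fixed $2$-jet of $\pi$ at $p$, the vanishing of $\det J(p)$ cuts out a codimension-one subvariety of the space of $1$-jets of $\omega$ at $\pi(p)$, on top of the codimension-$k$ tangency condition. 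Working in the $\Aut(\P^n)$-equivariant bundle $\widetilde{\mathrm{Fl}}_k:=\mathrm{Fl}_k\times_{\P^n}J^{1}\Omega^{1}_{\P^n}$ and enlarging $\phi_k$ to $\widetilde\phi_k\colon C_k^{\ast}(\pi)\to\widetilde{\mathrm{Fl}}_k$ by also recording $j^{2}_p\pi$, the closed subvariety $\widetilde{\mathcal N}_k(\G)\subseteq\widetilde{\mathrm{Fl}}_k$ of non-Morse tangencies should therefore have codimension at least $k+1$. A second application of Kleiman's theorem then yields a dense open $U_k'\subseteq\Aut(\P^n)$ for which
$$\dim\widetilde\phi_k^{-1}\bigl(\sigma^{-1}\!\cdot\widetilde{\mathcal N}_k(\G)\bigr)\ \leqslant\ \dim C_k^{\ast}(\pi)-(k+1)\ \leqslant\ -1,$$
i.e.\ every tangency in $C_k^{\ast}(\pi)$ is Morse. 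The open set $U=\bigcap_{k=0}^{n-1}(U_k\cap U_k')$ then realises the theorem.

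The main obstacle is the codimension bound $\codim(\widetilde{\mathcal N}_k(\G),\widetilde{\mathrm{Fl}}_k)\geqslant k+1$ when $k<n-1$: the first summand of $J(p)$ factors through $d_p\pi$ and thus has rank at most $k<m$, so the non-degeneracy of $J(p)$ must be forced by the $2$-jet of $\pi$ via the second summand. Making the implicit linear algebra rigorous amounts to showing that, as $(a_i)$ varies, the map $a\mapsto J(p)$ lands surjectively on the complement of a proper hypersurface in the space of symmetric $m\times m$ matrices; this is where the genericity hypothesis on $\pi$ (in particular, the transverse vanishing of the sections $s_0,\ldots,s_n$ along the base locus, which prevents degenerate second-order behaviour) should be fed in. An attractive alternative is to argue that for $\sigma$ generic the tangency locus is concentrated in the top stratum $C_{n-1}^{\ast}(\pi)\setminus C_{n-2}(\pi)$, where $d\pi$ has maximal corank one and the Morse-ness analysis is the classical one for fold-type critical points.
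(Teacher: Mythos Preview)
Your finiteness argument is essentially the paper's: both stratify by the rank loci $C_k^\ast(\pi)$, set up a Gauss-type map into a flag variety on which $\Aut(\P^n)$ acts transitively, and invoke Kleiman together with the bound $\dim C_k^\ast(\pi)\leqslant k$ from Proposition~\ref{expecteddimension}. The paper's incidence varieties $I_1,I_2,I_3$ are just a repackaging of your $\phi_k$ and $\mathcal T_k(\G)$, with an extra hyperplane variable that plays no essential role in the count.

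Where you diverge is the Morse step, and your approach there is both more complicated than necessary and genuinely incomplete---as you yourself note, controlling the Hessian via the $2$-jet of $\pi$ on the lower strata is a real problem, and your sketch does not resolve it. The observation you are missing is that Kleiman's theorem in characteristic zero already delivers more than the expected dimension: for general $\sigma$ the fibre product is \emph{generically reduced}, hence reduced once it is $0$-dimensional. Now use that a tangential point $p$ lies, by definition, over a smooth point of $\sigma^\ast\G$; so there is a local holomorphic first integral $g$ near $\pi(p)$, and $\F_\sigma$ is locally defined by $df$ with $f=g\circ\pi$. The tangency scheme near $p$ is then exactly the critical scheme cut out by $\partial_{z_1}f=\cdots=\partial_{z_m}f=0$, and its reducedness at the isolated point $p$ says precisely that
\[
\mu_p(f)\;=\;\dim_\C\,\C\{z_1,\dots,z_m\}/(\partial_{z_1}f,\dots,\partial_{z_m}f)\;=\;1,
\]
which is the Morse condition. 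No jet bundle, no decomposition of the Hessian, no second application of Kleiman is needed. Your ``attractive alternative'' of forcing all tangencies into the top stratum is neither used by the paper nor evidently true: the dimension count permits isolated tangencies in every $C_k^\ast(\pi)$.
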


\begin{proof} To simplify notation let $C_k^0(\pi)$ be the difference $C_k(\pi)\setminus C_{k-1}(\pi)$, $Z$ be the singular locus of $\G$, and $G = G(k+1,n+1)$ be the Grassmannian of $k+1$-dimensional subspaces of $\C^{n+1}$. Consider the diagram 
\[ 
\begin{tikzcd}[row sep =25 pt] & {(\P^n\setminus Z)\times G} \arrow[d] \\ C_k^0(\pi)\times (\P^n)^\vee \arrow[r] & {\P^n\times G\times (\P^n)^\vee} \end{tikzcd} 
\] 
which has as a horizontal morphism $( p,H)\mapsto (\pi(p),\im d_p\pi,H)$ and as a vertical morphism to $(q,L)\mapsto {(q,L,T_{\G}|_{q})}$. These maps are constrained and co-constrained to form a second diagram 
\[
\begin{tikzcd} & I_2 \arrow[d] \\ I_1 \arrow[r] & I_3 
\end{tikzcd} 
\]
where the new schemes involved are 
\begin{align*} 
I_1 &= \Big\{(p,H)\in C^0_k(\pi)\times (\P^n)^\vee \, :\, \im d_p\pi \subseteq H\Big\},\\ I_2 &= \Big\{(q,L)\in(\P^n\setminus Z)\times G \, :\, q\in L \subseteq T_{\G}|_{q}\Big\},\\ I_3 &= \Big\{(q,L,H)\in \P^n\times G\times (\P^n)^\vee\, :\, q\in L \subseteq H\Big\}.
\end{align*}
Note that the pullback of this last diagram is equal to the scheme-theoretical intersection $\Tang(\pi,\G)\cap C_k^0(\pi)$. By Kleiman's Transversality Theorem, there exists an open $U\subseteq\Aut(\P^n)$ such that $\Tang(\pi,\sigma^\ast\G)\cap C_k^0(\pi)$ is generically reduced and of dimension
\[
\dim \Tang(\pi,\sigma^\ast\G)\cap C_k^0(\pi) = \dim I_1 + \dim I_2 - \dim I_3
\]
for all $\sigma\in U$. Thinking of $I_1$, $I_2$ and $I_3$ as fibrations one can check that
\begin{align*}
\dim I_1 &= \dim C^0_k(\pi) + \dim G(1,n-k) = \dim C^0_k(\pi) + (n-k-1),\\
\dim I_2 &= n + \dim G(k,n-1) = n+ k(n-k-1),\\
\dim I_3 &= n + \dim G(k+1,n) + k = n + (k+1)(n-k-1) + k.
\end{align*}
Then we deduce that
\begin{align*}
\dim \Tang(\pi,\G)\cap C_k^0(\pi) & = \dim I_1 + \dim I_2 - \dim I_3 \\
&=\dim C_k^0(\pi) - k \leqslant \dim C_k(\pi) - k \leqslant 0, \\
\end{align*}
where the last inequality follows from Proposition \ref{expecteddimension}. This proves that $\Tang(\pi,\G)$ consists of at most finite points. Moreover, around one such point $p$ the pullback foliation $\F_\sigma$ admits a holomorphic local first integral $f\in\C\{\{z_1,\dots,z_m\}\}$. Since $\F$ is defined by the $1$-form $df$ there, $\Tang(\pi,\G)$ is now determined by the equations $\partial_{z_1} f=\cdots=\partial_{z_m} f=0$. Since the tangential scheme is reduced in $p$ we can say that $\mu(f) = \dim \C\{\{z_1,\dots,z_n\}\}/(\partial_{z_1} f,\dots,\partial_{z_n} f) = 1$, and hence $p$ is a Morse singularity.
\end{proof}

\begin{remark}
The number of isolated singularities of these type of pullback foliations $\F = \pi^\ast\G$ were computed in \cite[Chapter 5]{perrella2024}.
\end{remark}

% Bibliografía
\bibliographystyle{alpha}
\bibliography{bibliografia.bib}

\end{document}